\title{Convergence Analysis of a Quasi-Monte Carlo-Based Deep Learning Algorithm for Solving Partial Differential Equations}
\author{Fengjiang Fu\thanks{Department of Mathematical Sciences, Tsinghua University, Beijing, 100084, People's Republic of China (\email{ffj18@mails.tsinghua.edu.cn}).}
	\and Xiaoqun Wang\thanks{Department of Mathematical Sciences, Tsinghua University, Beijing 100084, People's Republic of China (\email{wangxiaoqun@mail.tsinghua.edu.cn}).}}
\begin{document}
	
	\maketitle
	
	\begin{abstract}
		Deep learning methods have achieved great success in solving partial differential equations~(PDEs), where the loss is often defined as an integral. The accuracy and efficiency of these algorithms depend greatly on the quadrature method. We propose to apply quasi-Monte Carlo~(QMC) methods to the Deep Ritz Method~(DRM) for solving the Neumann problems for the Poisson equation and the static Schr\"{o}dinger equation. For error estimation, we decompose the error of using the deep learning algorithm to solve PDEs into the generalization error, the approximation error and the training error. We establish the upper bounds and prove that QMC-based DRM achieves an asymptotically smaller error bound than DRM. Numerical experiments show that the proposed method converges faster in all cases and the variances of the gradient estimators of randomized QMC-based DRM are much smaller than those of DRM, which illustrates the superiority of QMC in deep learning over MC.
	\end{abstract}
	
	\begin{keywords}
		Deep Ritz Method, quasi-Monte Carlo, Poisson equation, static Schr\"{o}dinger equation, error bound.
	\end{keywords} 
	
	\begin{AMS}
		35J20, 35Q68, 65D30, 65N15, 68T07
	\end{AMS}
	
\section{Introduction}
\label{sec1}
Partial differential equations~(PDEs) are classical models for describing problems arising in physics, finance and engineering. Solving PDEs by deep learning has attracted considerable attention, see~\cite{E1, Han, Lagaris}. Recently, a variety of well-designed deep learning algorithms for solving PDEs have been proposed, such as the physics-informed neural networks~(PINNs)~\cite{Raissi}, the Deep Ritz Method~(DRM)~\cite{E2} and the Deep Galerkin Method~(DGM)~\cite{Sirignano}. The basic idea of these algorithms is to minimize the loss by training the deep neural network. These deep learning algorithms have shown satisfactory efficiency and a wide range of application scenarios. However, researchers are not satisfied with treating a deep learning algorithm as a black box. It is desirable to identify the factors that drive the algorithm in mathematics and improve the algorithm by modifying these factors.

In this paper, we study the effect of the different sampling strategies on DRM. There have been some papers about the error analysis of DRM, see~\cite{Duan, Jiao, Lu2}. Briefly, the essence of DRM is to solve
\begin{equation*}
 \min_{u \in H^{1}\left(\Omega\right)}I(u),
\end{equation*}
where $I(u)$ is in the form of an integral and $H^{1}\left(\Omega\right)$ is a Sobolev space. Obviously, the quadrature method plays an important role in DRM. For high-dimensional PDEs, the algorithm may suffer from the curse of dimensionality. We aim to enhance the accuracy and efficiency of DRM by combining it with a new sampling strategy. To be specific, the accuracy is expressed in terms of the total error, namely the difference between the limit of the algorithm output and the exact solution of the PDE, and the efficiency is measured by the convergence rate and stability of the algorithm. 

Quasi-Monte Carlo~(QMC) methods are efficient quadrature methods, which choose deterministic points, rather than random points, as sample points. QMC methods are widely used in finance~\cite{LEcuyer}, statistics~\cite{Fang}, etc. The Koksma-Hlawka inequality~\cite{Niederreiter} yields that QMC integration has an error bound in the order of $O\left(n^{-1}(\log n)^{d}\right)$ for the integrands with suitable smoothness, where $n$ is the sample size and $d$ is the dimension of the domain of the integrand. It is easy to see that the order of QMC is asymptotically better than that of Monte Carlo~(MC). Although the error bound of QMC depends on the dimension, there have been many results that indicate the superiority of QMC over MC in high dimension~\cite{Sobol2}. Furthermore, Caflisch et al.~\cite{Caflisch} and Wang et al.~\cite{Wang} attribute the superiority of QMC to the effective dimension of the integrand, which is usually much lower than the nominal dimension. We believe that the integrands arising in deep learning outlined in this paper have similar characteristics. 

Recently, the application of QMC methods combined with finite element methods to solve some classes of PDEs with random coefficients has achieved good performance~\cite{Kuo} and some researchers have applied QMC methods to machine learning successfully~\cite{Dick1, Longo, Lyu, Mishra}. We propose to combine QMC methods with DRM (abbreviated as DRM-QMC) for solving the Poisson equation and the static Schr\"{o}dinger equation equipped with the Neumann boundary condition. DRM-QMC can achieve asymptotically smaller error bound than DRM. The proposed algorithm converges faster and is more stable than DRM. To prove these results, we will
\begin{enumerate}
  \item[(i)] formalize DRM-QMC through training the deep neural network by low discrepancy sequences,
  \item[(ii)] decompose the total error into three parts, which correspond to the generalization error, the approximation error and the training error, and then establish their upper bounds to demonstrate their relationship with the mini-batch size, and
  \item[(iii)] implement the algorithms by performing numerical experiments and compare the training processes with respect to different sampling strategies and mini-batch sizes.
\end{enumerate}

This paper is organized as follows. The proposed algorithm is presented in Section~\ref{sec2}. The error bounds of DRM and DRM-QMC are analyzed in Section~\ref{sec3}, and the superiority of DRM-QMC is verified by the numerical experiments in Section~\ref{sec4}. Section~\ref{sec5} concludes the paper.

\section{QMC methods in DRM}
\label{sec2} 
DRM is an algorithm of solving extremum problems in a deep neural network as substitute for computing the numerical solution of some PDE problems, where the losses are based on the variational form of PDEs~\cite{E2}. To study the effect of different sampling strategies on DRM, we first introduce two PDE problems considered in this paper and the DRM based on random sampling strategy. Next, we introduce QMC methods and propose to combine QMC methods with DRM, leading to the DRM-QMC algorithm.

  \subsection{PDE problems and the corresponding variational problems}
  Let $\Omega=[0,1]^{d}$ denote the unit hypercube in $\mathbb{R}^{d}$ and $\partial \Omega$ be its boundary. It should be noted that the results in this paper can be generalized for any bounded rectangular domains. Without loss of generality, we study the following two prototype elliptic PDEs on $\Omega$ equipped with the Neumann boundary condition, which play crucial roles in the study of electrostatics, mechanical engineering and quantum mechanics. One is the Poisson equation
  \begin{equation} \label{eq1}
  \left\{
    \begin{aligned}
      -\Delta u&=f, \  \rm{in} \ \Omega, \\
      \frac{\partial u}{\partial \boldsymbol{n}}&=0, \ \rm{on} \ \partial \Omega,
    \end{aligned}
  \right.
  \end{equation}
  where $\Delta$ is the Laplace operator and $\boldsymbol{n}$ is the unit outward normal vector. Another is the static Schr\"{o}dinger equation
  \begin{equation} \label{eq2}
  \left\{
    \begin{aligned}
      -\Delta u+Vu&=g, \ \rm{in} \ \Omega, \\
      \frac{\partial u}{\partial \boldsymbol{n}}&=0, \ \rm{on} \ \partial \Omega.
    \end{aligned}
  \right.
  \end{equation}
  Assume that $f\in L^{2}(\Omega)$ with $\int_{\Omega} f(x) \mathrm{d}x=0$, $V\in L^{\infty}(\Omega)$ and $0<V_{min}\leqslant V(x)\leqslant V_{max}< \infty$ in $\Omega$, where $V_{min}$ and $V_{max}$ are two constants. In the following sections, we will make more assumptions. 
  
  To start with, we introduce the results in~\cite{Lu2} which state that~(\ref{eq1}) and~(\ref{eq2}) can be turned into variational problems, and the difference between the unique weak solution and any element in the Sobolev space $H^{1}(\Omega)$ can be bounded by the difference between their loss functional values.
  \begin{theorem}\label{thm21}
    \begin{enumerate}
      \item[(i)] There exists a unique weak solution $u_{P}^{*}$ to the Poisson equation~(\ref{eq1}) with $\int\nolimits_{\Omega} u_{P}^{*}\,\mathrm{d}x=0$. Moreover, $u_{P}^{*}$ satisfies
            \begin{equation*}
              u_{P}^{*}=\mathop{\arg\min}_{u \in H^{1}\left(\Omega\right)}\mathcal{L}_{P}(u),
            \end{equation*}
            where
            \begin{small}
            \begin{equation*}
              \mathcal{L}_{P}(u)\coloneqq\int\nolimits_{\Omega}\left(\frac{1}{2}\left\lVert \triangledown u(x)\right\rVert_{\ell_{2}}^{2} -f(x)u(x)\right)\mathrm{d}x+\frac{1}{2}\left( \int\nolimits_{\Omega}u(x)\mathrm{d}x\right)^{2}
            \end{equation*}
            denotes the loss functional with respect to~(\ref{eq1}). For any $u \in H^{1}\left(\Omega\right)$, it holds
            \begin{equation*}
            2\left(\mathcal{L}_{P}(u)-\mathcal{L}_{P}(u_{P}^*)\right)\leqslant \left\lVert u-u_{P}^*\right\rVert_{H^{1}(\Omega)}^{2}\leqslant 2 \max \left\{2C_{Pc}+1,2\right\}\left(\mathcal{L}_{P}(u)-\mathcal{L}_{P}(u_{P}^*)\right),
            \end{equation*}
            \end{small}
            where $C_{Pc}$ is the $Poincar\acute{e}$ constant on $\Omega$.
      \item[(ii)] If $g\in L^{\infty}(\Omega)$, then there exists a unique weak solution $u_{S}^{*}$ to the static Schr\"{o}dinger equation~(\ref{eq2}). Similarly, $u_{S}^{*}$ satisfies
            \begin{equation*}
              u_{S}^*=\mathop{\arg\min}_{u \in H^{1}\left(\Omega\right)}\mathcal{L}_{S}(u),
            \end{equation*}
            where
            \begin{equation*}
            \mathcal{L}_{S}(u)\coloneqq\int\nolimits_{\Omega} \left(\frac{1}{2}\left\lVert \triangledown u(x)\right\rVert_{\ell_{2}}^{2}+\frac{1}{2}V(x)\left\lvert u(x)\right\rvert^{2}-g(x)u(x) \right)\mathrm{d}x
            \end{equation*}
            denotes the loss functional with respect to~(\ref{eq2}). For any $u \in H^{1}\left(\Omega\right)$, it holds
            \begin{equation*}
              \frac{2\left(\mathcal{L}_{S}(u)-\mathcal{L}_{S}(u_{S}^* )\right)}{\max \left\{1,V_{max}\right\}} \leqslant \left\lVert u-u_{S}^* \right\rVert_{H^{1}(\Omega)}^{2}\leqslant \frac{2\left(\mathcal{L}_{S}(u)-\mathcal{L}_{S}(u_{S}^*)\right)}{\min \left\{1,V_{min}\right\}}.
            \end{equation*}
    \end{enumerate}
  \end{theorem}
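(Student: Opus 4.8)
The plan is to regard each loss functional as a quadratic functional on $H^{1}(\Omega)$ of the form $J(u)=\tfrac12 B(u,u)-\ell(u)$, where $B$ is a symmetric bilinear form that is bounded and coercive on $H^{1}(\Omega)$ and $\ell$ a bounded linear functional. The Lax--Milgram theorem then produces a unique $u^{*}$ with $B(u^{*},v)=\ell(v)$ for all $v\in H^{1}(\Omega)$; because $B$ is symmetric positive definite, this stationarity condition characterizes $u^{*}$ as the unique minimizer of $J$, and it is simultaneously the weak form of the associated Euler--Lagrange (PDE) problem. Finally, writing $w=u-u^{*}$ and using $B(u^{*},w)=\ell(w)$, the quadratic expansion collapses to $J(u)-J(u^{*})=\tfrac12 B(w,w)$, and the two-sided estimates drop out of the coercivity and boundedness constants of $B$.

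For part~(i) I would take $B_{P}(u,v)=\int_{\Omega}\nabla u\cdot\nabla v\,\mathrm{d}x+\big(\int_{\Omega}u\,\mathrm{d}x\big)\big(\int_{\Omega}v\,\mathrm{d}x\big)$ and $\ell_{P}(v)=\int_{\Omega}fv\,\mathrm{d}x$, so that $\mathcal{L}_{P}(u)=\tfrac12 B_{P}(u,u)-\ell_{P}(u)$. Boundedness of $B_{P}$ and of $\ell_{P}$ (using $f\in L^{2}(\Omega)$ and $|\Omega|=1$) is immediate from Cauchy--Schwarz. Coercivity is the one place the Poincaré inequality enters: writing $\bar u=\int_{\Omega}u\,\mathrm{d}x$ and combining $\|u-\bar u\|_{L^{2}}^{2}\le C_{Pc}\|\nabla u\|_{L^{2}}^{2}$ with $\|u\|_{L^{2}}^{2}=\|u-\bar u\|_{L^{2}}^{2}+\bar u^{2}$ yields $\|u\|_{H^{1}(\Omega)}^{2}\le (C_{Pc}+1)\big(\|\nabla u\|_{L^{2}}^{2}+\bar u^{2}\big)=(C_{Pc}+1)\,B_{P}(u,u)$. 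Lax--Milgram gives the unique minimizer $u_{P}^{*}$; testing $B_{P}(u_{P}^{*},v)=\ell_{P}(v)$ against $v\equiv 1$ and invoking $\int_{\Omega}f=0$, $|\Omega|=1$ forces $\int_{\Omega}u_{P}^{*}=0$, after which the penalty term drops out of the remaining test equations and $u_{P}^{*}$ is seen to be the unique zero-mean weak solution of~(\ref{eq1}).

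The estimates then follow by elementary algebra. With $w=u-u_{P}^{*}$ one gets $\mathcal{L}_{P}(u)-\mathcal{L}_{P}(u_{P}^{*})=\tfrac12\big(\|\nabla w\|_{L^{2}}^{2}+\bar w^{2}\big)$; the lower bound is just Jensen's inequality $\bar w^{2}\le\|w\|_{L^{2}}^{2}$, while for the upper bound $\|w\|_{L^{2}}^{2}\le 2\|w-\bar w\|_{L^{2}}^{2}+2\bar w^{2}\le 2C_{Pc}\|\nabla w\|_{L^{2}}^{2}+2\bar w^{2}$ gives $\|w\|_{H^{1}(\Omega)}^{2}\le\max\{2C_{Pc}+1,2\}\big(\|\nabla w\|_{L^{2}}^{2}+\bar w^{2}\big)$, which is the claimed constant. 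Part~(ii) is the same scheme with $B_{S}(u,v)=\int_{\Omega}\nabla u\cdot\nabla v\,\mathrm{d}x+\int_{\Omega}Vuv\,\mathrm{d}x$ and $\ell_{S}(v)=\int_{\Omega}gv\,\mathrm{d}x$; here $0<V_{min}\le V\le V_{max}<\infty$ directly yields $\min\{1,V_{min}\}\|u\|_{H^{1}(\Omega)}^{2}\le B_{S}(u,u)\le\max\{1,V_{max}\}\|u\|_{H^{1}(\Omega)}^{2}$, so no Poincaré inequality is needed, and sandwiching $\mathcal{L}_{S}(u)-\mathcal{L}_{S}(u_{S}^{*})=\tfrac12 B_{S}(w,w)$ between these two constants is exactly the stated two-sided bound.

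I expect the only genuine obstacle to be the well-posedness bookkeeping for the Poisson problem: verifying that the added penalty $\tfrac12\big(\int_{\Omega}u\,\mathrm{d}x\big)^{2}$ restores coercivity on the full space $H^{1}(\Omega)$ rather than only on the zero-mean subspace, and then checking that the resulting minimizer actually has zero mean and solves the original Neumann problem. Everything else — the quadratic expansion and the final inequalities — is routine once this bilinear-form framework is in place; one should also recall that the Neumann condition $\partial u/\partial\boldsymbol{n}=0$ is natural for these functionals and hence automatically encoded in the weak formulation, its pointwise recovery being the usual elliptic-regularity-plus-integration-by-parts remark.
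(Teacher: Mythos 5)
Your proposal is correct: the quadratic-functional/Lax--Milgram framework, the coercivity of $B_{P}$ restored by the mean-value penalty via the Poincar\'e--Wirtinger inequality, the zero-mean property obtained by testing against $v\equiv 1$ with $\int_{\Omega}f=0$, and the identity $\mathcal{L}(u)-\mathcal{L}(u^{*})=\tfrac12 B(w,w)$ all check out, and your (slightly lossy) bound $\lVert w\rVert_{L^{2}}^{2}\leqslant 2C_{Pc}\lVert\triangledown w\rVert_{L^{2}}^{2}+2\bar w^{2}$ reproduces exactly the stated constant $2\max\{2C_{Pc}+1,2\}$. Note that the paper itself gives no proof of this theorem --- it imports it verbatim from the cited reference of Lu, Lu and Wang --- so your argument is a correct, self-contained reconstruction along the standard (and presumably identical) lines of that reference; the only cosmetic caveat is that you implicitly take $C_{Pc}$ to be the constant in the squared form $\lVert u-\bar u\rVert_{L^{2}}^{2}\leqslant C_{Pc}\lVert\triangledown u\rVert_{L^{2}}^{2}$, a convention the paper leaves unspecified.
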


  \begin{rem}
    The loss functional $\mathcal{L}_{P}(u)$ can be replaced by a more intuitive one
    \begin{equation}\label{eq3}
      \int\nolimits_{\Omega}\left(\frac{1}{2}\left\lVert \triangledown u(x)\right\rVert_{\ell_{2}}^{2} -f(x)u(x)\right)\mathrm{d}x.
    \end{equation}
    Due to the fact that the unique weak solution of~(\ref{eq1}) satisfies $\int\nolimits_{\Omega} u_{P}^*\,\mathrm{d}x=0$, the loss functional~(\ref{eq3}) is not suitable for training a neural network solution. Lu et al.~\cite{Lu2} proved that these two loss functionals get the same solution.
  \end{rem}

  \subsection{Basic ideas of DRM}
  The DRM is an algorithm to find the optimal approximation of the solution of PDEs in a deep neural network. The basic ideas of the DRM are as follows~\cite{E2}:
  \begin{enumerate}
    \item[(i)] Deep neural network-based approximation of the trial function.
    \item[(ii)] Computation of the loss functional based on MC method.
    \item[(iii)] The optimization algorithm for finding the optimal weight parameters.
  \end{enumerate}

  First, we construct the deep neural network by a prevalent method. Every realization of this deep neural network is a composition of functions. For the feasibility of error analysis, we only consider the deep neural network with fixed depth, width and parameters bound.
  \begin{definition}
    For a given activation function $\sigma(\cdot)$, a deep neural network of depth $L$ is a function class defined as
    \begin{equation*}
      \mathcal{F}_{L,W,B_{\Theta}}\coloneqq \left\{v(\cdot;\theta):v(\cdot;\theta)=T_{L}\circ \sigma \circ T_{L-1} \circ \sigma \circ \dots \circ \sigma \circ T_{1}(\cdot)\right\},
    \end{equation*}
    where $T_{i}(s)=A_{i}s+B_{i} \left(A_{i} \in \mathbb{R}^{d_{i}\times d_{i-1}}, B_{i} \in \mathbb{R}^{d_{i}}, i=1,2,\dots,L\right)$ are affine transformations and $W=\max(d_{1},d_{2},\dots,d_{L})$ denotes the given width of this deep neural network. Moreover, $\theta =\left\{\left(A_{i},B_{i}\right) \right\}_{i=1}^{L}$ are called the weight parameters or weights of $v(x;\theta)$. Let $\theta \in \Theta \coloneqq \left\{\theta \in \mathbb{R}^{D}:\left\lVert \theta\right\rVert_{\ell_{\infty}} \leqslant B_{\Theta} \right\}$, where $D=\sum_{i= 1}^{L}d_{i}+ \sum_{i= 1}^{L}d_{i}\times d_{i-1}$ and $B_{\Theta}$ is a given positive constant.
  \end{definition}

  \begin{rem}\label{remark22}
    Without loss of generality, we consider $d_{L}=1$ in this paper. By vectorizing the matrix and combining multiple column vectors into one column vector, we can write $\theta \in \mathbb{R}^{D}$.
    For ease of notation, we denote the input dimension by $d$ and the deep neural network by $\mathcal{F}$. Furthermore, we choose the swish function
    \begin{equation*}
      \sigma(x)=\frac{x}{1+\exp(-x)}
    \end{equation*}
    as the activation function, which guarantees the smoothness of the functions in $\mathcal{F}$. There is one-to-one correspondence between the function in $\mathcal{F}$ and its weight parameters $\theta$.
  \end{rem}

  In deep learning, we treat everything as a vector. Each layer of a deep learning model performs a simple geometric transformation on the data passing through it, and these transformations are then parameterized by the weights. For example, in the DRM, we see the deep neural network as a weight vector space and use the realizations corresponding to these weight vectors to approximate the solution of PDEs. By converting the problem of solving elliptic PDEs into a variational problem, we can find the optimal weight parameters by a gradient descent method, which corresponds to the optimal approximation of the solution of PDEs. 

  In applications, we must compute the loss functional by a numerical quadrature method. Based on MC method, we define the empirical loss functionals with respect to~(\ref{eq1}) and~(\ref{eq2}) as
  \begin{equation*}
    \mathcal{L}_{n,P}(u)=\frac{1}{n}\sum_{j = 1}^{n}\left(\frac{1}{2}\left\lVert \triangledown_{x} u(X_{j})\right\rVert_{\ell_{2}}^{2} - f(X_{j})u(X_{j})\right) + \frac{1}{2}\left(\frac{1}{n}\sum_{j = 1}^{n}u(X_{j})\right)^{2}
  \end{equation*}
  and
  \begin{equation*}
    \mathcal{L}_{n,S}(u)=\frac{1}{n}\sum_{j = 1}^{n}\left(\frac{1}{2}\left\lVert \triangledown_{x} u(X_{j})\right\rVert_{\ell_{2}}^{2} - g(X_{j})u(X_{j}) + \frac{1}{2}V(X_{j})\left\lvert u(X_{j})\right\rvert^{2} \right),
  \end{equation*}
  where $n$ is called mini-batch size and $\left\{X_{j}\right\}_{j=1}^{n}$ are sample points. For DRM, $\left\{X_{j}\right\}_{j=1}^{n}$ is a set of independent and identically distributed~(i.i.d.) uniform random points on $\Omega$~\cite{E2}.

  To avoid confusion, we emphasize that $\triangledown_{x} v(x;\theta)$ refers to the partial derivative of $v(x;\theta)$ with respect to $x$. Later we will use the notation $\triangledown_{\theta} v(x;\theta)$ as the partial derivative of $v(x;\theta)$ with respect to $\theta$.

  Next, it should be noted that the optimal approximation of the solution of PDEs in DRM is equivalent to the minimizer of the empirical loss functional. Taking the variational problem associated with~(\ref{eq2}) as an example, we define
  \begin{equation*}
    \mu_{S} (X;\theta) \coloneqq \frac{1}{2}\left\lVert \triangledown_{x} v(X;\theta)\right\rVert_{\ell_{2}}^{2} - g(X)v(X;\theta) + \frac{1}{2}V(X)\left\lvert v(X;\theta)\right\rvert^{2}.
  \end{equation*}
  Then we have
  \begin{equation*}
    \mathcal{L}_{S}(v\left(\cdot; \theta \right)) \thickapprox \mathcal{L}_{n,S}(v\left(\cdot; \theta \right) )= \frac{1}{n}\sum_{j = 1}^{n} \mu_{S} (X_{j};\theta).
  \end{equation*}
  Hence, the solution of~(\ref{eq2}) can be approximated by the solution of
  \begin{equation*}
    \min_{v \in \mathcal{F}}\mathcal{L}_{n,S}(v).
  \end{equation*}
  After recognizing that the initial problem can be replaced by an extremum problem where only a finite number of parameters are needed to be determined, we use the mini-batch gradient descent method to find the optimal weight parameters. The realization corresponding to the optimal weight parameters is the approximation to the solution of PDEs.

  \subsection{DRM-QMC}
  It is clear that the key in DRM is the efficiency of the quadrature method. We intend to replace the uniformly distributed sample points in DRM by low discrepancy points~\cite{Chen}, leading to the DRM-QMC. Before that, we introduce QMC briefly. 

  Consider the approximation of an integral 
  \begin{equation*}
    I =\int\nolimits_{[0,1]^{d}}\mu(x) \mathrm{d}x.
  \end{equation*}
  For MC, we first generate i.i.d. random points $X_{1},X_{2},\dots X_{n}$ from the uniform distribution on $\left[0,1\right]^{d}$. By evaluating the function $\mu(x)$ at these random points and averaging the function values, we obtain the MC estimate
  \begin{equation} \label{eq4}
    \hat{I}=\frac{1}{n}\sum_{j = 1}^{n}\mu(X_{j}).
  \end{equation}

  The idea of QMC methods is to choose deterministic points with better uniformity to replace random points and estimate $I$ by the same form of~(\ref{eq4}). The deterministic points in QMC are called low discrepancy points. We introduce several fundamental concepts in QMC.
  \begin{definition}
    An infinite sequence $\left\{\boldsymbol{x}_{1},\boldsymbol{x}_{2},\dots\right\} \subset [0,1]^{d}$ is called a low discrepancy sequence if the star discrepancy of its first $n$ points satisfies
    \begin{equation*}
      D_{n}^{*}(\boldsymbol{x}_{1},\dots,\boldsymbol{x}_{n}) = O(n^{-1}\left(\log n\right)^{d}).
    \end{equation*}
  \end{definition}
  The definition of star discrepancy can be found in~\cite{Niederreiter}. For ease of notation, we denote $D_{n}^{*}$ as the star discrepancy of $\boldsymbol{x}_{1},\dots,\boldsymbol{x}_{n}$ when it is clear which points are used.

  There are various constructions of low discrepancy sequences, such as Halton, Faure, Sobol' and Niederreiter sequences as well as others~\cite{Dick2}. The integration error is bounded by the Koksma-Hlawka inequality~\cite{Niederreiter}. Before we state the Koksma-Hlawka inequality, we introduce the variation in the sense of Hardy and Krause for smooth functions.
  \begin{definition}
    For a function $\mu$ with continuous mixed partial derivatives of up to order $d$ over $\left[0,1\right]^{d}$, we define
    \begin{equation*}
      V^{(k)}(\mu;i_{1},\dots,i_{k})\coloneqq \int_{0}^{1}\dots \int_{0}^{1} \left\lvert \frac{\partial^{k}\mu(\boldsymbol{u}_{\boldsymbol{i}};\boldsymbol{1}_{\boldsymbol{-i}})}{\partial u_{i_{1}}\dots\partial u_{i_{k}}} \right\rvert  \,\mathrm{d}u_{i_{1}}\dots \mathrm{d}u_{i_{k}},
    \end{equation*}
    where $(\boldsymbol{u}_{\boldsymbol{i}};\boldsymbol{1}_{\boldsymbol{-i}})$ refers to the point whose $j$th component is $u_{j}$ if $j \in \boldsymbol{i}=\left\{{i_{1},\dots,i_{k}}\right\}$ and 1 otherwise. Then the variation of $\mu$ in the sense of Hardy and Krause is defined as
    \begin{equation*}
      V_{HK}(\mu)\coloneqq \sum_{k = 1}^{d} \sum_{1\leqslant i_{1}<\dots<i_{k}\leqslant d}V^{(k)}(\mu;i_{1},\dots,i_{k}).
    \end{equation*}
  \end{definition}
  Due to the smoothness of functions in $\mathcal{F}$, only the definition of the Hardy-Krause variation for smooth functions is given here. The definition of the Hardy-Krause variation for more general functions can be found in~\cite{Niederreiter}.
  \begin{proposition}[Koksma-Hlawka inequality]
    If the function $\mu$ has bounded Hardy-Krause variation $V_{HK}(\mu)$, then for any $\boldsymbol{x}_{1},\dots,\boldsymbol{x}_{n} \in \left[0,1\right)^{d}$, there holds
    \begin{equation*}
      \left\lvert \frac{1}{n}\sum_{j = 1}^{n}\mu(\boldsymbol{x}_{j})- \int_{\left[0,1\right]^{d}} \mu(x) \,\mathrm{d}x \right\rvert \leqslant V_{HK}(\mu)D_{n}^{*}(\boldsymbol{x}_{1},\dots,\boldsymbol{x}_{n}).
    \end{equation*}
  \end{proposition}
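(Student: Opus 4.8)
The plan is to prove the bound by a multivariate integration-by-parts identity (Zaremba's identity): it expresses the quadrature error as a sum, over subsets of coordinates, of integrals of a local discrepancy function against mixed partial derivatives of $\mu$; bounding the local discrepancy by $D_n^*$ then reproduces exactly the Hardy--Krause variation. Write $[d]\coloneqq\{1,\dots,d\}$; for $\boldsymbol{u}=\{i_1,\dots,i_k\}\subseteq[d]$ write $\partial_{\boldsymbol{u}}\mu\coloneqq \partial^{k}\mu/(\partial x_{i_1}\cdots\partial x_{i_k})$; and let $(\boldsymbol{t}_{\boldsymbol{u}};\boldsymbol{1}_{-\boldsymbol{u}})$ be the point whose $i$th coordinate is $t_i$ for $i\in\boldsymbol{u}$ and $1$ otherwise, as in the Definition above. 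The first step is to establish, for every $\boldsymbol{x}\in[0,1]^d$,
\begin{equation*}
  \mu(\boldsymbol{x}) = \sum_{\boldsymbol{u}\subseteq[d]} (-1)^{|\boldsymbol{u}|}\int_{[0,1]^{|\boldsymbol{u}|}} \Big(\prod_{i\in\boldsymbol{u}}\mathbf{1}_{\{x_i<t_i\}}\Big)\,(\partial_{\boldsymbol{u}}\mu)(\boldsymbol{t}_{\boldsymbol{u}};\boldsymbol{1}_{-\boldsymbol{u}})\,\mathrm{d}\boldsymbol{t}_{\boldsymbol{u}},
\end{equation*}
where the $\boldsymbol{u}=\emptyset$ summand is read as $\mu(\boldsymbol{1})$. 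This follows by applying the one-dimensional identity $h(x_i)=h(1)-\int_{x_i}^1 h'(t)\,\mathrm{d}t$ successively in each coordinate and expanding the resulting product of two terms per coordinate; a short induction on $d$ makes it rigorous, and the assumed continuity of the mixed partials of order up to $d$ legitimizes the iterated integrals and the use of Fubini's theorem.

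Second, I would evaluate this identity at each sample point $\boldsymbol{x}_j$, average over $j$, and subtract the integral of the same identity over $\Omega$. Since $\int_{[0,1]^d}\prod_{i\in\boldsymbol{u}}\mathbf{1}_{\{x_i<t_i\}}\,\mathrm{d}\boldsymbol{x}=\prod_{i\in\boldsymbol{u}}t_i$, the $\boldsymbol{u}=\emptyset$ terms ($\mu(\boldsymbol{1})$ on both sides) cancel, and the error becomes
\begin{equation*}
  \frac{1}{n}\sum_{j=1}^n\mu(\boldsymbol{x}_j)-\int_{[0,1]^d}\mu(x)\,\mathrm{d}x = \sum_{\emptyset\neq\boldsymbol{u}\subseteq[d]} (-1)^{|\boldsymbol{u}|}\int_{[0,1]^{|\boldsymbol{u}|}} \Delta_n(\boldsymbol{t}_{\boldsymbol{u}};\boldsymbol{1}_{-\boldsymbol{u}})\,(\partial_{\boldsymbol{u}}\mu)(\boldsymbol{t}_{\boldsymbol{u}};\boldsymbol{1}_{-\boldsymbol{u}})\,\mathrm{d}\boldsymbol{t}_{\boldsymbol{u}},
\end{equation*}
where $\Delta_n(\boldsymbol{t})\coloneqq\frac{1}{n}\#\{\,j:\boldsymbol{x}_j\in[0,\boldsymbol{t})\,\}-\prod_{i=1}^d t_i$ is the local discrepancy. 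Here the hypothesis $\boldsymbol{x}_j\in[0,1)^d$ is exactly what is used: setting the coordinates outside $\boldsymbol{u}$ equal to $1$ imposes no constraint when counting, so the empirical term is genuinely $\Delta_n$ evaluated at $(\boldsymbol{t}_{\boldsymbol{u}};\boldsymbol{1}_{-\boldsymbol{u}})$. By the definition of the star discrepancy, $|\Delta_n(\boldsymbol{t})|\leqslant D_n^*$ for all $\boldsymbol{t}\in[0,1]^d$.

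Finally, I would take absolute values in this identity, bound $|\Delta_n|\leqslant D_n^*$ inside each integral, pull $D_n^*$ out of the sum, and observe that the remaining factor
\begin{equation*}
  \sum_{\emptyset\neq\boldsymbol{u}\subseteq[d]}\int_{[0,1]^{|\boldsymbol{u}|}}\big|(\partial_{\boldsymbol{u}}\mu)(\boldsymbol{t}_{\boldsymbol{u}};\boldsymbol{1}_{-\boldsymbol{u}})\big|\,\mathrm{d}\boldsymbol{t}_{\boldsymbol{u}}
\end{equation*}
is precisely $V_{HK}(\mu)$: the sum over nonempty $\boldsymbol{u}=\{i_1<\dots<i_k\}$ is the double sum over $k$ and over increasing tuples in the Definition, and the integrand matches $V^{(k)}(\mu;i_1,\dots,i_k)$ term for term. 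This yields the asserted inequality. I expect the only genuinely delicate part to be the bookkeeping in the first step — keeping the signs $(-1)^{|\boldsymbol{u}|}$, the anchoring at $\boldsymbol{1}$, and the set of active coordinates consistent; in particular, anchoring at $\boldsymbol{1}$ rather than at $\boldsymbol{0}$ is what makes the residual sum coincide with the paper's convention for $V^{(k)}$, so no reflection of the point set is needed. Everything after the identity is routine termwise estimation.
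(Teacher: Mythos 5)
Your proof is correct, but note that the paper does not actually prove this proposition: it is stated as a classical result with a citation to Niederreiter's monograph, so there is no in-paper argument to compare against. What you have written is the standard Zaremba--Hlawka integration-by-parts proof, and it is the right one for this setting: the paper only defines $V_{HK}$ for functions with continuous mixed partials up to order $d$, which is exactly the smoothness your anchored identity $\mu(\boldsymbol{x}) = \sum_{\boldsymbol{u}\subseteq[d]} (-1)^{|\boldsymbol{u}|}\int \bigl(\prod_{i\in\boldsymbol{u}}\mathbf{1}_{\{x_i<t_i\}}\bigr)\,(\partial_{\boldsymbol{u}}\mu)(\boldsymbol{t}_{\boldsymbol{u}};\boldsymbol{1}_{-\boldsymbol{u}})\,\mathrm{d}\boldsymbol{t}_{\boldsymbol{u}}$ needs, and anchoring at $\boldsymbol{1}$ makes the residual sum match the paper's convention for $V^{(k)}(\mu;i_1,\dots,i_k)$ term by term, as you observe. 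Your identification of where the hypothesis $\boldsymbol{x}_j\in[0,1)^d$ enters (so that the coordinates set to $1$ impose no constraint on the count and the empirical term really is the local discrepancy of a half-open anchored box) is the one genuinely easy-to-miss point, and you handle it correctly. The only caveat worth recording is that the proposition as literally stated applies to any function of bounded Hardy--Krause variation, including non-smooth ones, for which the proof requires Riemann--Stieltjes or Abel-summation machinery rather than your Fubini argument; since the paper applies the inequality only to realizations of a smooth network (and products with $C^d$ data), your smooth-case proof covers everything the paper uses.
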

  Moreover, for a vector-valued function $\mu(x)=\left(\mu_{1}(x),\dots,\mu_{m}(x)\right)^{T}$, let
  \begin{equation*}
    V_{HK}(\mu)\coloneqq \sum_{i = 1}^{m}V_{HK}(\mu_{i}),  
  \end{equation*}
  and the Koksma-Hlawka inequality still holds in $\left\lVert \cdot\right\rVert_{\ell_{2}}$-norm.
  
  Sobol' sequences~\cite{Sobol1} are widely used (t,d)-sequences in base 2, and the definition of digital sequence can be found in~\cite{Dick2}. That is, by taking mini-batch size $n=2^{\tau},\tau \in \mathbb{N}^{+}$, better equidistribution may be obtained. Taking the static Schr\"{o}dinger equation~(\ref{eq2}) for example, we present the DRM-QMC in Algorithm~\ref{alg1}. In practice, the stepsize $\alpha_{k}$ in Algorithm~\ref{alg1} comes from \textit{Adam}~\cite{Kingma}, which provides an algorithm for first-order gradient-based optimization of stochastic objective functions.
  \begin{algorithm}[htbp]
    \caption{Quasi-Monte Carlo-based Deep Ritz Method}
    \label{alg1}
    \renewcommand{\algorithmicrequire}{\textbf{Input:}}
    \renewcommand{\algorithmicensure}{\textbf{Output:}}
    
    \begin{algorithmic}[1]
        \REQUIRE Initial parameters $\theta_{0}$, mini-batch size $n=2^{\tau}, \tau \in \mathbb{N}^{+}$ and iteration number $T$  
        \ENSURE Parameters after T iterations $\theta_{T}$    
        
        \STATE  Generate Sobol' sequence, denoted by $\left\{P_{j}\right\}$.
        
        \FOR{$k=0,1,2,\dots,T-1$}
            \STATE set $X_{j,k}=P_{k 2^{\tau}+j}$, for $j=1,\dots,2^{\tau}$,
            \STATE $\theta_{k+1}=\theta_{k}-\alpha_{k} \triangledown_{\theta}\left[2^{-\tau }\sum_{j = 1}^{2^{\tau }}\mu_{S} (X_{j,k};\theta_{k}) \right]$.
        \ENDFOR
        
        \RETURN Outputs
    \end{algorithmic}
  \end{algorithm}

  Now we compare MC and QMC for integration roughly. For MC, by the central limit theorem, the root mean squared error~(RMSE) of MC estimate is $O(n^{-1/2})$. For QMC, the error bound is of order $O(n^{-1}\left(\log n\right)^{d})$ based on the Koksma-Hlawka inequality. For a fixed dimension, QMC asymptotically converges faster than MC. Numerical experiments in various applications demonstrate that QMC usually performs better than MC, see~\cite{Paskov, Sobol3}. To clarify the improvement of QMC on the DRM, we will analyze the error bounds of DRM with different sampling strategies in Section~\ref{sec3} and compare the training processes in Section~\ref{sec4}.
  
\section{Error Analysis} \label{sec3}
  To derive the error bound of using the deep learning algorithms to solve PDEs, we decompose the total error into three parts:
  \begin{enumerate}
    \item[(i)] Generalization error: the error of the approximate solution on predicting unseen data.
    \item[(ii)] Approximation error: the error of approximating the solution of the PDEs using neural networks.
    \item[(iii)] Training error: the error caused by the optimization algorithm used in the training process.
  \end{enumerate}

  We give the mathematical formulations of these errors. For ease of notation, we unify the loss functional and the empirical loss functional of~(\ref{eq1}) and~(\ref{eq2}) as $\mathcal{L}$ and $\mathcal{L}_{n}$. Namely, we omit the subscript $P$ or $S$ when it is clear from the context. We define
  \begin{align*}
    u^{*} & \coloneqq \mathop{\arg\min}_{u \in H^{1}\left(\Omega\right)}\mathcal{L}(u), & u_{\mathcal{F}} & \coloneqq \mathop{\arg\min}_{v \in \mathcal{F}}\mathcal{L}(v), \\
    u_{n} & \coloneqq \mathop{\arg\min}_{v \in \mathcal{F}}\mathcal{L}_{n}(v),          & u^{(k)}           & \coloneqq v(\cdot ; \theta_{k}) \in \mathcal{F},
  \end{align*}
  where $H^{1}\left(\Omega\right)$ is the Sobolev space and $\mathcal{F}$ is the deep neural network. From Theorem~\ref{thm21}, the difference between the output after $k$ iterations $u^{(k)}$ and the unique weak solution $u^{*}$ is bounded by the difference between their loss functional values. Hence, we study the upper bound on
  \begin{equation*}
    \Delta \mathcal{L}_{k} \coloneqq \mathcal{L}(u^{(k)})-\mathcal{L}(u^{*}).
  \end{equation*}

  From the definition, we know that $\Delta \mathcal{L}_{k}$ must be non-negative. Now we decompose it into three parts corresponding to the generalization error, the approximation error and the training error as follows
    \begin{small}
    \begin{equation*}
      \Delta \mathcal{L}_{k} = \mathcal{L}(u^{(k)})-\mathcal{L}(u_{n})+\mathcal{L}(u_{n})-\mathcal{L}_{n}(u_{n})+\mathcal{L}_{n}(u_{n})-\mathcal{L}_{n}(u_{\mathcal{F}})+\mathcal{L}_{n}(u_{\mathcal{F}})-\mathcal{L}(u_{\mathcal{F}})+\mathcal{L}(u_{\mathcal{F}})-\mathcal{L}(u^{*}).
    \end{equation*}
    \end{small}
  The definition of $u_{n}$ leads to $\mathcal{L}_{n}(u_{n})-\mathcal{L}_{n}(u_{\mathcal{F}}) \leqslant 0$. Exchanging summation order, we have
  \begin{eqnarray*}
    \Delta \mathcal{L}_{k} & \leqslant & \mathcal{L}(u^{(k)})-\mathcal{L}(u_{\mathcal{F}})+\mathcal{L}(u_{n})-\mathcal{L}_{n}(u_{n})+\mathcal{L}_{n}(u_{\mathcal{F}})-\mathcal{L}(u_{n})+\mathcal{L}(u_{\mathcal{F}})-\mathcal{L}(u^{*}) \\
    & \leqslant & \underbrace{\mathcal{L}(u^{(k)})-\mathcal{L}(u_{\mathcal{F}})}_{training \ error}+\underbrace{\mathcal{L}(u_{n})-\mathcal{L}_{n}(u_{n})+\mathcal{L}_{n}(u_{\mathcal{F}})-\mathcal{L}(u_{\mathcal{F}})}_{generalization \ error}+\underbrace{\mathcal{L}(u_{\mathcal{F}})-\mathcal{L}(u^{*})}_{approximation \ error},
  \end{eqnarray*}
  where the second inequality follows from
  \begin{eqnarray*}
    \mathcal{L}_{n}(u_{\mathcal{F}})-\mathcal{L}(u_{n})  = \mathcal{L}_{n}(u_{\mathcal{F}})-\mathcal{L}(u_{\mathcal{F}})+\mathcal{L}(u_{\mathcal{F}})-\mathcal{L}(u_{n}) \leqslant  \mathcal{L}_{n}(u_{\mathcal{F}})-\mathcal{L}(u_{\mathcal{F}}).
  \end{eqnarray*}

  We usually decompose the generalization error into $\mathcal{L}(u_{n})-\mathcal{L}_{n}(u_{n})$ and $\mathcal{L}_{n}(u_{\mathcal{F}})-\mathcal{L}(u_{\mathcal{F}})$ in theoretical analysis. In the actual applications, considering the upper bound on $\Delta \mathcal{L}_{k}$ for a fixed number of iterations $k$ is infeasible due to the randomness of the starting point selection and the unpredictable complexity of the PDE problems. Instead, we are interested in the minimal error achieved by the algorithm. Hence, we take the upper bound on $\lim_{k\to\infty}\Delta \mathcal{L}_{k}$ as a criterion for accuracy. Furthermore, since the sample points in MC are random, it is reasonable to consider the mathematical expectation of $\lim_{k\to\infty}\Delta \mathcal{L}_{k}$. For DRM, we define
  \begin{align*}
    \Delta \mathcal{L}_{MCgen1} & \coloneqq \mathbb{E}\left[\left\lvert \mathcal{L}(u_{n})-\mathcal{L}_{n}(u_{n}) \right\rvert \right], & \Delta \mathcal{L}_{MCgen2} & \coloneqq  \mathbb{E}\left[\left\lvert \mathcal{L}(u_{\mathcal{F}})-\mathcal{L}_{n}(u_{\mathcal{F}}) \right\rvert \right],     \\
    \Delta \mathcal{L}_{MCapp}  & \coloneqq \mathcal{L}(u_{\mathcal{F}})-\mathcal{L}(u^{*}),         & \Delta \mathcal{L}_{MCtra}  & \coloneqq \lim_{k\to\infty}  \mathbb{E}\left[\mathcal{L}(u^{(k)})-\mathcal{L}(u_{\mathcal{F}})\right].
  \end{align*}
  For DRM-QMC, we define
  \begin{align*}
    \Delta \mathcal{L}_{QMCgen1} & \coloneqq \left\lvert \mathcal{L}(u_{n})-\mathcal{L}_{n}(u_{n})\right\rvert,       & \Delta \mathcal{L}_{QMCgen2} & \coloneqq \left\lvert \mathcal{L}(u_{\mathcal{F}})-\mathcal{L}_{n}(u_{\mathcal{F}}) \right\rvert,   \\
    \Delta \mathcal{L}_{QMCapp}  & \coloneqq \mathcal{L}(u_{\mathcal{F}})-\mathcal{L}(u^{*}), & \Delta \mathcal{L}_{QMCtra}  & \coloneqq \lim_{k\to\infty}  \mathcal{L}(u^{(k)})-\mathcal{L}(u_{\mathcal{F}}).
  \end{align*}
  From the definitions and formulations, we can extract the meanings of three types of errors. The generalization error measures the error incurred by the quadrature method. The approximation error measures how well can $\mathcal{F}$ approximate $H^{1}(\Omega)$. The training error measures the difference between the limit of algorithm output and the optimal approximate of the solution of PDEs in the deep neural network. In the next three subsections, we will analyze these errors separately in detail.

  \subsection{Generalization error}
  There are many papers on the generalization error, where the Rademacher complexity plays an important role. Here we give the definition.
  \begin{definition}
    For a function class $\mathcal{G}$ and a given set $\left\{X_{i}\right\}_{i=1}^{n}$ of independent random samples, we define the empirical Rademacher complexity of $\mathcal{G}$ as
    \begin{equation*}
      \hat{R}_{n}(\mathcal{G})\coloneqq \mathbb{E}_{\varepsilon}\left[\mathop{\sup}_{\mu \in \mathcal{G}}\left\lvert \frac{1}{n}\sum_{j=1}^{n}\varepsilon_{j}\mu(X_{j})\right\rvert \right],
    \end{equation*}
    where $\left\{\varepsilon_{j}\right\}_{j=1}^{n}$ is an independent uniform Bernoulli sequence with $\varepsilon_{j}\in \left\{\pm 1\right\}$. Then the Rademacher complexity of $\mathcal{G}$ is defined as
    \begin{equation*}
      R_{n}(\mathcal{G})\coloneqq \mathbb{E}_{X}\left[\hat{R}_{n}(\mathcal{G})\right].
    \end{equation*}
  \end{definition}

  The Rademacher complexity represents the richness of a function class by measuring the degree to which a hypothesis set can fit random noise on average. High Rademacher complexity indicates that the function class is rich and complex~\cite{Mohri}. Duan et al.~\cite{Duan} and Jian et al.~\cite{Jiao} give an upper bound on the Rademacher complexity of the deep neural network with activation functions different from $\sigma(x)$ in Remark~\ref{remark22}. Based on their works, we present the results under the settings of this paper in Theorem~\ref{thm31}. Before that, we need the following lemmas.
  \begin{lemma}
    For a function class $\mathcal{G}$, we have
    \begin{equation*}
      \mathbb{E} \left[\mathop{\sup}_{\mu \in \mathcal{G}}\left\lvert\frac{1}{n}\sum_{j = 1}^{n}\mu(X_{j})-E[\mu(X)] \right\rvert \right]\leqslant 2R_{n}(\mathcal{G}),
    \end{equation*}
    where $X\thicksim \rm{Unif}$($\Omega$) and $\left\{X_{j}\right\}_{j=1}^{n}$ are i.i.d. uniform random variables on $\Omega$.
  \end{lemma}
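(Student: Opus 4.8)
The plan is to use the classical \emph{symmetrization} argument. First I would introduce a \emph{ghost sample} $\{X_j'\}_{j=1}^{n}$, i.i.d. uniform on $\Omega$ and independent of $\{X_j\}_{j=1}^{n}$. Since $\mathbb{E}[\mu(X)] = \mathbb{E}_{X'}\big[\tfrac{1}{n}\sum_{j=1}^{n}\mu(X_j')\big]$ for every fixed $\mu$, the left-hand side equals
\begin{equation*}
  \mathbb{E}_{X}\left[\sup_{\mu\in\mathcal{G}}\left\lvert \mathbb{E}_{X'}\left[\frac{1}{n}\sum_{j=1}^{n}\big(\mu(X_j)-\mu(X_j')\big)\right]\right\rvert\right],
\end{equation*}
and then Jensen's inequality (the map $z\mapsto\lvert z\rvert$ is convex, a supremum of convex functions is convex, so the composition is convex; moving $\mathbb{E}_{X'}$ outside the supremum and the absolute value only increases the value) bounds this by
\begin{equation*}
  \mathbb{E}_{X,X'}\left[\sup_{\mu\in\mathcal{G}}\left\lvert \frac{1}{n}\sum_{j=1}^{n}\big(\mu(X_j)-\mu(X_j')\big)\right\rvert\right].
\end{equation*}

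Next I would carry out the symmetrization step. Because the pairs $(X_j,X_j')$ are i.i.d. and each pair is exchangeable, for any fixed sign vector $(\varepsilon_1,\dots,\varepsilon_n)\in\{\pm1\}^{n}$ the family $\big(\varepsilon_j(\mu(X_j)-\mu(X_j'))\big)_{j=1}^{n}$ has the same joint distribution as $\big(\mu(X_j)-\mu(X_j')\big)_{j=1}^{n}$, \emph{simultaneously for all} $\mu\in\mathcal{G}$, since the sign flip acts on the underlying sample points (by swapping $X_j\leftrightarrow X_j'$) rather than on $\mu$. Hence, letting $\{\varepsilon_j\}$ be an independent Rademacher sequence and averaging leaves the expectation unchanged:
\begin{equation*}
  \mathbb{E}_{X,X'}\left[\sup_{\mu\in\mathcal{G}}\left\lvert \frac{1}{n}\sum_{j=1}^{n}\big(\mu(X_j)-\mu(X_j')\big)\right\rvert\right]
  = \mathbb{E}_{X,X',\varepsilon}\left[\sup_{\mu\in\mathcal{G}}\left\lvert \frac{1}{n}\sum_{j=1}^{n}\varepsilon_j\big(\mu(X_j)-\mu(X_j')\big)\right\rvert\right].
\end{equation*}
Finally I would apply the triangle inequality to split the sum into an $X$-part and an $X'$-part, take the supremum and the expectation of each separately, and recognize that each term equals $\mathbb{E}_{X}[\hat R_n(\mathcal{G})] = R_n(\mathcal{G})$ by the definition of the Rademacher complexity (using that $-\varepsilon_j$ is again a Rademacher variable for the $X'$-term). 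This produces the claimed bound $2R_n(\mathcal{G})$.

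The only genuinely delicate point is the justification of the symmetrization identity: one must check that, for each subset of indices to be flipped, the coordinatewise swap $X_j\leftrightarrow X_j'$ is a measure-preserving transformation of the product probability space, and that this holds uniformly over $\mathcal{G}$ because the transformation does not depend on $\mu$. Everything else — rewriting $\mathbb{E}[\mu(X)]$ through the ghost sample, Jensen's inequality, and the triangle inequality — is routine. I would also note in passing that uniformity of the sampling law on $\Omega$ is never used, so the lemma holds verbatim for any common distribution of the $X_j$.
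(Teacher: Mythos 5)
Your argument is correct and is exactly the classical symmetrization proof. The paper does not write out a proof of this lemma at all --- it simply refers the reader to Proposition 4.11 of Wainwright's \emph{High-Dimensional Statistics} --- and the ghost-sample/Jensen/sign-flip/triangle-inequality chain you give is precisely the argument behind that cited result, so there is nothing to reconcile.
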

  For the rigorous proof of this lemma the reader is referred to~\cite[Proposition 4.11]{Wainwright}. This lemma bounds the worst case error for MC integration by combining the law of large numbers with the Rademacher complexity.
  \begin{lemma} \label{lm32}
    For a given deep neural network $\mathcal{F}$, there exist four positive constants $B_{1}$, $B_{2}$, $L_{1}$, $L_{2}$ such that for any $\theta, \overline{\theta} \in \Theta$ and $x \in \Omega$, the realization of the deep neural network $v(x;\theta)$ has the following properties.
    \begin{enumerate}
      \item[(i)] Boundedness:
      \begin{equation*}
      \left\lvert v(x;\theta)\right\rvert   \leqslant B_{1} \quad and \quad \left\lVert \triangledown_{x} v(x;\theta) \right\rVert_{\ell_{2}}\leqslant B_{2}.
      \end{equation*}
      \item[(ii)] Lipschitz continuity:
      \begin{equation*}
        \left\lvert v(x;\theta)-v(x;\overline{\theta}) \right\rvert     \leqslant L_{1}\left\lVert \theta-\overline{\theta}\right\rVert_{\ell_{2}}
      \end{equation*}
      and
      \begin{equation*}
      \left\lvert  \left\lVert\triangledown_{x} v(x;\theta)\right\rVert_{\ell_{2}}^{2}-\left\lVert\triangledown_{x} v(x;\overline{\theta})\right\rVert_{\ell_{2}}^{2} \right\rvert \leqslant L_{2}\left\lVert \theta-\overline{\theta}\right\rVert_{\ell_{2}}.
      \end{equation*}
    \end{enumerate}
  \end{lemma}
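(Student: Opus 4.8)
The plan is to reduce every assertion to two structural facts: the parameter--input set $\Omega\times\Theta=[0,1]^{d}\times\{\theta:\lVert\theta\rVert_{\ell_{\infty}}\le B_{\Theta}\}$ is compact and convex, and the swish activation $\sigma(x)=x/(1+e^{-x})$ belongs to $C^{\infty}(\mathbb{R})$ (with $\sigma'$ and $\sigma''$ even bounded on all of $\mathbb{R}$). Since every realization $v(\cdot;\theta)=T_{L}\circ\sigma\circ T_{L-1}\circ\cdots\circ\sigma\circ T_{1}(\cdot)$ is a finite composition of the coordinatewise $C^{\infty}$ map $\sigma$ with affine maps $T_{i}(s)=A_{i}s+B_{i}$ whose coefficients are entries of $\theta$ (hence depend polynomially, in particular smoothly, on $\theta$), the map $(x,\theta)\mapsto v(x;\theta)$ is $C^{\infty}$ on a neighbourhood of $\Omega\times\Theta$. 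Consequently the four maps $v$, $\nabla_{x}v$, $\nabla_{\theta}v$, and $\nabla_{\theta}\bigl(\lVert\nabla_{x}v\rVert_{\ell_{2}}^{2}\bigr)$ are all continuous on the compact set $\Omega\times\Theta$.

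For part (i) I would set $B_{1}:=\max_{(x,\theta)\in\Omega\times\Theta}\lvert v(x;\theta)\rvert$ and $B_{2}:=\max_{(x,\theta)\in\Omega\times\Theta}\lVert\nabla_{x}v(x;\theta)\rVert_{\ell_{2}}$; both maxima are finite and attained because a continuous function on a compact set is bounded. For part (ii), fix $x\in\Omega$. The map $\theta\mapsto v(x;\theta)$ is $C^{1}$ with $\lVert\nabla_{\theta}v(x;\theta)\rVert_{\ell_{2}}\le L_{1}:=\max_{\Omega\times\Theta}\lVert\nabla_{\theta}v\rVert_{\ell_{2}}$, and since $\Theta$ is convex the segment $\{\bar\theta+t(\theta-\bar\theta):t\in[0,1]\}$ lies in $\Theta$; applying the fundamental theorem of calculus to $t\mapsto v(x;\bar\theta+t(\theta-\bar\theta))$ and Cauchy--Schwarz gives $\lvert v(x;\theta)-v(x;\bar\theta)\rvert\le L_{1}\lVert\theta-\bar\theta\rVert_{\ell_{2}}$. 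The identical argument applied to the $C^{1}$ map $\theta\mapsto\lVert\nabla_{x}v(x;\theta)\rVert_{\ell_{2}}^{2}$, with $L_{2}:=\max_{(x,\theta)\in\Omega\times\Theta}\bigl\lVert\nabla_{\theta}\bigl(\lVert\nabla_{x}v(x;\theta)\rVert_{\ell_{2}}^{2}\bigr)\bigr\rVert_{\ell_{2}}$, yields the second Lipschitz bound. All four constants are manifestly independent of $x$, $\theta$, $\bar\theta$.

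The only genuine work is justifying the joint smoothness claims of the first paragraph, i.e. that $v(x;\theta)$ and $\nabla_{x}v(x;\theta)$ are continuously differentiable in $\theta$; this is a routine but slightly tedious induction over the $L$ layers, showing that each pre-activation, each partial $\partial v/\partial x_{i}$, and their $\theta$-derivatives are obtained from $\theta$-smooth affine maps composed with $\sigma$, $\sigma'$, $\sigma''$, and that along the way $\sigma$ and its derivatives are only ever evaluated on a bounded set determined by $B_{\Theta}$, $W$, $d$. I expect this bookkeeping to be the main obstacle, though it is conceptually straightforward. If explicit constants are wanted (exhibiting their dependence on $B_{\Theta}$, $W$, $d$, $L$, in the spirit of \cite{Duan,Jiao}), the same induction can be run quantitatively, propagating layerwise bounds through $\lVert A_{i}\rVert\lesssim W B_{\Theta}$ together with the finite quantities $\sup_{\mathbb{R}}\lvert\sigma'\rvert$ and $\sup_{\mathbb{R}}\lvert\sigma''\rvert$; I would relegate that computation to an appendix rather than carry it out inline.
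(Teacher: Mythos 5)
Your argument is correct, and it reaches the same conclusion from the same two raw ingredients the paper uses (smoothness of the swish activation and boundedness of the weights), but it is packaged differently. The paper's own proof is essentially a citation: it notes that $\sigma$, $\sigma'$, $\sigma''$ are bounded and Lipschitz on bounded closed regions and then defers the boundedness and Lipschitz continuity of $v(x;\theta)$ and $\triangledown_{x}v(x;\theta)$ to a generalization of \cite[Lemmas 5.9, 5.10, 5.11]{Jiao}, i.e.\ to an explicit layerwise induction carried out elsewhere. You instead give a self-contained soft argument: joint $C^{\infty}$ regularity of $(x,\theta)\mapsto v(x;\theta)$, compactness of $\Omega\times\Theta$ to produce $B_{1}$, $B_{2}$, and convexity of the $\ell_{\infty}$-ball $\Theta$ plus the mean value inequality (fundamental theorem of calculus along the segment, then Cauchy--Schwarz) to produce $L_{1}$, $L_{2}$ as suprema of $\theta$-gradients. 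This buys brevity and rigor-in-principle without tracking constants, and it suffices here because the lemma only asserts existence of the constants (and the later Rademacher bounds in Lemma 3.4 use $B_{1},B_{2},L_{1},L_{2}$ only symbolically); what it gives up is the explicit dependence of the constants on $L$, $W$, $B_{\Theta}$ that the cited layerwise approach would deliver. Note also that you have deferred exactly the same bookkeeping the paper defers: the inductive verification that each layer, its $x$-derivatives, and their $\theta$-derivatives stay in bounded regions is the content of the cited lemmas, so your honest flag that this is the remaining work puts your write-up at the same level of completeness as the paper's.
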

  \begin{proof}
    By routine computation, we obtain that the activation function $\sigma(x)$ and its first-order, second-order derivatives $\sigma'(x),\; \sigma''(x)$ are bounded in bounded closed regions due to their smoothness. Hence, it is easy to see $\sigma(x)$ and $\sigma'(x)$ are Lipschitz continuous in bounded closed regions.
    
    Even though the activation function used in this paper is different from that in ~\cite{Jiao}, we can easily prove that $v(x;\theta)$ and $\triangledown_{x} v(x;\theta)$ are uniformly bounded and Lipschitz continuous with respect to $\theta$ in $\Omega$ by generalizing the results in~\cite[Lemmas 5.9, 5.10, 5.11]{Jiao} under the settings in this paper. Thus the Lipschitz continuity of $\left\lVert\triangledown_{x} v(x;\theta)\right\rVert_{\ell_{2}}^{2}$ follows immediately.
  \end{proof}
  With the uniform boundedness and Lipschitz continuity of $v(x;\theta)$ and $\triangledown_{x} v(x;\theta)$ in $\Omega$, we can prove the following lemma in a similar way to~\cite[Lemma 5.6]{Duan} and~\cite[Theorem 5.13]{Jiao}.
  \begin{lemma}\label{lm33}
    Define the function class
    \begin{equation*}
      \mathcal{F}_{1}\coloneqq\left\{\left\lVert\triangledown_{x} v(\cdot ; \theta)\right\rVert_{\ell_{2}}^{2}:v(\cdot ; \theta) \in \mathcal{F} \right\}.
    \end{equation*}
    We can bound the Rademacher complexities as
    \begin{equation*}
      R_{n}(\mathcal{F}) \leqslant \frac{4}{\sqrt{n}}+\frac{6\sqrt{w}B_{1}}{\sqrt{n}}\sqrt{\log(2L_{1}B_{\Theta}\sqrt{w}\sqrt{n})}
    \end{equation*}
    and
    \begin{equation*}
      R_{n}(\mathcal{F}_{1}) \leqslant \frac{4}{\sqrt{n}}+\frac{6\sqrt{w}B_{2}^{2}}{\sqrt{n}}\sqrt{\log(2L_{2}B_{\Theta}\sqrt{w}\sqrt{n})},
    \end{equation*}
    where $w$ is the total number of nonzero weights.
  \end{lemma}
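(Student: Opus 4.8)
The plan is to reduce both Rademacher complexities to a covering number estimate for the parameter set $\Theta$ and then apply a chaining (Dudley entropy integral) bound. First, I would invoke Lemma~\ref{lm32}: for any $\theta,\overline{\theta}\in\Theta$ the maps $\theta\mapsto v(\cdot;\theta)$ and $\theta\mapsto\left\lVert\triangledown_{x}v(\cdot;\theta)\right\rVert_{\ell_2}^{2}$ are Lipschitz from $(\Theta,\left\lVert\cdot\right\rVert_{\ell_2})$ into $(C(\Omega),\left\lVert\cdot\right\rVert_{\infty})$ with constants $L_{1}$ and $L_{2}$ respectively. Hence, if $\{\theta^{(1)},\dots,\theta^{(N)}\}$ is a $\delta$-net of the relevant coordinate subspace of $\Theta$ in the $\ell_2$ metric, then $\{v(\cdot;\theta^{(i)})\}$ is an $L_{1}\delta$-net of $\mathcal{F}$ in $\left\lVert\cdot\right\rVert_{\infty}$, hence also in every empirical seminorm $\left\lVert\mu\right\rVert_{n}\coloneqq(\frac1n\sum_{j}\mu(X_{j})^{2})^{1/2}$; similarly for $\mathcal{F}_{1}$ with $L_{2}$. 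Since only $w$ entries of $\theta$ may be nonzero and $\left\lVert\theta\right\rVert_{\ell_2}\le\sqrt{w}\,B_{\Theta}$ on that subspace, a standard volumetric estimate gives $N(\delta,\Theta,\left\lVert\cdot\right\rVert_{\ell_2})\le(1+2\sqrt{w}B_{\Theta}/\delta)^{w}$, and therefore
\begin{equation*}
\log N(\epsilon,\mathcal{F},\left\lVert\cdot\right\rVert_{n})\le w\log\!\Big(1+\tfrac{2L_{1}\sqrt{w}B_{\Theta}}{\epsilon}\Big),\qquad \log N(\epsilon,\mathcal{F}_{1},\left\lVert\cdot\right\rVert_{n})\le w\log\!\Big(1+\tfrac{2L_{2}\sqrt{w}B_{\Theta}}{\epsilon}\Big).
\end{equation*}

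Next, I would apply the Dudley entropy integral bound for the empirical Rademacher complexity: for a class $\mathcal{G}$ bounded by $M$ in $\left\lVert\cdot\right\rVert_{n}$,
\begin{equation*}
\hat{R}_{n}(\mathcal{G})\le\inf_{0<\alpha\le M}\Big(4\alpha+\frac{12}{\sqrt{n}}\int_{\alpha}^{M}\sqrt{\log N(\epsilon,\mathcal{G},\left\lVert\cdot\right\rVert_{n})}\,\mathrm{d}\epsilon\Big),
\end{equation*}
where $M=B_{1}$ for $\mathcal{F}$ (by $|v(x;\theta)|\le B_{1}$ from Lemma~\ref{lm32}) and $M=B_{2}^{2}$ for $\mathcal{F}_{1}$ (by $\left\lVert\triangledown_{x}v(x;\theta)\right\rVert_{\ell_2}\le B_{2}$). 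Taking $\alpha=1/\sqrt{n}$ produces the additive $4/\sqrt{n}$ term, and the monotonicity of the integrand together with the covering bound above turns $\int_{\alpha}^{M}$ into a term of order $\tfrac{M}{\sqrt{n}}\sqrt{w\log(1+2L_{i}\sqrt{w}B_{\Theta}\sqrt{n})}$. Absorbing the additive $1$ in the logarithm and the numeric factors into the constant $6$ then gives exactly
\begin{equation*}
R_{n}(\mathcal{F})\le\frac{4}{\sqrt{n}}+\frac{6\sqrt{w}B_{1}}{\sqrt{n}}\sqrt{\log(2L_{1}B_{\Theta}\sqrt{w}\sqrt{n})},\qquad R_{n}(\mathcal{F}_{1})\le\frac{4}{\sqrt{n}}+\frac{6\sqrt{w}B_{2}^{2}}{\sqrt{n}}\sqrt{\log(2L_{2}B_{\Theta}\sqrt{w}\sqrt{n})},
\end{equation*}
the passage from $\hat{R}_{n}$ to $R_{n}$ being immediate because the bound is deterministic, so taking $\mathbb{E}_{X}$ changes nothing.

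The routine ingredients here are the volumetric covering estimate for a Euclidean ball and the chaining bound, both standard and already used in \cite{Duan,Jiao}. The step requiring the most care is matching the precise constants $4$ and $6$ in the statement: this amounts to choosing the truncation level $\alpha=1/\sqrt{n}$ (which produces the $4/\sqrt{n}$ term and turns $2L_{i}B_{\Theta}\sqrt{w}/\alpha$ into $2L_{i}B_{\Theta}\sqrt{w}\sqrt{n}$ inside the logarithm) and then bounding the entropy integral carefully --- using monotonicity of the integrand, $M\le B_{1}$ (resp.\ $M\le B_{2}^{2}$), $\sqrt{a+b}\le\sqrt{a}+\sqrt{b}$, and $\log(1+x)\le 2\log x$ for $x\ge2$ --- so that all $O(1)$ factors collapse into $6$. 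A secondary point to verify is the reduction to $w$ effective parameters: the network architecture prescribes which entries of $\theta\in\mathbb{R}^{D}$ can be nonzero, so the covering takes place in an at-most-$w$-dimensional coordinate subspace of $\mathbb{R}^{D}$ rather than in all of $\mathbb{R}^{D}$.
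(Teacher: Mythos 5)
Your proposal is correct and follows essentially the same route as the paper, which does not write out a proof at all but simply defers to \cite[Lemma 5.6]{Duan} and \cite[Theorem 5.13]{Jiao}; those references proceed exactly as you do, converting the Lipschitz parametrization of Lemma~\ref{lm32} into a covering-number bound on the function classes via a volumetric estimate on the $w$ active coordinates of $\Theta$, and then applying a chaining (Dudley-type) bound with the truncation level $1/\sqrt{n}$. The only point you rightly flag as delicate --- recovering the exact constants $4$ and $6$ --- is a matter of which version of the entropy-integral inequality one quotes, and is handled identically in the cited works.
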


  \begin{theorem} \label{thm31}
  For a given deep neural network, we have the following results on the the generalization error bounds of DRM.
  \begin{enumerate}
      \item [(i)] For the Poisson equation~(\ref{eq1}), let the function $f$ be bounded in $\Omega$. Then there exists a positive constant $\lambda_{1}$ independent of $n$ such that 
      \begin{equation*}
      \Delta \mathcal{L}_{MCgen1} \leqslant \lambda_{1} n^{-\frac{1}{2}}(\log n)^{\frac{1}{2}} \quad and \quad \Delta \mathcal{L}_{MCgen2} \leqslant \lambda_{1} n^{-\frac{1}{2}}(\log n)^{\frac{1}{2}}.
    \end{equation*}
    \item[(ii)] For the static Schr\"{o}dinger equation~(\ref{eq2}), let the function $g$ be bounded in $\Omega$. Then there exists a positive constant $\lambda_{2}$ independent of $n$ such that
    \begin{equation*}
      \Delta \mathcal{L}_{MCgen1} \leqslant \lambda_{2} n^{-\frac{1}{2}}(\log n)^{\frac{1}{2}} \quad and \quad \Delta \mathcal{L}_{MCgen2} \leqslant \lambda_{2} n^{-\frac{1}{2}}(\log n)^{\frac{1}{2}}.
    \end{equation*}
  \end{enumerate}
  \end{theorem}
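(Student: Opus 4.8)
The plan is to reduce both $\Delta\mathcal{L}_{MCgen1}$ and $\Delta\mathcal{L}_{MCgen2}$ to a single uniform estimate. Since $u_n\in\mathcal{F}$ and $u_{\mathcal{F}}\in\mathcal{F}$ by definition, for every realization of the sample $\{X_j\}_{j=1}^{n}$ we have the pointwise bounds $\lvert\mathcal{L}(u_n)-\mathcal{L}_n(u_n)\rvert\le\sup_{v\in\mathcal{F}}\lvert\mathcal{L}(v)-\mathcal{L}_n(v)\rvert$ and $\lvert\mathcal{L}(u_{\mathcal{F}})-\mathcal{L}_n(u_{\mathcal{F}})\rvert\le\sup_{v\in\mathcal{F}}\lvert\mathcal{L}(v)-\mathcal{L}_n(v)\rvert$; taking expectations shows it suffices to estimate $\mathbb{E}\big[\sup_{v\in\mathcal{F}}\lvert\mathcal{L}(v)-\mathcal{L}_n(v)\rvert\big]$, which is why the two bounds are identical. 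The remaining work is to split $\mathcal{L}(v)-\mathcal{L}_n(v)$ into pieces that are, up to a fixed bounded or Lipschitz multiplier, centered empirical averages over function classes whose Rademacher complexity is controlled by Lemma~\ref{lm33}, and then invoke the lemma bounding $\mathbb{E}\big[\sup_{\mu}\lvert\frac1n\sum_j\mu(X_j)-\mathbb{E}\mu(X)\rvert\big]$ by $2R_n(\cdot)$.

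For the Poisson equation, write
\begin{align*}
  \mathcal{L}_{P}(v)-\mathcal{L}_{n,P}(v) &= \tfrac12\Big(\int_{\Omega}\lVert\nabla v\rVert_{\ell_2}^{2}\,\mathrm{d}x-\tfrac1n\sum_{j}\lVert\nabla v(X_j)\rVert_{\ell_2}^{2}\Big)\\
  &\quad-\Big(\int_{\Omega}fv\,\mathrm{d}x-\tfrac1n\sum_{j}f(X_j)v(X_j)\Big)\\
  &\quad+\tfrac12\Big(\big(\textstyle\int_{\Omega}v\,\mathrm{d}x\big)^{2}-\big(\tfrac1n\sum_{j}v(X_j)\big)^{2}\Big).
\end{align*}
The first bracket is handled directly by the symmetrization lemma applied to $\mathcal{F}_1$. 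For the second, since $f$ is bounded, $t\mapsto f(X_j)t$ is $\lVert f\rVert_{L^{\infty}(\Omega)}$-Lipschitz and vanishes at $0$, so the contraction inequality for Rademacher complexities gives $R_n(\{fv:v\in\mathcal{F}\})\le\lVert f\rVert_{L^{\infty}(\Omega)}R_n(\mathcal{F})$, and the symmetrization lemma applies. For the quadratic bracket, factor $a^{2}-b^{2}=(a-b)(a+b)$ with $a=\int_{\Omega}v\,\mathrm{d}x$ and $b=\tfrac1n\sum_{j}v(X_j)$; by Lemma~\ref{lm32}(i) both $\lvert a\rvert,\lvert b\rvert\le B_1$, hence $\lvert a^{2}-b^{2}\rvert\le2B_1\lvert\int_{\Omega}v\,\mathrm{d}x-\tfrac1n\sum_{j}v(X_j)\rvert$, and taking the supremum over $v$ and then expectation reduces this term again to $R_n(\mathcal{F})$. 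Summing the three contributions and inserting the estimates of Lemma~\ref{lm33} (using $\sqrt{\log(c\sqrt n)}\le\sqrt{\log n}$ for $n$ large to absorb the logarithmic factor) yields $\mathbb{E}\big[\sup_{v\in\mathcal{F}}\lvert\mathcal{L}_{P}(v)-\mathcal{L}_{n,P}(v)\rvert\big]\le\lambda_1 n^{-1/2}(\log n)^{1/2}$ for a constant $\lambda_1$ independent of $n$.

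For the static Schr\"odinger equation the empirical loss has no term nonlinear in the empirical mean, so $\mathcal{L}_{S}(v)-\mathcal{L}_{n,S}(v)=\int_{\Omega}\mu_{S}(\cdot;\theta)\,\mathrm{d}x-\tfrac1n\sum_{j}\mu_{S}(X_j;\theta)$ with $\mu_{S}=\tfrac12\lVert\nabla v\rVert_{\ell_2}^{2}+\tfrac12 V\lvert v\rvert^{2}-gv$. By subadditivity of the Rademacher complexity it suffices to bound the complexities of $\mathcal{F}_1$, of $\{V\lvert v\rvert^{2}:v\in\mathcal{F}\}$ and of $\{gv:v\in\mathcal{F}\}$. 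On the range $[-B_1,B_1]$ of $v$ (Lemma~\ref{lm32}(i)) the map $t\mapsto V(X_j)t^{2}$ is Lipschitz with constant $2B_1V_{max}$ and vanishes at $0$, while $t\mapsto g(X_j)t$ is $\lVert g\rVert_{L^{\infty}(\Omega)}$-Lipschitz; the contraction inequality then reduces the last two classes to $R_n(\mathcal{F})$, and Lemma~\ref{lm33} finishes the estimate, giving $\lambda_2 n^{-1/2}(\log n)^{1/2}$.

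The main obstacle is the quadratic term $\tfrac12(\tfrac1n\sum_{j}v(X_j))^{2}$ in the Poisson empirical loss: it is not a centered empirical average over a function class, so the symmetrization lemma does not apply to it directly, and the remedy is exactly the product factorization together with the uniform boundedness supplied by Lemma~\ref{lm32}(i). A secondary technical point is the correct use of the contraction inequality for the composite classes $\{fv\}$, $\{gv\}$, $\{V\lvert v\rvert^{2}\}$: the multipliers depend on the sample point, so the per-coordinate version of the contraction inequality is required, and the absolute-value form may cost an extra universal constant, which is harmless since it is absorbed into $\lambda_1$ and $\lambda_2$.
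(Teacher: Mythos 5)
Your proposal is correct and follows essentially the same route as the paper: reduce both generalization terms to $\mathbb{E}\bigl[\sup_{v\in\mathcal{F}}\lvert\mathcal{L}(v)-\mathcal{L}_{n}(v)\rvert\bigr]$, split the loss into the gradient term, the linear term and the quadratic term, handle the quadratic term via the factorization $a^{2}-b^{2}=(a-b)(a+b)$ together with the uniform bound $B_{1}$, and control everything by the Rademacher complexities of Lemma~\ref{lm33}. The only cosmetic difference is that you invoke the contraction inequality explicitly for the classes $\{fv\}$, $\{gv\}$ and $\{V\lvert v\rvert^{2}\}$, whereas the paper cites the corresponding lemma of Duan et al.\ for the same estimate.
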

  \begin{proof}
    We only prove the upper bound on $\Delta \mathcal{L}_{MCgen1}$ for the Poisson equation~(\ref{eq1}), and the other results can be obtained in a similar way. We have
    \begin{eqnarray*}
      & & \Delta \mathcal{L}_{MCgen1} \\
      & \leqslant & \mathbb{E} \left[\mathop{\sup}_{v \in \mathcal{F}}\left\lvert \mathcal{L}_{P}(v)-\mathcal{L}_{n,P}(v)\right\rvert \right]  \\
      & \leqslant & \mathbb{E}\left[\mathop{\sup}_{v \in \mathcal{F}} \left\lvert \frac{1}{n}\sum_{j = 1}^{n}\frac{1}{2}\left\lVert \triangledown_{x} v(X_{j};\theta)\right\rVert_{\ell_{2}}^{2}- \int\nolimits_{\Omega}\frac{1}{2}\left\lVert \triangledown_{x} v(x;\theta)\right\rVert_{\ell_{2}}^{2}\mathrm{d}x \right\rvert\right]  \\
      & & + \mathbb{E}\left[\mathop{\sup}_{v \in \mathcal{F}}\left\lvert \frac{1}{n}\sum_{j = 1}^{n}f(X_{j})v(X_{j};\theta)-\int\nolimits_{\Omega}f(x)v(x;\theta)\mathrm{d}x\right\rvert \right] \\
      & & + \mathbb{E}\left[\mathop{\sup}_{v \in \mathcal{F}}\left\lvert\frac{1}{2}\left(\frac{1}{n}\sum_{j = 1}^{n}v(X_{j};\theta)\right)^{2}-\frac{1}{2}\left( \int\nolimits_{\Omega}v(x;\theta)\mathrm{d}x\right)^{2} \right\rvert\right] \\
      & \leqslant & R_{n}(\mathcal{F}_{1})+2\sup_{x \in \Omega} \left\lvert f(x)\right\rvert R_{n}(\mathcal{F})+ B_{1}\mathbb{E} \left[\sup_{v\in \mathcal{F}}\left\lvert \frac{1}{n}\sum_{j = 1}^{n}v(X_{j};\theta)-\int\nolimits_{\Omega}v(x;\theta)\mathrm{d}x\right\rvert\right]  \\
      & \leqslant & R_{n}(\mathcal{F}_{1})+2\sup_{x \in \Omega} \left\lvert f(x)\right\rvert R_{n}(\mathcal{F})+2B_{1}R_{n}(\mathcal{F}),
    \end{eqnarray*}
    where $\mathcal{F}_{1}$ is defined as in Lemma~\ref{lm33} and
    \begin{equation*}
      \mathbb{E}\left[\mathop{\sup}_{v \in \mathcal{F}}\left\lvert \frac{1}{n}\sum_{j = 1}^{n}f(X_{j})v(X_{j};\theta)-\int\nolimits_{\Omega}f(x)v(x;\theta)\mathrm{d}x\right\rvert \right]\leqslant 2\sup_{x \in \Omega} \left\lvert f(x)\right\rvert R_{n}(\mathcal{F})
    \end{equation*}
    is a direct result of~\cite[Lemma 5.3]{Duan}. Using Lemmas~\ref{lm32} and~\ref{lm33}, we obtain that there exist a constant $\lambda_{1}$ such that
    \begin{equation*}
      \Delta \mathcal{L}_{MCgen1} \leqslant \lambda_{1} n^{-\frac{1}{2}}(\log n)^{\frac{1}{2}},
    \end{equation*}
    where $\lambda_{1}$ depends on $B_{1}$, $B_{2}$, $L_{1}$, $L_{2}$, $B_{\Theta}$, $w$ and $\sup_{x \in \Omega} \left\lvert f(x)\right\rvert$.
    
    Obviously, $\Delta L_{MCgen2}$ is also bounded by $\mathbb{E} \left[\mathop{\sup}_{v \in \mathcal{F}}\left\lvert \mathcal{L}_{P}(v)-\mathcal{L}_{n,P}(v)\right\rvert \right]$. Then the proof of $(i)$ is completed. The results for the static Schr\"{o}dinger equation~(\ref{eq2}) can be proved in a similar way.
  \end{proof}

  Next, we turn to the generalization error bounds of DRM-QMC. Since every function in $\mathcal{F}$ is smooth, it has bounded Hardy-Krause variation. To investigate the generalization error of DRM-QMC, we prove the following lemma on the uniform boundedness of Hardy-Krause variation for functions $v \in \mathcal{F}$.
  \begin{lemma}\label{lm34}
    There exists a positive constant $C_{1}$ such that for any $v \in \mathcal{F}$,
    \begin{equation*}
      V_{HK}\left(v\right)\leqslant C_{1}.
    \end{equation*}
  \end{lemma}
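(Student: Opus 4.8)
The plan is to bound $V_{HK}(v)$ by reducing it to a uniform bound on the mixed partial derivatives of $v(\cdot;\theta)$ over $\Omega$ and over \emph{all} admissible weights $\theta\in\Theta$. By definition $V_{HK}(v)=\sum_{k=1}^{d}\sum_{1\le i_{1}<\dots<i_{k}\le d}V^{(k)}(v;i_{1},\dots,i_{k})$ is a sum of exactly $2^{d}-1$ terms, and each term satisfies
\[
V^{(k)}(v;i_{1},\dots,i_{k})\ \le\ \sup_{x\in\Omega}\left\lvert \frac{\partial^{k}v(x;\theta)}{\partial x_{i_{1}}\dots\partial x_{i_{k}}}\right\rvert ,
\]
since it is the integral over $[0,1]^{k}$ (a set of unit Lebesgue measure) of the absolute value of that mixed partial, evaluated on the face $\{u_{j}=1,\ j\notin\{i_{1},\dots,i_{k}\}\}$ of the cube. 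Hence it suffices to produce, for every $0$--$1$ multi-index $\alpha$ with $1\le|\alpha|\le d$, a constant $M_{\alpha}$ with $\sup_{x\in\Omega,\,\theta\in\Theta}\lvert\partial_{x}^{\alpha}v(x;\theta)\rvert\le M_{\alpha}$; then $C_{1}=(2^{d}-1)\max_{1\le|\alpha|\le d}M_{\alpha}$ works, and this constant is independent of the individual $v\in\mathcal{F}$ precisely because $\theta$ is confined to $\Theta$.

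To obtain the $M_{\alpha}$ I would exploit smoothness together with compactness. The swish activation $\sigma$ is $C^{\infty}(\mathbb{R})$, the affine maps $T_{i}(s)=A_{i}s+B_{i}$ are polynomial in the pair $(x,\theta)$, and $v(\cdot;\theta)=T_{L}\circ\sigma\circ\cdots\circ\sigma\circ T_{1}$ is a finite composition of such maps; therefore $(x,\theta)\mapsto v(x;\theta)$ is $C^{\infty}$ on $\mathbb{R}^{d}\times\mathbb{R}^{D}$, and in particular each derivative $\partial_{x}^{\alpha}v$ is continuous there. Restricting to the compact set $\Omega\times\Theta=[0,1]^{d}\times\{\theta:\lVert\theta\rVert_{\ell_{\infty}}\le B_{\Theta}\}$, the continuous function $\lvert\partial_{x}^{\alpha}v\rvert$ attains a finite maximum $M_{\alpha}$, which finishes the argument.

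If an explicit dependence of $C_{1}$ on $d$, $L$, $W$, $B_{\Theta}$ is wanted instead of a soft compactness bound, I would argue by induction on the layer index, writing $h^{(0)}(x)=x$, $h^{(i)}(x)=\sigma(T_{i}h^{(i-1)}(x))$ and $v(x;\theta)=T_{L}h^{(L-1)}(x)$. Since $x\in[0,1]^{d}$ and $\lVert\theta\rVert_{\ell_{\infty}}\le B_{\Theta}$, every pre-activation lies in a bounded set (induction, using that $\sigma$ maps bounded sets to bounded sets), so $\sigma$ and its derivatives up to order $d$ are uniformly bounded along the network; then the Fa\`a di Bruno and Leibniz formulas express the order-$\le d$ mixed partials of $h^{(i)}$ through those of $h^{(i-1)}$ and the entries of $A_{i}$, yielding a recursion that terminates in an explicit constant. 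This is exactly the higher-order counterpart of Lemma~\ref{lm32} and of \cite[Lemmas 5.9--5.11]{Jiao}.

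The only real difficulty is the combinatorial bookkeeping of the multivariate chain rule through $L$ compositions when one insists on explicit constants; conceptually there is nothing beyond the fact that a $C^{\infty}$ function on a compact set has bounded derivatives, so via the compactness route the lemma is essentially immediate, and it is the smoothness of $\sigma$ (already noted in Remark~\ref{remark22}) that makes this available.
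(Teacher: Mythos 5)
Your proof is correct and rests on the same two pillars as the paper's proof of Lemma~\ref{lm34} --- the smoothness of the swish activation and the confinement of the weights to the bounded set $\Theta$ --- but you package the uniformity step differently. The paper traces the chain rule explicitly: it computes $\partial v/\partial x_{i_{1}}$ as a product of $\sigma'$ evaluated on the bounded images $T_{i}\circ\sigma\circ\cdots\circ T_{1}(\Omega)$ and of entries of the $A_{i}$, and then argues that every higher-order mixed partial is a sum of such products whose factors (first-order partials of the affine maps and derivatives $\sigma^{(k)}$ on bounded closed regions) are uniformly bounded. You instead observe that $(x,\theta)\mapsto v(x;\theta)$ is a single $C^{\infty}$ function on $\mathbb{R}^{d}\times\mathbb{R}^{D}$ and extract the uniform bounds $M_{\alpha}$ from compactness of $\Omega\times\Theta$; your ``explicit constants'' alternative via Fa\`a di Bruno is precisely the paper's route. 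Your primary version is arguably tighter as a piece of logic: it makes explicit both why the bound is uniform over $v\in\mathcal{F}$ (because $\theta$ ranges over a compact set and the architecture is fixed) and how a sup-norm bound on the mixed partials converts into a bound on each $V^{(k)}$ (integration of the restriction to a face over a set of unit measure, giving the factor $2^{d}-1$), two steps the paper leaves implicit --- at the cost of giving no information about how $C_{1}$ depends on $d$, $L$, $W$ and $B_{\Theta}$, which neither proof actually needs for the error analysis.
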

  \begin{proof}
    By the definition of $\mathcal{F}$, we write
    \begin{equation*}
      v(\cdot;\theta)=T_{L}\circ \sigma \circ T_{L-1} \circ \sigma \circ \cdots \circ \sigma \circ T_{1}(\cdot).
    \end{equation*}
    We consider $d_{1}=d_{2}=\dots=d_{L}=1$ first. By the chain rule, we obtain
    \begin{equation*}
      \frac{\partial v}{\partial x_{i_{1}}}=\frac{d T_{L}(\sigma \circ T_{L-1} \circ \sigma \circ \dots \circ \sigma \circ T_{1})}{d\sigma \circ T_{L-1} \circ \sigma \circ \dots \circ \sigma \circ T_{1}} \frac{d \sigma(T_{L-1} \circ \sigma \circ \dots \circ \sigma \circ T_{1})}{d T_{L-1} \circ \sigma \circ \dots \circ \sigma \circ T_{1}}  \dots \frac{d\sigma(T_{1})}{d T_{1}} \frac{\partial T_{1}}{\partial x_{i_{1}}}.
    \end{equation*}
    Since $\sigma$ is smooth, $\sigma'$ is bounded in the bounded closed regions $T_{i} \circ \sigma \circ \dots \circ \sigma \circ T_{1}(\Omega), \; i=1,2,\dots L-1$. Moreover, $T_{i}'=A_{i},i=2,\dots,L$ and $\partial T_{1}/\partial x_{i_{1}}=A_{1}(1,i_{1})$ are bounded by $B_{\Theta}$. Hence, $\partial v /\partial x_{i_{1}}$ is uniformly bounded for any $v\in\mathcal{F}$.

    Next, we compute partial derivatives of higher order when the width of $\mathcal{F}$ is not necessarily 1. Due to the smoothness of $\sigma(x)$, the $k$-th order derivative of $\sigma(x)$ is bounded in bounded closed regions for any positive integer $k$. Notice that
    \begin{equation*}
      \frac{\partial^{t}T_{i}}{\partial s_{j_{1}}\cdots\partial s_{j_{t}}}=0
    \end{equation*}
    for $t\geqslant 2, 1\leqslant j_{1}<\dots<j_{t}\leqslant d_{i-1}, i=1,2,\dots,L$, and
    \begin{equation*}
    \left\lVert \theta\right\rVert_{\ell_{\infty}}\leqslant B_{\Theta}.
    \end{equation*}
    Hence, $\partial^{k}v(\boldsymbol{x}_{\boldsymbol{i}};\boldsymbol{1}_{\boldsymbol{-i}})/\partial x_{i_{1}}\cdots\partial x_{i_{k}} $ is a sum of a series of derivatives obtained by the chain rule, and its highest order terms are the first order partial derivative of $T_{i},i=1,2,\dots,L$ and $\sigma^{(k)}$, which are all uniformly bounded. Thus $V^{(k)}(v;i_{1},\dots,i_{k}), k=1,2,\dots,d$, are uniformly bounded for any $v \in \mathcal{F}$. The final result follows directly from the definition of the Hardy-Krause variation.
  \end{proof}
  \begin{lemma} \label{coro31}
    \begin{enumerate}
      \item[(i)] If $f \in C^{d}(\Omega)$ in the Poisson equation~(\ref{eq1}), then there exists a positive constant $C_{2}$ such that for any $v \in \mathcal{F}$,
      \begin{equation*}
        V_{HK}\left(\frac{1}{2}\left\lVert \triangledown_{x} v\right\rVert_{\ell_{2}}^{2} -fv\right)\leqslant C_{2}.
        \end{equation*}
      \item[(ii)] If $V \in C^{d}(\Omega)$, $g \in C^{d}(\Omega)$ in the static Schr\"{o}dinger equation~(\ref{eq2}), then there exists a positive constant $C_{3}$ such that for any $v \in \mathcal{F}$,
      \begin{equation*}
        V_{HK}\left(\frac{1}{2}\left\lVert \triangledown_{x} v\right\rVert_{\ell_{2}}^{2} +\frac{1}{2}V\left\lvert v\right\rvert^{2}-gv\right)\leqslant C_{3}.
      \end{equation*}
    \end{enumerate}
  \end{lemma}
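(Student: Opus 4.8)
The plan is to exploit a stronger fact than what Lemma~\ref{lm34} literally states, but which its proof actually establishes: for $v\in\mathcal F$ every mixed partial derivative $\partial^{k}v/\partial x_{i_{1}}\cdots\partial x_{i_{k}}$ is uniformly bounded on $\Omega$, and this holds for \emph{every} order $k$ (not just $k\leqslant d$), because $\sigma$ is $C^{\infty}$ with all of its derivatives bounded on compact sets and the affine maps $T_{i}$ have coefficients bounded by $B_{\Theta}$. First I would record this strengthened statement (re-deriving it in one line from the argument in the proof of Lemma~\ref{lm34} if needed), together with two elementary facts about $V_{HK}$ on $[0,1]^{d}$: subadditivity, $V_{HK}(u+w)\leqslant V_{HK}(u)+V_{HK}(w)$, which is immediate from the triangle inequality applied inside each integral $V^{(k)}$; and the bound $V_{HK}(h)\leqslant\sum_{k=1}^{d}\sum_{1\leqslant i_{1}<\cdots<i_{k}\leqslant d}\sup_{\Omega}\bigl|\partial^{k}h/\partial x_{i_{1}}\cdots\partial x_{i_{k}}\bigr|$ valid for $h\in C^{d}(\Omega)$ (each integrand in $V^{(k)}$ is dominated by its sup over $[0,1]^{k}$). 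Thus it suffices to bound these sup-norms uniformly over $v\in\mathcal F$.

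For part (i), write $\tfrac12\left\lVert\triangledown_{x}v\right\rVert_{\ell_{2}}^{2}-fv=\sum_{i=1}^{d}\tfrac12(\partial_{x_{i}}v)^{2}-fv$ and differentiate by the multivariate Leibniz rule. Then $\partial^{k}_{x_{j_{1}}\cdots x_{j_{k}}}\bigl[(\partial_{x_{i}}v)^{2}\bigr]$ is a finite sum of at most $2^{d}$ products of the form $(\partial^{\boldsymbol a}\partial_{x_{i}}v)(\partial^{\boldsymbol b}\partial_{x_{i}}v)$ with $\boldsymbol a,\boldsymbol b$ complementary sub-multi-indices of $\{j_{1},\dots,j_{k}\}$, so each factor is a mixed partial of $v$ of order at most $d+1$ and is therefore bounded by a constant depending only on $\mathcal F$; hence the whole sum is. Likewise $\partial^{k}_{x_{j_{1}}\cdots x_{j_{k}}}(fv)$ is a finite sum of terms $(\partial^{\boldsymbol a}f)(\partial^{\boldsymbol b}v)$ with $|\boldsymbol a|\leqslant d$, and $f\in C^{d}(\Omega)$ supplies $\sup_{\Omega}|\partial^{\boldsymbol a}f|<\infty$ while $\partial^{\boldsymbol b}v$ is uniformly bounded. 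Summing these estimates over $i$, over all $\{i_{1}<\cdots<i_{k}\}$, and over $k\leqslant d$, and invoking subadditivity, produces a constant $C_{2}$, independent of $v$, with $V_{HK}\bigl(\tfrac12\left\lVert\triangledown_{x}v\right\rVert_{\ell_{2}}^{2}-fv\bigr)\leqslant C_{2}$.

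Part (ii) is the same computation with the additional term $\tfrac12V|v|^{2}=\tfrac12Vv^{2}$: mixed partials of $v^{2}$ up to order $d$ are uniformly bounded (Leibniz rule on $v\cdot v$, each factor a mixed partial of $v$ of order $\leqslant d$), and $V\in C^{d}(\Omega)$ has bounded mixed partials up to order $d$, so $\partial^{k}(Vv^{2})$ is again a finite sum of uniformly bounded products; combining this with the estimate for $gv$ (identical to the $fv$ estimate) and subadditivity of $V_{HK}$ yields the constant $C_{3}$. (An alternative route would be to invoke a product rule for Hardy--Krause variation directly, but the smoothness-based argument above is cleaner in the present setting.)

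The only point requiring care is the bookkeeping on orders of differentiation: one must check that the derivatives that arise never exceed order $d+1$ (for the $\left\lVert\triangledown_{x}v\right\rVert_{\ell_{2}}^{2}$ term) or order $d$ (elsewhere), and — the genuinely load-bearing step — that the construction in the proof of Lemma~\ref{lm34} really does furnish uniform-in-$v$ bounds at those orders. This is why I would make the strengthened form of Lemma~\ref{lm34} explicit before using it; once that is in hand, the rest is a routine application of the product rule and the triangle inequality with no analytic subtlety.
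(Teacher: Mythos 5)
Your proposal is correct and follows essentially the same route as the paper's proof: both split the variation by subadditivity into the $\left\lVert \triangledown_{x} v\right\rVert_{\ell_{2}}^{2}$ part and the $fv$ (resp.\ $V\lvert v\rvert^{2}$, $gv$) part, and both reduce each piece to uniform boundedness of the relevant mixed partials via the chain/Leibniz rule and the mechanism of Lemma~\ref{lm34}. Your version is merely more explicit about the order bookkeeping (in particular that derivatives of order up to $d+1$ of $v$ are needed for the gradient-squared term), which the paper leaves implicit in its phrase ``for the same reason as in the proof of Lemma~\ref{lm34}.''
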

  \begin{proof}
    Decompose the Hardy-Krause variation and we have
    \begin{equation*}
    V_{HK}\left(\frac{1}{2}\left\lVert \triangledown_{x} v\right\rVert_{\ell_{2}}^{2} -fv\right)\leqslant \frac{1}{2}V_{HK}\left(\left\lVert \triangledown_{x} v\right\rVert_{\ell_{2}}^{2}\right)+V_{HK}\left(fv\right).
    \end{equation*}
    Since $\sigma(x)$ is smooth, $\triangledown_{x} v \in C^{d}(\Omega)$. We have that $\partial^{k}\left\lVert \triangledown_{x} v\right\rVert_{\ell_{2}}^{2}(\boldsymbol{x}_{\boldsymbol{i}};\boldsymbol{1}_{\boldsymbol{-i}})/\partial x_{i_{1}}\cdots \partial x_{i_{k}}$ is a sum of a series of derivatives obtained by the chain rule. For the same reason as in the proof of Lemma~\ref{lm34}, we obtain that $V^{(k)}(\left\lVert \triangledown_{x} v\right\rVert_{\ell_{2}}^{2};i_{1},\dots,i_{k}), k=1,2,\dots,d$, are uniformly bounded in $\Omega$ for any $v\in \mathcal{F}$. The uniform boundedness of $V_{HK}(\left\lVert \triangledown_{x} v\right\rVert_{\ell_{2}}^{2})$ follows immediately.

    Next, the uniform boundedness of $V_{HK}(fv)$ for any $v \in \mathcal{F}$ is a straightforward consequence of $f \in C^{d}(\Omega)$ and Lemma~\ref{lm34}. Therefore, the proof of~$(i)$ is completed and the proof of~$(ii)$ is similar.
  \end{proof}

  From Lemmas~\ref{lm34} and~\ref{coro31}, we can derive the generalization error bounds of DRM-QMC by the Koksma-Hlawka inequality.
  \begin{theorem}
    For a given deep neural network, we have the following results on the the generalization error bounds of DRM-QMC.
    \begin{enumerate}
        \item [(i)] For the Poisson equation~(\ref{eq1}), let the function $f\in C^{d}(\Omega)$. Then there exists a positive constant $\mu_{1}$ independent of $n$ such that
        \begin{equation*}
        \Delta \mathcal{L}_{QMCgen1}\leqslant \mu_{1} n^{-1}\left(\log n\right)^{d} \quad and \quad \Delta  \mathcal{L}_{QMCgen2}\leqslant \mu_{1} n^{-1}\left(\log n\right)^{d}.
        \end{equation*}
        \item [(ii)]For the static Schr\"{o}dinger equation~(\ref{eq2}), let the function $g \in C^{d}(\Omega)$. Then there exists a positive constant $\mu_{2}$ independent of $n$ such that
        \begin{equation*}
        \Delta \mathcal{L}_{QMCgen1}\leqslant \mu_{2} n^{-1}\left(\log n\right)^{d} \quad and \quad \Delta \mathcal{L}_{QMCgen2}\leqslant \mu_{2} n^{-1}\left(\log n\right)^{d}.
        \end{equation*}
    \end{enumerate}
  \end{theorem}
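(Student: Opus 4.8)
The plan is to reduce both quantities to a single uniform-over-$\mathcal{F}$ estimate and then invoke the Koksma--Hlawka inequality together with the variation bounds already established. Since $u_{n}\in\mathcal{F}$ and $u_{\mathcal{F}}\in\mathcal{F}$, we have
\begin{equation*}
\Delta\mathcal{L}_{QMCgen1}\leqslant\sup_{v\in\mathcal{F}}\left\lvert\mathcal{L}(v)-\mathcal{L}_{n}(v)\right\rvert\quad\text{and}\quad\Delta\mathcal{L}_{QMCgen2}\leqslant\sup_{v\in\mathcal{F}}\left\lvert\mathcal{L}(v)-\mathcal{L}_{n}(v)\right\rvert,
\end{equation*}
so it suffices to bound the right-hand side. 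Here $\mathcal{L}_{n}$ is evaluated at the first $n$ points of the low discrepancy sequence (or, at iteration $k$ in Algorithm~\ref{alg1}, at the consecutive block $P_{k2^{\tau}+1},\dots,P_{(k+1)2^{\tau}}$ with $n=2^{\tau}$, which is a digital net because Sobol' sequences are $(t,d)$-sequences in base $2$); in either case the relevant star discrepancy satisfies $D_{n}^{*}=O(n^{-1}(\log n)^{d})$.

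For the Poisson equation, I would split $\mathcal{L}_{P}(v)-\mathcal{L}_{n,P}(v)$ into two terms. The first,
\begin{equation*}
\int_{\Omega}\left(\tfrac12\left\lVert\triangledown_{x} v(x;\theta)\right\rVert_{\ell_{2}}^{2}-f(x)v(x;\theta)\right)\mathrm{d}x-\frac1n\sum_{j=1}^{n}\left(\tfrac12\left\lVert\triangledown_{x} v(X_{j};\theta)\right\rVert_{\ell_{2}}^{2}-f(X_{j})v(X_{j};\theta)\right),
\end{equation*}
is precisely the QMC integration error for the integrand of Lemma~\ref{coro31}(i); since $f\in C^{d}(\Omega)$, that lemma gives $V_{HK}\big(\tfrac12\|\triangledown_{x} v\|_{\ell_{2}}^{2}-fv\big)\leqslant C_{2}$ uniformly in $v\in\mathcal{F}$, so the Koksma--Hlawka inequality bounds this term by $C_{2}D_{n}^{*}$. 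The second term is $\tfrac12\big[(\int_{\Omega}v\,\mathrm{d}x)^{2}-(\tfrac1n\sum_{j}v(X_{j}))^{2}\big]$, which is not itself an integration error; here I would use the factorization $a^{2}-b^{2}=(a+b)(a-b)$ with $a=\int_{\Omega}v$ and $b=\tfrac1n\sum_{j}v(X_{j})$. By the boundedness $\lvert v\rvert\leqslant B_{1}$ from Lemma~\ref{lm32} we have $\lvert a\rvert,\lvert b\rvert\leqslant B_{1}$, and by Lemma~\ref{lm34} together with Koksma--Hlawka, $\lvert a-b\rvert\leqslant V_{HK}(v)D_{n}^{*}\leqslant C_{1}D_{n}^{*}$; hence this term is bounded by $B_{1}C_{1}D_{n}^{*}$. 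Combining, $\sup_{v\in\mathcal{F}}\lvert\mathcal{L}_{P}(v)-\mathcal{L}_{n,P}(v)\rvert\leqslant(C_{2}+B_{1}C_{1})D_{n}^{*}$, and with $D_{n}^{*}=O(n^{-1}(\log n)^{d})$ this produces a constant $\mu_{1}$ depending only on $C_{1},C_{2},B_{1}$ and the implied constant in the discrepancy bound (hence independent of $n$), as claimed.

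For the static Schr\"{o}dinger equation the argument is identical but simpler, because $\mathcal{L}_{S}(v)-\mathcal{L}_{n,S}(v)$ is a genuine integration error for the integrand $\tfrac12\|\triangledown_{x} v\|_{\ell_{2}}^{2}+\tfrac12 V\lvert v\rvert^{2}-gv$; with $V,g\in C^{d}(\Omega)$, Lemma~\ref{coro31}(ii) supplies the uniform bound $C_{3}$ on its Hardy--Krause variation, and Koksma--Hlawka gives $\sup_{v\in\mathcal{F}}\lvert\mathcal{L}_{S}(v)-\mathcal{L}_{n,S}(v)\rvert\leqslant C_{3}D_{n}^{*}=O(n^{-1}(\log n)^{d})$, yielding $\mu_{2}$. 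I expect the main obstacle to be the quadratic-in-integration term $\tfrac12(\int_{\Omega}v)^{2}$ in the Poisson loss, which is not covered directly by the Koksma--Hlawka inequality and forces the $(a+b)(a-b)$ detour and the use of the uniform pointwise bound $B_{1}$; everything else is a direct application of Lemmas~\ref{lm32}, \ref{lm34} and~\ref{coro31}. A secondary technical point worth stating carefully is why the per-iteration sample blocks inherit the low discrepancy property, which follows from the net structure of Sobol' sequences.
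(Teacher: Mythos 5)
Your proposal is correct and follows essentially the same route as the paper: bound both quantities by $\sup_{v\in\mathcal{F}}\left\lvert\mathcal{L}(v)-\mathcal{L}_{n}(v)\right\rvert$, split off the quadratic term $\tfrac12(\int_{\Omega}v\,\mathrm{d}x)^{2}$ via the difference-of-squares factorization with the uniform bound $B_{1}$, and apply the Koksma--Hlawka inequality with the variation bounds of Lemmas~\ref{lm34} and~\ref{coro31} to arrive at the same constant $(C_{2}+B_{1}C_{1})D_{n}^{*}$ for the Poisson case and $C_{3}D_{n}^{*}$ for the Schr\"{o}dinger case. Your closing remark on why the per-iteration blocks of a Sobol' sequence retain the low discrepancy property is a point the paper leaves implicit, and is worth stating.
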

  \begin{proof}
    For the Poisson equation~(\ref{eq1}), we have
    \begin{small}
    \begin{eqnarray*}
      & & \Delta \mathcal{L}_{QMCgen1} \\
      & \leqslant & \sup_{v \in \mathcal{F}}\left\lvert \mathcal{L}_{P}(v)-\mathcal{L}_{n,P}(v)\right\rvert \\
      & \leqslant & \sup_{v \in \mathcal{F}}\left\lvert \int\nolimits_{\Omega} \left(\frac{\left\lVert \triangledown_{x} v(x;\theta)\right\rVert_{\ell_{2}}^{2}}{2} -f(x)v(x;\theta)\right)\mathrm{d}x - \frac{1}{n}\sum_{j = 1}^{n}\left(\frac{\left\lVert \triangledown_{x} v(X_{j};\theta)\right\rVert_{\ell_{2}}^{2}}{2} - f(X_{j})v(X_{j};\theta)\right)\right\rvert \\
      & & +\sup_{v \in \mathcal{F}}\left\lVert v\right\rVert_{L^{\infty}(\Omega)} \sup_{v\in \mathcal{F}}\left\lvert \int\nolimits_{\Omega}v(x;\theta)\mathrm{d}x - \frac{1}{n}\sum_{j = 1}^{n}v(X_{j};\theta)\right\rvert,
    \end{eqnarray*}
    \end{small}
    where $X_{1},\dots,X_{n}$ are low discrepancy points. Using the Koksma-Hlawka inequality, which is ensured by Lemmas~\ref{lm34} and~\ref{coro31}, we obtain an upper bound on $\Delta \mathcal{L}_{QMCgen1}$, i.e.,
    \begin{equation*}
      \Delta \mathcal{L}_{QMCgen1}\leqslant \left(C_{2}+B_{1}C_{1}\right) D_{n}^{*}.
    \end{equation*}
    Hence, there exists a positive constant $\mu_{1}$ such that
    \begin{equation*}
      \Delta \mathcal{L}_{QMCgen1}\leqslant \mu_{1} n^{-1}\left(\log n\right)^{d},
    \end{equation*}
    where $\mu_{1}$ depends on $B_{1}$, $C_{1}$, $C_{2}$ and the construction of the used low discrepancy sequence. 
    
    For $\Delta \mathcal{L}_{QMCgen2}$, we notice that it is also bounded by $\sup_{v \in \mathcal{F}}\left\lvert \mathcal{L}_{P}(v)-L_{n,P}(v)\right\rvert$, so $\Delta \mathcal{L}_{QMCgen1}$ and $\Delta \mathcal{L}_{QMCgen2}$ have the same upper bound.
    
    The results for the case of the static Schr\"{o}dinger equation~(\ref{eq2}) can be proved in a similar way.
  \end{proof}
  
  \subsection{Approximation error}
  In this subsection, we present the approximation error bound. From Theorem~\ref{thm21}, we can derive the following error bounds naturally.

  For the Poisson equation~(\ref{eq1}), we have
  \begin{equation*}
    \mathcal{L}(u_{\mathcal{F}})-\mathcal{L}(u_{P}^{*})\leqslant \frac{1}{2} \inf_{v \in \mathcal{F}}\left\lVert v-u_{P}^{*}\right\rVert_{H^{1}(\Omega)}^{2}.
  \end{equation*}

  For the static Schr\"{o}dinger equation~(\ref{eq2}), we have
  \begin{equation*}
    \mathcal{L}(u_{\mathcal{F}})-\mathcal{L}(u_{S}^{*})\leqslant \frac{\max\left\{1,V_{max}\right\}}{2} \inf_{v \in \mathcal{F}}\left\lVert v-u_{S}^{*}\right\rVert_{H^{1}(\Omega)}^{2}.
  \end{equation*}
  To give the approximation error bound, we need to find an upper bound on
  \begin{equation}
    \nonumber
    \inf_{v \in \mathcal{F}}\left\lVert v-u\right\rVert_{H^{1}(\Omega)}^{2}
  \end{equation}
  for a given function $u \in H^{1}\left(\Omega\right)$. With reference to~\cite[Proposition 4.8]{Guhring}, we have the following theorem.
  \begin{theorem} \label{thm33}
    For $s\geqslant 2$, there exist constants $L,C,\vartheta ,\widetilde{\varepsilon}$ depending on $d,s,$ such that for any $u \in H^{s}(\Omega)$ with $\left\lVert u\right\rVert_{H^{s}\left(\Omega\right)} \leqslant 1$ and every $\varepsilon \in (0,\widetilde{\varepsilon})$, there is a realization $v$ in the deep neural network $\mathcal{F}$ with depth at most $L$ and at most $C\varepsilon^{-d/(s-1)}$ nonzero weights such that
    \begin{equation*}
      \left\lVert u-v\right\rVert_{H^{1}(\Omega)} \leqslant \varepsilon.
    \end{equation*}
    Moreover, the absolute values of weights are bounded by $C\varepsilon^{-\vartheta}$.
  \end{theorem}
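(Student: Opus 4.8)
The plan is to reduce Theorem~\ref{thm33} to the standard Sobolev approximation result for ReLU networks, \cite[Proposition 4.8]{Guhring}, and then transfer the resulting construction to the swish activation $\sigma$ that defines $\mathcal{F}$. First I would invoke \cite[Proposition 4.8]{Guhring} with accuracy $\varepsilon/2$: since $u\in H^{s}(\Omega)$ with $\lVert u\rVert_{H^{s}(\Omega)}\leqslant1$, it produces a ReLU network $\widetilde v$ of depth at most $\widetilde L=\widetilde L(d,s)$, with at most $\widetilde C\,\varepsilon^{-d/(s-1)}$ nonzero weights, all bounded in absolute value by $\widetilde C\,\varepsilon^{-\widetilde\vartheta}$, such that $\lVert u-\widetilde v\rVert_{H^{1}(\Omega)}\leqslant\varepsilon/2$; the threshold $\widetilde\varepsilon$ and the fact that the constants depend only on $d,s$ are inherited from that proposition, the normalization $\lVert u\rVert_{H^{s}(\Omega)}\leqslant1$ playing the usual role. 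Since $\widetilde v$ is piecewise affine with breakpoints on a finite union of lower-dimensional sets, $\widetilde v\in W^{1,\infty}(\Omega)\subset H^{1}(\Omega)$ and its weak gradient equals the classical one a.e., so the $H^{1}$-distance is measured against that a.e.\ gradient.

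Second, and this is the core step, I would replace each ReLU unit by a small block of swish units via the scaling $\beta^{-1}\sigma(\beta t)=t/(1+e^{-\beta t})$, which converges to $\max(t,0)=\mathrm{ReLU}(t)$ uniformly on compacts with rate $O(\beta^{-1})$ as $\beta\to\infty$, and whose derivative $\sigma'(\beta t)$ is globally bounded (immediate for the swish, since $\sigma'$ has finite limits at $\pm\infty$) and converges to $\mathbf 1_{\{t>0\}}$ for $t\neq0$. Carrying out this substitution layer by layer yields a swish realization $v=v_{\beta}$ whose depth is a fixed multiple of $\widetilde L$, whose number of nonzero weights is a fixed multiple of that of $\widetilde v$ (hence still $O(\varepsilon^{-d/(s-1)})$), and whose weights are those of $\widetilde v$ together with the scaling constants $\beta,\beta^{-1}$; padding the affine maps with zeros places $v$ in the architecture $\mathcal{F}$. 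For fixed $\varepsilon$, as $\beta\to\infty$ we get $v_{\beta}\to\widetilde v$ uniformly on $\Omega$ and $\nabla v_{\beta}\to\nabla\widetilde v$ a.e.\ off the breakpoint set, with the gradients uniformly bounded (each layer contributes only the bounded factor $\sigma'(\beta\,\cdot\,)$ and fixed affine factors), so $v_{\beta}\to\widetilde v$ in $H^{1}(\Omega)$ by dominated convergence; hence some finite $\beta$ gives $\lVert\widetilde v-v_{\beta}\rVert_{H^{1}(\Omega)}\leqslant\varepsilon/2$ and therefore $\lVert u-v_{\beta}\rVert_{H^{1}(\Omega)}\leqslant\varepsilon$.

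The main obstacle is to make the choice of $\beta$ quantitative in $\varepsilon$, which is exactly what pins down the exponent $\vartheta$. Propagating a single swish-for-ReLU perturbation through the composition, its amplification is bounded by a product of at most $\widetilde L$ affine-map operator norms, each at most $W\widetilde C\,\varepsilon^{-\widetilde\vartheta}$, so the amplification factor is polynomial in $1/\varepsilon$ (the depth $\widetilde L$ being fixed). Combining this with the uniform $O(\beta^{-1})$ bound on the function discrepancy and an $O(\beta^{-1/2})$-type bound on the $L^{2}$-discrepancy of the derivatives — the scaled-swish derivative departs appreciably from $\mathbf 1_{\{t>0\}}$ only on a region of small measure around the breakpoints, where both derivatives are bounded — one sees that a sufficiently large power $\beta=\varepsilon^{-p(d,s)}$ drives the total $H^{1}$-error of the substitution below $\varepsilon/2$. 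Tracking this choice into the weight list, the weights of $v$ are then bounded by $C\,\varepsilon^{-\vartheta}$ for some $\vartheta=\vartheta(d,s)\geqslant\widetilde\vartheta$, the depth is at most $L=L(d,s)$, and the number of nonzero weights is at most $C\,\varepsilon^{-d/(s-1)}$, which completes the proof. (If \cite[Proposition 4.8]{Guhring} is instead already formulated for a class of smooth non-polynomial activations containing $\sigma$, the statement follows directly by verifying those hypotheses for the swish and rescaling; I present the transfer argument as the self-contained route.)
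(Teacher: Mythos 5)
The paper does not actually prove Theorem~\ref{thm33}: it is imported verbatim from \cite[Proposition 4.8]{Guhring} (``With reference to \dots we have the following theorem''), and the surrounding text immediately moves on to note that the approximation error is independent of the sampling strategy and is not the paper's concern. So there is no in-paper argument to compare yours against; the relevant question is whether your reconstruction stands on its own. Your overall strategy --- invoke the ReLU version of the Sobolev approximation rate and then substitute the rescaled swish $\beta^{-1}\sigma(\beta t)\to\mathrm{ReLU}(t)$ --- is exactly the mechanism by which such results are extended to ReLU-like smooth activations, and your closing parenthetical correctly anticipates the actual situation: the cited proposition is formulated for a class of admissible activations containing the swish, so the theorem follows by verifying those hypotheses, with no transfer argument needed.

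The one genuine soft spot in your self-contained route is the quantitative control of the gradient discrepancy. The qualitative dominated-convergence step is fine ($\sigma'(\beta h)\to\mathbf 1_{\{h>0\}}$ off the breakpoint set, and where a pre-activation vanishes on a set of positive measure its gradient vanishes there too, so both sides contribute zero). But to conclude that the weights of $v$ are bounded by $C\varepsilon^{-\vartheta}$ with $\vartheta=\vartheta(d,s)$ you must choose $\beta$ as an explicit power of $1/\varepsilon$, and your claimed $O(\beta^{-1/2})$ bound on $\left\lVert \nabla v_{\beta}-\nabla\widetilde v\right\rVert_{L^{2}(\Omega)}$ requires that the sets $\left\{x:\left\lvert h_{i}(x)\right\rvert\leqslant\eta\right\}$ have measure controlled by a power of $\eta$ uniformly over the pre-activations $h_{i}$ of $\widetilde v$. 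For a black-box $\widetilde v$ this anti-concentration estimate is false in general (a pre-activation can hover near zero on a large set without vanishing), so $\beta$, and hence $\vartheta$, could end up depending on $u$ rather than only on $d$ and $s$. To close this you would either have to exploit the explicit partition-of-unity structure of the networks produced by the cited construction, or --- more simply --- invoke the version of \cite[Proposition 4.8]{Guhring} stated for general admissible activations, which is what the paper implicitly does.
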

  Hence, it is reasonable to consider the deep neural network with bounded weights, which is consistent with the construction of the deep neural network in this paper. When we extend it to a more general case, i.e., to estimate a function $u \in H^{1}(\Omega)$, there is no satisfactory conclusion yet as far as we know. Because the approximation error is independent of the sampling strategy, this part is not our concern.
  
  \subsection{Training error}
  To compare the training errors with respect to different sampling strategies, we study the convergence order on the mini-batch size $n$ and the dimension $d$. We establish the convergence rates in the case of the Poisson equation~(\ref{eq1}) first and the results of the static Schr\"{o}dinger equation~(\ref{eq2}) follow immediately.

  For the Poisson equation (\ref{eq1}), we recall the loss functional and the iteration of weight parameters
  \begin{equation*}
    \mathcal{L}_{P}(\theta)\coloneqq\int\nolimits_{\Omega}\left(\frac{1}{2}\left\lVert \triangledown_{x} v(x;\theta)\right\rVert_{\ell_{2}}^{2} -f(x)v(x;\theta)\right)\mathrm{d}x+\frac{1}{2}\left( \int\nolimits_{\Omega}v(x;\theta)\mathrm{d}x\right)^{2},
  \end{equation*}
  \begin{equation*}
    \theta_{k+1}=\theta_{k}-\alpha_{k} G_{P}(\theta_{k},\xi_{k}),
  \end{equation*}
  where $G_{P}(\theta_{k},\xi_{k})$ is defined as
  \begin{equation}\label{eqnew}
    \triangledown_{\theta} \left\{\frac{1}{n}\sum_{j = 1}^{n}\left(\frac{1}{2}\left\lVert \triangledown_{x} v(X_{j,k};\theta_{k})\right\rVert_{\ell_{2}}^{2} - f(X_{j,k})v(X_{j,k};\theta_{k})\right) + \frac{1}{2}\left(\frac{1}{n}\sum_{j = 1}^{n}v(X_{j,k};\theta_{k})\right)^{2}\right\}
  \end{equation}
  and $n=2^{\tau}, \tau \in \mathbb{N}^{+}$ and
  $\xi_{k}$ represents $\left\{X_{j,k}\right\}_{j=1}^{n}$, for $k=0,1,2,\dots$. Since $v(\cdot;\theta)$ and $\theta$ are one-to-one, we abbreviate $\mathcal{L}_{P}(v(\cdot;\theta))$ to $\mathcal{L}_{P}(\theta)$. To prove the Lipschitz continuity of $\triangledown_{\theta}\mathcal{L}_{P}(\theta)$, we need the following lemma.
  \begin{lemma} \label{lm35}
    There exist positive constants $L_{3},L_{4},L_{5},L_{6}$ such that $\forall x \in \Omega$, $\forall \theta, \overline{\theta} \in \Theta$,
    \begin{equation*}
      \left\lVert \triangledown_{\theta}v(x;\theta)-\triangledown_{\theta}v(x;\overline{\theta})\right\rVert_{\ell_{2}} \leqslant L_{3} \left\lVert \theta - \overline{\theta} \right\rVert_{\ell_{2}},
    \end{equation*}
    \begin{equation*}
      \left\lVert \triangledown_{x}v(x;\theta)-\triangledown_{x}v(x;\overline{\theta})\right\rVert_{\ell_{2}} \leqslant L_{4} \left\lVert \theta - \overline{\theta} \right\rVert_{\ell_{2}},
    \end{equation*}
    \begin{equation*}
      \left\lVert \triangledown_{\theta}\triangledown_{x}v(x;\theta)-\triangledown_{\theta}\triangledown_{x}v(x;\overline{\theta})\right\rVert_{\ell_{2}}  \leqslant  L_{5} \left\lVert \theta - \overline{\theta} \right\rVert_{\ell_{2}},
    \end{equation*}
    and
    \begin{equation*}
      \left\lVert \triangledown_{\theta}\left[\frac{1}{2}\left(\int_{\Omega}v(x;\theta)\mathrm{d}x\right)^{2}\right] -\triangledown_{\theta}\left[\frac{1}{2}\left(\int_{\Omega}v(x;\overline{\theta})\mathrm{d}x\right)^{2}\right]\right\rVert_{\ell_{2}}  \leqslant L_{6} \left\lVert \theta - \overline{\theta} \right\rVert_{\ell_{2}}.
    \end{equation*}
  \end{lemma}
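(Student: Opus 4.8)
The plan is to exploit two facts: the swish activation $\sigma$ is $C^{\infty}$ with all of its derivatives bounded on bounded sets, and both the parameter set $\Theta=\{\theta:\left\lVert\theta\right\rVert_{\ell_{\infty}}\leqslant B_{\Theta}\}$ and the domain $\Omega=[0,1]^{d}$ are compact, with $\Theta$ moreover convex. Since each affine map $T_{i}$ depends polynomially on the weights and on its input, the realization $v(x;\theta)=T_{L}\circ\sigma\circ\cdots\circ\sigma\circ T_{1}(x)$ is a $C^{\infty}$ function of $(x,\theta)$ on $\mathbb{R}^{d}\times\mathbb{R}^{D}$. Hence $\triangledown_{\theta}v$, $\triangledown_{x}v$ and $\triangledown_{\theta}\triangledown_{x}v$ are all $C^{\infty}$, and so is one further $\theta$-differentiation of each of them: the mixed derivatives $\triangledown_{\theta}^{2}v$, $\triangledown_{\theta}\triangledown_{x}v$ and $\triangledown_{\theta}(\triangledown_{\theta}\triangledown_{x}v)$ are continuous, hence uniformly bounded, on the compact set $\Omega\times\Theta$. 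The first three estimates of the lemma then follow from the mean value inequality applied along the segment joining $\theta$ and $\overline{\theta}$ inside the convex set $\Theta$, with $L_{3},L_{4},L_{5}$ equal to the respective suprema of these mixed-derivative norms over $\Omega\times\Theta$.

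To make this quantitative in the style of Lemma~\ref{lm32}, I would first note that, by induction on the layer index, every pre-activation $T_{i}\circ\sigma\circ\cdots\circ\sigma\circ T_{1}(x)$ lies in a bounded set depending only on $d,L,W,B_{\Theta}$ whenever $x\in\Omega$ and $\left\lVert\theta\right\rVert_{\ell_{\infty}}\leqslant B_{\Theta}$; since $\sigma$ and all its derivatives $\sigma^{(k)}$ are bounded on such sets, the backpropagation formulas express $\triangledown_{\theta}v$, $\triangledown_{x}v$, $\triangledown_{\theta}\triangledown_{x}v$ and their $\theta$-derivatives as finite sums of products whose factors are entries of the matrices $A_{i}$, components of $x$, and values $\sigma^{(k)}(\cdot)$ at the pre-activations, each bounded uniformly over $\Omega\times\Theta$. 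This is exactly the kind of chain-rule computation carried out in~\cite[Lemmas 5.9, 5.10, 5.11]{Jiao} and in the proof of Lemma~\ref{lm34}, merely adapted to the swish activation; it produces a single constant $M$ with $\left\lVert\triangledown_{\theta}v\right\rVert_{\ell_{2}}\leqslant M$ on $\Omega\times\Theta$ and with the relevant mixed-derivative norms likewise bounded by $M$.

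For the last inequality I would set $F(\theta)\coloneqq\int_{\Omega}v(x;\theta)\,\mathrm{d}x$, so that $\triangledown_{\theta}\left[\frac{1}{2} F(\theta)^{2}\right]=F(\theta)\,\triangledown_{\theta}F(\theta)=F(\theta)\int_{\Omega}\triangledown_{\theta}v(x;\theta)\,\mathrm{d}x$. Both factors are bounded on $\Theta$, namely $\left\lvert F(\theta)\right\rvert\leqslant B_{1}$ and $\left\lVert\triangledown_{\theta}F(\theta)\right\rVert_{\ell_{2}}\leqslant M$, and both are Lipschitz in $\theta$: using $\left\lvert\Omega\right\rvert=1$ and Lemma~\ref{lm32}, $\left\lvert F(\theta)-F(\overline{\theta})\right\rvert\leqslant\int_{\Omega}\left\lvert v(x;\theta)-v(x;\overline{\theta})\right\rvert\,\mathrm{d}x\leqslant L_{1}\left\lVert\theta-\overline{\theta}\right\rVert_{\ell_{2}}$, while the first estimate of this lemma gives $\left\lVert\triangledown_{\theta}F(\theta)-\triangledown_{\theta}F(\overline{\theta})\right\rVert_{\ell_{2}}\leqslant L_{3}\left\lVert\theta-\overline{\theta}\right\rVert_{\ell_{2}}$. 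The elementary bound $\left\lVert a b-\overline{a}\,\overline{b}\right\rVert\leqslant\left\lvert a\right\rvert\left\lVert b-\overline{b}\right\rVert+\left\lVert\overline{b}\right\rVert\left\lvert a-\overline{a}\right\rvert$ for a scalar $a$ and a vector $b$ then yields the claim with $L_{6}=B_{1}L_{3}+M L_{1}$.

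The only real obstacle is the bookkeeping in the second paragraph: writing out explicitly, via the chain rule across $L$ layers, the higher mixed derivatives of $v$ and verifying that each factor is bounded uniformly in $(x,\theta)\in\Omega\times\Theta$. This is routine but tedious; at the price of non-explicit constants it can be bypassed entirely by the one-line remark that a $C^{1}$ function on the compact, convex set $\Omega\times\Theta$ has bounded gradient and is therefore Lipschitz, which is all that is needed here.
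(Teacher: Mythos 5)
Your proposal is correct and follows essentially the same route as the paper: the first three Lipschitz bounds are obtained from smoothness of $v$ in $(x,\theta)$ together with compactness (and convexity) of $\Omega\times\Theta$, and the fourth is obtained by writing $\triangledown_{\theta}\bigl[\tfrac{1}{2}F(\theta)^{2}\bigr]=F(\theta)\triangledown_{\theta}F(\theta)$ and applying the product decomposition with the bounds $|F|\leqslant B_{1}$, $\left\lVert\triangledown_{\theta}F\right\rVert_{\ell_{2}}\leqslant B_{3}$ and the Lipschitz constants $L_{1},L_{3}$, which is exactly the paper's argument and yields the same constant $L_{6}=B_{1}L_{3}+B_{3}L_{1}$.
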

  \begin{proof}
    By the construction of $\mathcal{F}$, we have that $v(x;\theta)$ and $\triangledown_{x}v(x;\theta)$ are smooth with respect to $\theta$ in the bounded closed region $\Theta$. Hence, the first three inequalities are true. To prove the fourth inequality, exchanging the order of integration and differentiation, we have
    \begin{eqnarray*}
      & & \left\lVert \triangledown_{\theta}\left[\frac{1}{2}\left(\int_{\Omega}v(x;\theta)\mathrm{d}x\right)^{2}\right] -\triangledown_{\theta}\left[\frac{1}{2}\left(\int_{\Omega}v(x;\overline{\theta})\mathrm{d}x\right)^{2}\right]\right\rVert_{\ell_{2}} \\
      & =&\left\lVert \int_{\Omega}v(x;\theta)\mathrm{d}x \int_{\Omega}\triangledown_{\theta}v(x;\theta)\mathrm{d}x- \int_{\Omega}v(x;\overline{\theta})\mathrm{d}x \int_{\Omega}\triangledown_{\theta}v(x;\overline{\theta})\mathrm{d}x \right\rVert_{\ell_{2}}  \\
      & \leqslant& \left\lvert \int_{\Omega}v(x;\theta)\mathrm{d}x \right\rvert  \left\lVert \int_{\Omega}\left(\triangledown_{\theta}v(x;\theta)-\triangledown_{\theta}v(x;\overline{\theta})\right)\mathrm{d}x \right\rVert_{\ell_{2}} \\
      & & +\left\lVert \int_{\Omega} \triangledown_{\theta}v(x;\overline{\theta})\mathrm{d}x \right\rVert_{\ell_{2}}\left\lvert \int_{\Omega}\left(v(x;\theta)-v(x;\overline{\theta})\right)\mathrm{d}x \right\rvert \\
      & \leqslant& B_{1}L_{3} \left\lVert \theta - \overline{\theta}  \right\rVert_{\ell_{2}}+ L_{1}\sup_{v \in \mathcal{F}} \left\lVert \int_{\Omega} \triangledown_{\theta}v(x;\overline{\theta})\mathrm{d}x \right\rVert_{\ell_{2}} \left\lVert \theta - \overline{\theta}  \right\rVert_{\ell_{2}}. \\
    \end{eqnarray*}
    Since $v(x;\theta)$ is smooth with respect to $\theta$ and $\Theta$ is bounded and closed, we have that there exists a positive constant $B_{3}$ such that $\forall x \in \Omega$, $\forall \theta \in \Theta$,
    \begin{equation*}
      \left\lVert\triangledown_{\theta}v(x;\theta) \right\rVert_{\ell_{2}}\leqslant B_{3}.
    \end{equation*}
    Hence, the proof is completed.
  \end{proof}
  \begin{theorem} \label{thm34}
    If f is bounded in $\Omega$, then the gradient function of $\mathcal{L}_{P}(\theta)$ is Lipschitz continuous with Lipschitz constant $L_{0}>0$, i.e., $\forall \theta, \overline{\theta} \in \Theta$,
    \begin{equation*}
      \left\lVert \triangledown_{\theta}\mathcal{L}_{P}(\theta)-\triangledown_{\theta}\mathcal{L}_{P}(\overline{\theta})\right\rVert_{\ell_{2}}\leqslant L_{0}\left\lVert \theta - \overline{\theta} \right\rVert_{\ell_{2}}.
    \end{equation*}
  \end{theorem}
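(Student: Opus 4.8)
The plan is to differentiate $\mathcal{L}_{P}(\theta)$ under the integral sign and write $\triangledown_{\theta}\mathcal{L}_{P}(\theta)$ explicitly as a sum of three terms, then bound the discrepancy of each term under a perturbation $\theta \to \overline{\theta}$ using the estimates already collected. Concretely,
\begin{equation*}
  \triangledown_{\theta}\mathcal{L}_{P}(\theta) = \int_{\Omega}\triangledown_{x}v(x;\theta)\cdot\triangledown_{\theta}\triangledown_{x}v(x;\theta)\,\mathrm{d}x - \int_{\Omega}f(x)\triangledown_{\theta}v(x;\theta)\,\mathrm{d}x + \triangledown_{\theta}\left[\tfrac{1}{2}\left(\int_{\Omega}v(x;\theta)\,\mathrm{d}x\right)^{2}\right],
\end{equation*}
where exchanging integration and differentiation is justified by the smoothness of $v(\cdot;\theta)$ in $\theta$ on the compact set $\Theta$ (dominated convergence / uniform boundedness of the relevant derivatives). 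The third term's Lipschitz bound is exactly Lemma~\ref{lm35}, so the work reduces to the first two terms.

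For the second term, $\|\int_{\Omega}f(x)(\triangledown_{\theta}v(x;\theta)-\triangledown_{\theta}v(x;\overline{\theta}))\,\mathrm{d}x\|_{\ell_{2}} \leqslant \sup_{x}|f(x)|\cdot L_{3}\|\theta-\overline{\theta}\|_{\ell_{2}}$ directly from the boundedness of $f$ and the first inequality in Lemma~\ref{lm35}. For the first term, I would add and subtract a mixed term to split
\begin{equation*}
  \triangledown_{x}v(x;\theta)\triangledown_{\theta}\triangledown_{x}v(x;\theta) - \triangledown_{x}v(x;\overline{\theta})\triangledown_{\theta}\triangledown_{x}v(x;\overline{\theta}) = \triangledown_{x}v(x;\theta)\left(\triangledown_{\theta}\triangledown_{x}v(x;\theta) - \triangledown_{\theta}\triangledown_{x}v(x;\overline{\theta})\right) + \left(\triangledown_{x}v(x;\theta) - \triangledown_{x}v(x;\overline{\theta})\right)\triangledown_{\theta}\triangledown_{x}v(x;\overline{\theta}),
\end{equation*}
and then bound the two pieces using: (a) $\|\triangledown_{x}v(x;\theta)\|_{\ell_{2}}\leqslant B_{2}$ (Lemma~\ref{lm32}) times $L_{5}\|\theta-\overline{\theta}\|_{\ell_{2}}$ (third inequality of Lemma~\ref{lm35}); and (b) $L_{4}\|\theta-\overline{\theta}\|_{\ell_{2}}$ (second inequality of Lemma~\ref{lm35}) times $\sup_{x,\overline{\theta}}\|\triangledown_{\theta}\triangledown_{x}v(x;\overline{\theta})\|_{\ell_{2}}$, the latter being finite by smoothness of $\triangledown_{x}v$ in $\theta$ on compact $\Theta$ (an $L_{\infty}$-analogue of the constant $B_{3}$ introduced in the proof of Lemma~\ref{lm35}). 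Integrating over $\Omega$ (which has unit measure) preserves these bounds. Summing the three contributions gives $L_{0}$ as an explicit combination of $B_{2}$, $B_{3}$-type bounds, $\sup_{x}|f(x)|$, $L_{3}$, $L_{4}$, $L_{5}$, $L_{6}$.

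The only mild obstacle is bookkeeping: one must be careful that $\triangledown_{\theta}\triangledown_{x}v$ is a matrix-valued object and the product $\triangledown_{x}v \cdot \triangledown_{\theta}\triangledown_{x}v$ is a matrix-vector contraction, so the triangle-inequality splitting and the choice of norms (operator norm versus $\ell_{2}$) should be stated consistently; but since everything is uniformly bounded on $\Omega\times\Theta$ by smoothness, no delicate estimate is needed. I expect no genuine difficulty beyond assembling Lemma~\ref{lm32} and Lemma~\ref{lm35} correctly, and the static Schr\"{o}dinger case then follows by the same argument with the extra term $\tfrac{1}{2}V|v|^{2}$ handled via boundedness of $V$ and Lemma~\ref{lm32}.
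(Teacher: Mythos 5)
Your proposal is correct and follows essentially the same route as the paper: differentiate under the integral sign, handle the $f$-term and the squared-integral term via Lemma~\ref{lm35}, and split the $\triangledown_{x}v\,\triangledown_{\theta}\triangledown_{x}v$ term by adding and subtracting the mixed product, yielding exactly the paper's constant $L_{0}=B_{2}L_{5}+B_{4}L_{4}+L_{3}\sup_{x\in\Omega}\lvert f(x)\rvert+L_{6}$. The only difference is cosmetic: you flag the matrix-versus-vector norm bookkeeping explicitly, which the paper leaves implicit.
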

  \begin{proof}
    Firstly, we exchange the order of integration and differentiation,
    \begin{equation}
      \nonumber
      \triangledown_{\theta}\mathcal{L}_{P}(\theta)= \int_{\Omega}\left(\triangledown_{\theta} \triangledown_{x}v(x;\theta) \triangledown_{x} v(x;\theta)-f(x) \triangledown_{\theta}v(x;\theta)\right)\mathrm{d}x+ \triangledown_{\theta}\left[\frac{1}{2}\left(\int_{\Omega}v(x;\theta)\mathrm{d}x\right)^{2}\right].
    \end{equation}
    Using the smoothness of $v(x;\theta)$ with respect to $x$ and $\theta$, we obtain that there exist a positive constant $B_{4}$ such that $\forall x \in \Omega$, $\forall \theta \in \Theta$,
    \begin{equation*}
      \left\lVert \triangledown_{\theta} \triangledown_{x} v(x;\theta)\right\rVert_{\ell_{2}}\leqslant B_{4}.
    \end{equation*}
    Hence, it follows from the triangle inequality and Lemma \ref{lm35} that
    \begin{eqnarray*}
      & & \left\lVert\triangledown_{\theta}\mathcal{L}_{P}(\theta)-\triangledown_{\theta}\mathcal{L}_{P}(\overline{\theta})\right\rVert_{\ell_{2}} \\
      & \leqslant & \left\lVert \int_{\Omega}\left(\triangledown_{\theta}\triangledown_{x}v(x;\theta)  \triangledown_{x} v(x;\theta)-\triangledown_{\theta} \triangledown_{x}v(x;\overline{\theta})  \triangledown_{x} v(x;\theta)\right)\mathrm{d}x\right\rVert_{\ell_{2}}  \\
      & & +\left\lVert \int_{\Omega}\left(\triangledown_{\theta}\triangledown_{x}v(x;\overline{\theta}) \triangledown_{x} v(x;\theta)-\triangledown_{\theta}\triangledown_{x}v(x;\overline{\theta}) \triangledown_{x} v(x;\overline{\theta})\right)\mathrm{d}x\right\rVert_{\ell_{2}} \\
      & & +\left\lVert \int_{\Omega}\left(f(x) \left(\triangledown_{\theta}v(x;\theta)-\triangledown_{\theta}v(x;\overline{\theta})\right) \right)\mathrm{d}x \right\rVert_{\ell_{2}}  \\
      & & +\left\lVert \triangledown_{\theta}\left[\frac{1}{2}\left(\int_{\Omega}v(x;\theta)\mathrm{d}x\right)^{2}\right] -\triangledown_{\theta}\left[\frac{1}{2}\left(\int_{\Omega}v(x;\overline{\theta})\mathrm{d}x\right)^{2}\right]\right\rVert_{\ell_{2}} \\
      & \leqslant & B_{2}L_{5} \left\lVert \theta - \overline{\theta} \right\rVert_{\ell_{2}}+B_{4}L_{4} \left\lVert \theta - \overline{\theta} \right\rVert_{\ell_{2}} + L_{3} \max_{x\in \Omega}\left\lvert f(x)\right\rvert  \left\lVert \theta - \overline{\theta} \right\rVert_{\ell_{2}} +L_{6} \left\lVert \theta - \overline{\theta} \right\rVert_{\ell_{2}}
    \end{eqnarray*}
    for any $\theta, \overline{\theta}$ in $\Theta$. Taking $L_{0}=B_{2}L_{5}+B_{4}L_{4}+L_{3} \max_{x\in \Omega}\left\lvert f(x)\right\rvert+L_{6}$ yields the result.
  \end{proof}
  Using the method in~\cite[Appendix B.]{Bottou}, we have the following consequence of Theorem~\ref{thm34}. The loss functional value $\mathcal{L}_{P}(\theta)$ can be bounded as
  \begin{equation} \label{eq5}
    \mathcal{L}_{P}(\theta) \leqslant \mathcal{L}_{P}(\overline{\theta})+\triangledown_{\theta}\mathcal{L}_{P}(\overline{\theta})^{T}(\theta-\overline{\theta})+\frac{L_{0}}{2}\left\lVert \theta - \overline{\theta} \right\rVert_{\ell_{2}}^{2}.
  \end{equation}
  
  \begin{lemma} \label{lm36}
    Assume that there exists a constant $0<c<L_{0}$ such that $\forall \theta, \overline{\theta} \in \Theta$,
    \begin{equation} \label{eqnew33}
      \mathcal{L}_{P}(\overline{\theta})\geqslant \mathcal{L}_{P}(\theta)+\triangledown_{\theta}\mathcal{L}_{P}(\theta)^{T}(\overline{\theta}-\theta)+\frac{c}{2}\left\lVert \overline{\theta}-\theta \right\rVert_{\ell_{2}}^{2}.
    \end{equation}
    Then $\mathcal{L}_{P}(\theta)$ has a unique minimizer, denoted by $\theta^{*}$. Moreover, $\forall \theta \in \Theta$, we have
    \begin{equation*}
      \mathcal{L}_{P}(\theta)-\mathcal{L}_{P}(\theta^{*}) \leqslant \frac{1}{2c}\left\lVert \triangledown_{\theta}\mathcal{L}_{P}(\theta)\right\rVert_{\ell_{2}}^{2}.
    \end{equation*}
  \end{lemma}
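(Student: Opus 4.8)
The plan is to read the hypothesis (\ref{eqnew33}) as the first-order characterization of $c$-strong convexity of $\mathcal{L}_{P}$ on the convex set $\Theta$, and then to run the classical argument that a strongly convex, continuous objective on a compact set has a unique minimizer and satisfies a Polyak--{\L}ojasiewicz-type bound. No new machinery beyond Theorem~\ref{thm34} (which makes $\mathcal{L}_{P}$ a $C^{1}$ function of $\theta$, hence continuous) is needed.

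First I would settle existence and uniqueness of $\theta^{*}$. The set $\Theta=\{\theta\in\mathbb{R}^{D}:\left\lVert\theta\right\rVert_{\ell_{\infty}}\leqslant B_{\Theta}\}$ is closed and bounded, hence compact, and $\mathcal{L}_{P}$ is continuous on $\Theta$, so $\mathcal{L}_{P}$ attains a minimum at some $\theta^{*}\in\Theta$ by the extreme value theorem. For uniqueness, suppose $\theta_{1}^{*}$ and $\theta_{2}^{*}$ are both minimizers and put $\theta_{m}=\tfrac{1}{2}(\theta_{1}^{*}+\theta_{2}^{*})$, which lies in $\Theta$ by convexity. Applying (\ref{eqnew33}) with $(\theta,\overline{\theta})=(\theta_{m},\theta_{1}^{*})$ and with $(\theta,\overline{\theta})=(\theta_{m},\theta_{2}^{*})$ and adding, the gradient terms cancel since $\theta_{1}^{*}+\theta_{2}^{*}-2\theta_{m}=0$, while $\left\lVert\theta_{1}^{*}-\theta_{m}\right\rVert_{\ell_{2}}^{2}+\left\lVert\theta_{2}^{*}-\theta_{m}\right\rVert_{\ell_{2}}^{2}=\tfrac{1}{2}\left\lVert\theta_{1}^{*}-\theta_{2}^{*}\right\rVert_{\ell_{2}}^{2}$, giving $\mathcal{L}_{P}(\theta_{1}^{*})+\mathcal{L}_{P}(\theta_{2}^{*})\geqslant 2\mathcal{L}_{P}(\theta_{m})+\tfrac{c}{4}\left\lVert\theta_{1}^{*}-\theta_{2}^{*}\right\rVert_{\ell_{2}}^{2}$. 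Since $\mathcal{L}_{P}(\theta_{m})\geqslant\mathcal{L}_{P}(\theta_{1}^{*})=\mathcal{L}_{P}(\theta_{2}^{*})$, this forces $\theta_{1}^{*}=\theta_{2}^{*}$.

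For the bound, fix $\theta\in\Theta$ and set $q(\overline{\theta}):=\mathcal{L}_{P}(\theta)+\triangledown_{\theta}\mathcal{L}_{P}(\theta)^{T}(\overline{\theta}-\theta)+\tfrac{c}{2}\left\lVert\overline{\theta}-\theta\right\rVert_{\ell_{2}}^{2}$, so that (\ref{eqnew33}) says $\mathcal{L}_{P}(\overline{\theta})\geqslant q(\overline{\theta})$ for every $\overline{\theta}\in\Theta$. Taking the infimum over $\overline{\theta}\in\Theta$ and then enlarging the feasible set to all of $\mathbb{R}^{D}$, I get $\mathcal{L}_{P}(\theta^{*})=\inf_{\overline{\theta}\in\Theta}\mathcal{L}_{P}(\overline{\theta})\geqslant\inf_{\overline{\theta}\in\Theta}q(\overline{\theta})\geqslant\inf_{\overline{\theta}\in\mathbb{R}^{D}}q(\overline{\theta})$. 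The unconstrained quadratic $q$ is minimized at $\overline{\theta}=\theta-\tfrac{1}{c}\triangledown_{\theta}\mathcal{L}_{P}(\theta)$, where its value equals $\mathcal{L}_{P}(\theta)-\tfrac{1}{2c}\left\lVert\triangledown_{\theta}\mathcal{L}_{P}(\theta)\right\rVert_{\ell_{2}}^{2}$; substituting and rearranging yields $\mathcal{L}_{P}(\theta)-\mathcal{L}_{P}(\theta^{*})\leqslant\tfrac{1}{2c}\left\lVert\triangledown_{\theta}\mathcal{L}_{P}(\theta)\right\rVert_{\ell_{2}}^{2}$, as claimed.

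The one subtlety — the step I would flag as the main (if minor) obstacle — is that the unconstrained minimizer $\theta-\tfrac{1}{c}\triangledown_{\theta}\mathcal{L}_{P}(\theta)$ of the minorant $q$ need not lie in $\Theta$, so one cannot simply substitute it into (\ref{eqnew33}); the monotonicity of the infimum under set inclusion used above is precisely what circumvents this. As a consistency check on the hypothesis, one can also note that comparing the upper bound (\ref{eq5}) with (\ref{eqnew33}) applied after interchanging its two arguments leaves $\tfrac{c}{2}\left\lVert\theta-\overline{\theta}\right\rVert_{\ell_{2}}^{2}\leqslant\tfrac{L_{0}}{2}\left\lVert\theta-\overline{\theta}\right\rVert_{\ell_{2}}^{2}$, so $c\leqslant L_{0}$ necessarily and the assumption $0<c<L_{0}$ is natural; this compatibility is not needed in the proof.
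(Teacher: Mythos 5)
Your proof is correct. The paper does not prove this lemma itself—it defers to \cite[Theorem 5.25]{Beck} and \cite[Appendix B]{Bottou}—and your argument is precisely the standard one found there (compactness plus the midpoint argument for existence/uniqueness, and minimizing the quadratic minorant for the Polyak--{\L}ojasiewicz-type bound), with the added care of noting that the unconstrained minimizer of the minorant need not lie in $\Theta$, which the infimum-monotonicity step handles correctly.
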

  Typically, (\ref{eqnew33}) is called the strong convexity of $\mathcal{L}_{P}(\theta)$, and the proof can be found in~\cite[Theorem 5.25]{Beck} and~\cite[Appendix B.]{Bottou}. If $c= L_{0}$, the results are a little different, and we give the results in that case in Remark~\ref{remark33}.
  
  We now establish the training error of DRM when the stepsize is fixed. Before that, we need the following lemma on the variance of the gradient estimator $G_{P}(\theta_{k},\xi_{k})$ defined as~(\ref{eqnew}).

  \begin{lemma} \label{lm37}
     For solving the Poisson equation~(\ref{eq1}), we have
  \begin{equation*}
    \mathbb{E}\left[\mathrm{trace}\left(\mathrm{Var}\left[G_{P}(\theta_{k},\xi_{k})| \theta_{k}\right]\right)\right]=O\left(n^{-1}\right),
  \end{equation*}
  where the expectation is taken with respect to $\left\{\xi_{k}\right\}_{k=0}^{\infty}$.
  \end{lemma}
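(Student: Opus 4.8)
The plan is to split $G_{P}(\theta_{k},\xi_{k})$ into a sample average of conditionally i.i.d.\ terms plus the ``quadratic'' contribution of the penalty $\tfrac12(\int_{\Omega}v\,\mathrm{d}x)^{2}$, and to bound the trace of the conditional variance of each piece by $O(n^{-1})$ \emph{uniformly} in $\theta_{k}\in\Theta$; since $\xi_{k}=\{X_{j,k}\}_{j=1}^{n}$ is independent of $\theta_{k}$, conditioning on $\theta_{k}$ keeps the $X_{j,k}$ i.i.d.\ uniform, and a uniform bound on the conditional variance immediately gives the stated estimate after taking expectation over $\theta_{k}$. Concretely, I would set $Y_{j}(\theta)\coloneqq\triangledown_{\theta}\big(\tfrac12\lVert\triangledown_{x}v(X_{j,k};\theta)\rVert_{\ell_{2}}^{2}-f(X_{j,k})v(X_{j,k};\theta)\big)$, $A_{n}(\theta)\coloneqq\tfrac1n\sum_{j=1}^{n}v(X_{j,k};\theta)$, and $B_{n}(\theta)\coloneqq\tfrac1n\sum_{j=1}^{n}\triangledown_{\theta}v(X_{j,k};\theta)$, so that $G_{P}(\theta_{k},\xi_{k})=\tfrac1n\sum_{j}Y_{j}(\theta_{k})+A_{n}(\theta_{k})B_{n}(\theta_{k})$. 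Using $\mathrm{trace}(\mathrm{Var}[Z\mid\theta_{k}])\leqslant\mathbb{E}[\lVert Z-c\rVert_{\ell_{2}}^{2}\mid\theta_{k}]$ for any $c$ measurable with respect to $\theta_{k}$ (taking $c=\overline{Y}+\overline{a}\,\overline{b}$), together with $\lVert u+w\rVert^{2}\leqslant2\lVert u\rVert^{2}+2\lVert w\rVert^{2}$, it suffices to bound $\mathbb{E}[\lVert\tfrac1n\sum_{j}(Y_{j}(\theta_{k})-\overline{Y})\rVert_{\ell_{2}}^{2}\mid\theta_{k}]$ and $\mathbb{E}[\lVert A_{n}(\theta_{k})B_{n}(\theta_{k})-\overline{a}\,\overline{b}\rVert_{\ell_{2}}^{2}\mid\theta_{k}]$, where $\overline{Y}=\mathbb{E}[Y_{1}(\theta_{k})\mid\theta_{k}]$, $\overline{a}=\mathbb{E}[v(X;\theta_{k})\mid\theta_{k}]$ and $\overline{b}=\mathbb{E}[\triangledown_{\theta}v(X;\theta_{k})\mid\theta_{k}]$.

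For the first term I would use conditional independence: $\mathbb{E}[\lVert\tfrac1n\sum_{j}(Y_{j}-\overline{Y})\rVert_{\ell_{2}}^{2}\mid\theta_{k}]=\tfrac1n\,\mathrm{trace}(\mathrm{Var}[Y_{1}(\theta_{k})\mid\theta_{k}])\leqslant\tfrac1n\mathbb{E}[\lVert Y_{1}(\theta_{k})\rVert_{\ell_{2}}^{2}\mid\theta_{k}]$, and then invoke the uniform bounds already established: $\triangledown_{\theta}\triangledown_{x}v$ is bounded by $B_{4}$ (proof of Theorem~\ref{thm34}), $\triangledown_{x}v$ by $B_{2}$ (Lemma~\ref{lm32}), $\triangledown_{\theta}v$ by $B_{3}$ (proof of Lemma~\ref{lm35}), and $f$ is bounded by hypothesis, so $Y_{1}$ is bounded uniformly on $\Omega\times\Theta$ and this term is $O(n^{-1})$ with a constant free of $\theta_{k}$.

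For the quadratic term I would use the algebraic identity $A_{n}B_{n}-\overline{a}\,\overline{b}=(A_{n}-\overline{a})B_{n}+\overline{a}(B_{n}-\overline{b})$ and the uniform bounds $\lvert A_{n}\rvert,\lvert\overline{a}\rvert\leqslant B_{1}$ (Lemma~\ref{lm32}) and $\lVert B_{n}\rVert_{\ell_{2}},\lVert\overline{b}\rVert_{\ell_{2}}\leqslant B_{3}$, which give $\lVert A_{n}B_{n}-\overline{a}\,\overline{b}\rVert_{\ell_{2}}\leqslant B_{3}\lvert A_{n}-\overline{a}\rvert+B_{1}\lVert B_{n}-\overline{b}\rVert_{\ell_{2}}$, hence $\mathbb{E}[\lVert A_{n}B_{n}-\overline{a}\,\overline{b}\rVert_{\ell_{2}}^{2}\mid\theta_{k}]\leqslant2B_{3}^{2}\,\mathrm{Var}[A_{n}\mid\theta_{k}]+2B_{1}^{2}\,\mathrm{trace}(\mathrm{Var}[B_{n}\mid\theta_{k}])$. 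Because $A_{n}$ and $B_{n}$ are sample means of $n$ conditionally i.i.d.\ and uniformly bounded quantities, both $\mathrm{Var}[A_{n}\mid\theta_{k}]$ and $\mathrm{trace}(\mathrm{Var}[B_{n}\mid\theta_{k}])$ equal $\tfrac1n$ times a quantity bounded uniformly in $\theta_{k}$, so this term is $O(n^{-1})$ as well. Adding the two bounds yields $\mathrm{trace}(\mathrm{Var}[G_{P}(\theta_{k},\xi_{k})\mid\theta_{k}])\leqslant Cn^{-1}$ with $C$ independent of $\theta_{k}$, and taking expectation over $\theta_{k}$ (equivalently over $\{\xi_{k}\}_{k=0}^{\infty}$) completes the argument. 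The only step requiring care is the quadratic penalty contribution; the identity above reduces it to the variance of an ordinary sample mean, leaning on the uniform $\ell_{\infty}$- and gradient-bounds of Lemmas~\ref{lm32} and~\ref{lm35}. The static Schr\"{o}dinger case is analogous: the extra term $\tfrac12 V\lvert v\rvert^{2}$ adds one more uniformly bounded summand to the linear part and leaves the quadratic part untouched.
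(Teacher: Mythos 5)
Your proof is correct and, where it departs from the paper's, it is the more careful of the two. The shared core is identical: conditional on $\theta_k$ the points $X_{j,k}$ remain i.i.d.\ uniform, so the variance of a sample mean of uniformly bounded terms is $O(n^{-1})$. You differ in two respects. First, the paper simply declares the sample-average part of $G_{P}$ to be the ``dominant term'' and discards the gradient of the penalty $\tfrac12\left(\tfrac1n\sum_{j}v(X_{j,k};\theta_k)\right)^{2}$ without justification; you treat that term explicitly via the identity $A_nB_n-\overline a\,\overline b=(A_n-\overline a)B_n+\overline a(B_n-\overline b)$ together with the variational characterization $\mathrm{trace}\left(\mathrm{Var}\left[Z\mid\theta_k\right]\right)\leqslant\mathbb{E}\left[\lVert Z-c\rVert_{\ell_2}^{2}\mid\theta_k\right]$, which neatly sidesteps the fact that $\mathbb{E}[A_nB_n\mid\theta_k]\neq\overline a\,\overline b$. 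Second, to pass from the conditional variance to its expectation over $\{\xi_k\}$, the paper writes $\mathrm{trace}\left(\mathrm{Var}\left[G_P\mid\theta_k\right]\right)=h(\theta_k)/n$ and controls $\mathbb{E}[h(\theta_k)]$ by induction on $k$ with a Taylor expansion of $h$ about $\theta_0$, a step that leaves the boundedness of the remainder unaddressed; you instead prove the bound uniformly in $\theta_k\in\Theta$ using the uniform bounds $B_1,B_2,B_3,B_4$ and the boundedness of $f$ already established in Lemma~\ref{lm32} and the proofs of Lemma~\ref{lm35} and Theorem~\ref{thm34}, which makes the final expectation step immediate. Your route buys rigor at no extra cost, and your closing observation that the Schr\"{o}dinger case only adds one more uniformly bounded summand to the linear part matches how the paper extends the result in Theorem~\ref{thm38}.
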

  \begin{proof}
  For $G_{P}(\theta_{k},\xi_{k})$, we can only consider its dominant term when we study the order of $\mathbb{E}\left[\mathrm{trace}\left(\mathrm{Var}\left[G_{P}(\theta_{k},\xi_{k})| \theta_{k}\right]\right)\right]$ with respect to $n$, namely the first term of $G_{P}(\theta_{k},\xi_{k})$
  \begin{equation*}
      \frac{1}{n}\sum_{j = 1}^{n}\left(\frac{1}{2}\left\lVert \triangledown_{x} v(X_{j,k};\theta_{k})\right\rVert_{\ell_{2}}^{2} - f(X_{j,k})v(X_{j,k};\theta_{k})\right)\eqqcolon\frac{1}{n}\sum_{j = 1}^{n}\mu_{P}(X_{j,k};\theta_{k}).
  \end{equation*}
    By the central limit theorem and some routine computation, we have
    \begin{equation*}
        \mathrm{trace}\left(\mathrm{Var}\left[G_{P}(\theta_{k},\xi_{k})| \theta_{k}\right]\right)=\frac{h(\theta_{k})}{n},
    \end{equation*}
    where 
    \begin{equation*}
      h(\theta_{k})=\mathrm{Var}\left[\frac{1}{2}\left\lVert \triangledown_{x} v(X;\theta_{k})\right\rVert_{\ell_{2}}^{2} - f(X)v(X;\theta_{k})|\theta_{k}\right].
    \end{equation*}
    
    Using the fact that $\theta_{0}$ is a constant, for $n=0$,
    \begin{equation*}
       \mathbb{E}\left[\mathrm{trace}\left(\mathrm{Var}\left[G_{P}(\theta_{0},\xi_{0})| \theta_{0}\right]\right)\right]=\mathbb{E}\left[\frac{h(\theta_{0})}{n}\right]=O\left(n^{-1}\right).
    \end{equation*}
    For $n=1$,
    \begin{eqnarray*}
     \mathbb{E}\left[\mathrm{trace}\left(\mathrm{Var}\left[G_{P}(\theta_{1},\xi_{1})| \theta_{1}\right]\right)\right]&=&\mathbb{E}\left[\frac{h(\theta_{0}-\alpha_{0} n^{-1}\sum_{j = 1}^{n}\mu_{P}(X_{j,0};\theta_{0}))}{n}\right]\\
     &=&\mathbb{E}\left[\frac{h(\theta_{0})-\alpha_{0} n^{-1}\sum_{j = 1}^{n}\mu_{P}(X_{j,0};\theta_{0})h'(\theta_{0})+\cdots}{n}\right]\\
     &=&O\left(n^{-1}\right),
    \end{eqnarray*}
    where the second equality follows from the Taylor expansion. By the induction method, we complete the proof.
  \end{proof}

  \begin{theorem}\label{thm35}
    Under the same assumptions given in Lemma~\ref{lm36}, for solving the Poisson equation~(\ref{eq1}), if the iteration stepsize $\alpha$ is a constant satisfying $0<\alpha < 2/L_{0}$, then the training error of DRM satisfies
    \begin{equation*}
      \Delta \mathcal{L}_{MCtra} = O\left(n^{-1}\right).
    \end{equation*}
  \end{theorem}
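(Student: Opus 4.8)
The plan is to run the classical fixed-stepsize analysis of stochastic gradient descent, combining the descent inequality~(\ref{eq5}), the strong-convexity consequence in Lemma~\ref{lm36}, and the variance estimate in Lemma~\ref{lm37}. Set $\delta_{k}\coloneqq\mathbb{E}\bra{\mathcal{L}_{P}(\theta_{k})}-\mathcal{L}_{P}(\theta^{*})\geqslant 0$, where $\theta^{*}$ is the unique minimizer from Lemma~\ref{lm36}; by the one-to-one correspondence between $\mathcal{F}$ and $\Theta$ we have $\mathcal{L}_{P}(\theta^{*})=\mathcal{L}(u_{\mathcal{F}})$, so that $\Delta\mathcal{L}_{MCtra}=\lim_{k\to\infty}\delta_{k}$ and it suffices to bound $\delta_{k}$.

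First I would substitute $\theta_{k+1}-\theta_{k}=-\alpha\,G_{P}(\theta_{k},\xi_{k})$ into~(\ref{eq5}) with $\theta=\theta_{k+1}$, $\overline{\theta}=\theta_{k}$ and take the conditional expectation given $\theta_{k}$. Using that $G_{P}(\theta_{k},\xi_{k})$ from~(\ref{eqnew}) satisfies $\mathbb{E}\bra{G_{P}(\theta_{k},\xi_{k})\mid\theta_{k}}=\triangledown_{\theta}\mathcal{L}_{P}(\theta_{k})$ together with $\mathbb{E}\bra{\norm{G_{P}(\theta_{k},\xi_{k})}_{\ell_{2}}^{2}\mid\theta_{k}}=\norm{\triangledown_{\theta}\mathcal{L}_{P}(\theta_{k})}_{\ell_{2}}^{2}+\mathrm{trace}\big(\mathrm{Var}\bra{G_{P}(\theta_{k},\xi_{k})\mid\theta_{k}}\big)$, one obtains
\begin{equation*}
\mathbb{E}\bra{\mathcal{L}_{P}(\theta_{k+1})\mid\theta_{k}}\leqslant\mathcal{L}_{P}(\theta_{k})-\left(\alpha-\frac{L_{0}\alpha^{2}}{2}\right)\norm{\triangledown_{\theta}\mathcal{L}_{P}(\theta_{k})}_{\ell_{2}}^{2}+\frac{L_{0}\alpha^{2}}{2}\,\mathrm{trace}\big(\mathrm{Var}\bra{G_{P}(\theta_{k},\xi_{k})\mid\theta_{k}}\big).
\end{equation*}
Since $0<\alpha<2/L_{0}$, the coefficient $\alpha-L_{0}\alpha^{2}/2$ is positive, so I can apply $\norm{\triangledown_{\theta}\mathcal{L}_{P}(\theta_{k})}_{\ell_{2}}^{2}\geqslant 2c\big(\mathcal{L}_{P}(\theta_{k})-\mathcal{L}_{P}(\theta^{*})\big)$ from Lemma~\ref{lm36}. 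Taking a further expectation and using Lemma~\ref{lm37} to pick a constant $M>0$ with $\mathbb{E}\bra{\mathrm{trace}\big(\mathrm{Var}\bra{G_{P}(\theta_{k},\xi_{k})\mid\theta_{k}}\big)}\leqslant Mn^{-1}$ uniformly in $k$ yields the linear recursion
\begin{equation*}
\delta_{k+1}\leqslant\rho\,\delta_{k}+\frac{L_{0}\alpha^{2}M}{2}\,n^{-1},\qquad \rho\coloneqq 1-c\alpha(2-L_{0}\alpha).
\end{equation*}

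Next I would verify $0<\rho<1$: on $(0,2/L_{0})$ the function $\alpha(2-L_{0}\alpha)$ is positive and bounded above by $1/L_{0}$, and $0<c<L_{0}$, so $0<c\alpha(2-L_{0}\alpha)<1$. Unrolling the recursion gives $\delta_{k}\leqslant\rho^{k}\delta_{0}+\frac{L_{0}\alpha^{2}M}{2(1-\rho)}\,n^{-1}$, and letting $k\to\infty$ produces
\begin{equation*}
\Delta\mathcal{L}_{MCtra}=\lim_{k\to\infty}\delta_{k}\leqslant\frac{L_{0}\alpha^{2}M}{2(1-\rho)}\,n^{-1}=O(n^{-1}).
\end{equation*}
The corresponding statement for the static Schr\"{o}dinger equation~(\ref{eq2}) follows the same way, with the analogues of Lemmas~\ref{lm35}--\ref{lm37} for $\mathcal{L}_{S}$.

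I expect the main obstacle to be that $G_{P}(\theta_{k},\xi_{k})$ is not exactly unbiased: approximating $\left(\int_{\Omega}v(x;\theta_{k})\,\mathrm{d}x\right)^{2}$ by $\big(n^{-1}\sum_{j=1}^{n}v(X_{j,k};\theta_{k})\big)^{2}$ introduces a bias of order $O(n^{-1})$, so in fact $\mathbb{E}\bra{G_{P}(\theta_{k},\xi_{k})\mid\theta_{k}}=\triangledown_{\theta}\mathcal{L}_{P}(\theta_{k})+b_{k}$ with $\norm{b_{k}}_{\ell_{2}}=O(n^{-1})$. This bias has to be carried through the descent step, where, using that $\triangledown_{\theta}\mathcal{L}_{P}$ is continuous, hence bounded, on the compact set $\Theta$, it only adds terms of order $O(n^{-1})$ to the recursion and hence does not change the final rate. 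Two further technical points, handled exactly as in standard SGD analysis: the estimates of Lemmas~\ref{lm35}--\ref{lm37} are valid on $\Theta$, so one assumes the iterates $\theta_{k}$ stay in $\Theta$ (or inserts a projection onto $\Theta$); and the variance bound $Mn^{-1}$ must be uniform in $k$, which is precisely what the inductive argument underlying Lemma~\ref{lm37} gives.
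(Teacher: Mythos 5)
Your proof follows essentially the same route as the paper's: the descent inequality~(\ref{eq5}), the conditional variance decomposition of $\mathbb{E}\left[\left\lVert G_{P}\right\rVert_{\ell_{2}}^{2}\mid\theta_{k}\right]$, strong convexity via Lemma~\ref{lm36} to obtain a linear recursion with contraction factor $\alpha^{2}L_{0}c-2\alpha c+1$, and Lemma~\ref{lm37} for the $O(n^{-1})$ variance bound. Your closing observation that the squared-mean term makes $G_{P}(\theta_{k},\xi_{k})$ biased by $O(n^{-1})$ is a point the paper's proof silently skips (it asserts exact unbiasedness), and your treatment of it is the more careful of the two.
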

  \begin{proof}
    For MC, using the fact that $\theta_{i}$ is independent of $\xi_{k}$ for $i\leqslant k$, we have that
    \begin{equation*}
      \mathbb{E} \left[G_{P}(\theta_{k},\xi_{k})| \theta_{k}\right] =\triangledown_{\theta}\mathcal{L}_{P}(\theta_{k}).
    \end{equation*}
    By the definition of conditional variance, it holds
    \begin{eqnarray*}
      \mathbb{E}\left[\left\lVert G_{P}(\theta_{k},\xi_{k}) \right\rVert_{\ell_{2}}^{2} | \theta_{k}\right] & = & \mathrm{trace}\left(\mathrm{Var}\left[G_{P}(\theta_{k},\xi_{k})| \theta_{k}\right]\right) + \left\lVert \mathbb{E}\left[G_{P}(\theta_{k},\xi_{k})| \theta_{k}\right]  \right\rVert_{\ell_{2}}^{2}\\
      & = &O\left(n^{-1}\right)+\left\lVert \triangledown_{\theta}\mathcal{L}_{P}(\theta_{k})\right\rVert_{\ell_{2}}^{2}. \\
    \end{eqnarray*}
    Based on~(\ref{eq5}), we can derive the difference between  the loss functional values in two adjacent steps with fixed stepsize as
    \begin{eqnarray*}
      \mathcal{L}_{P}(\theta_{k+1})-\mathcal{L}_{P}(\theta_{k}) & \leqslant& \triangledown_{\theta}\mathcal{L}_{P}(\theta_{k})^{T}  (\theta_{k+1}-\theta_{k})+\frac{L_{0}}{2}\left\lVert \theta_{k+1}-\theta_{k} \right\rVert_{\ell_{2}}^{2} \\
      & = &-\alpha \triangledown_{\theta}\mathcal{L}_{P}(\theta_{k})^{T} G_{P}(\theta_{k},\xi_{k}) +\frac{\alpha^{2}L_{0}}{2}\left\lVert G_{P}(\theta_{k},\xi_{k})\right\rVert_{\ell_{2}}^{2}.
    \end{eqnarray*}
    Taking conditional expectations for a given $\theta_{k}$ on both sides, we have
    \begin{eqnarray*}
      & & \mathbb{E}\left[\mathcal{L}_{P}(\theta_{k+1})| \theta_{k}\right]  -\mathcal{L}_{P}(\theta_{k}) \\
      & \leqslant &-\alpha \triangledown_{\theta}\mathcal{L}_{P}(\theta_{k})^{T} \mathbb{E}\left[G_{P}(\theta_{k},\xi_{k})| \theta_{k}\right] +\frac{\alpha^{2}L_{0}}{2} \mathbb{E}\left[\left\lVert G_{P}(\theta_{k},\xi_{k})\right\rVert_{\ell_{2}}^{2}| \theta_{k}\right]    \\
      & = &-\alpha \left\lVert \triangledown_{\theta}\mathcal{L}_{P}(\theta_{k})\right\rVert_{\ell_{2}}^{2}+\frac{\alpha^{2}L_{0}}{2}\left[\mathrm{trace}\left(\mathrm{Var}\left[G_{P}(\theta_{k},\xi_{k})| \theta_{k}\right]\right)+\left\lVert \triangledown_{\theta}\mathcal{L}_{P}(\theta_{k})\right\rVert_{\ell_{2}}^{2}\right] \\
      & = &\left(\frac{\alpha^{2}L_{0}}{2}-\alpha \right) \left\lVert \triangledown_{\theta}\mathcal{L}_{P}(\theta_{k})\right\rVert_{\ell_{2}}^{2}+\frac{\alpha^{2}L_{0}}{2} \mathrm{trace}\left(\mathrm{Var}\left[G_{P}(\theta_{k},\xi_{k})| \theta_{k}\right]\right) \\
      & \leqslant & \left(\alpha^{2}L_{0}c-2\alpha c\right) \left(\mathcal{L}_{P}(\theta_{k})-\mathcal{L}_{P}(\theta^{*})\right)+\frac{\alpha^{2}L_{0}}{2} \mathrm{trace}\left(\mathrm{Var}\left[G_{P}(\theta_{k},\xi_{k})| \theta_{k}\right]\right).
    \end{eqnarray*}
    The last inequality follows from $\alpha^{2}L_{0}/2-\alpha < 0$ and Lemma~\ref{lm36}. A routine computation gives rise to the following inequality
    \begin{eqnarray*}
      & \mspace{21mu} & \mathbb{E}\left[\mathcal{L}_{P}(\theta_{k+1})| \theta_{k}\right]-\mathcal{L}_{P}(\theta^{*})+\frac{\alpha L_{0}\mathrm{trace}\left(\mathrm{Var}\left[G_{P}(\theta_{k},\xi_{k})| \theta_{k}\right]\right)}{2\alpha L_{0}c-4c} \\
      & \leqslant & \left(\alpha^{2}L_{0} c-2 \alpha c+1\right) \left[\mathcal{L}_{P}(\theta_{k})-\mathcal{L}_{P}(\theta^{*})+\frac{\alpha L_{0}\mathrm{trace}\left(\mathrm{Var}\left[G_{P}(\theta_{k},\xi_{k})| \theta_{k}\right]\right)}{2\alpha L_{0}c-4c}\right].
    \end{eqnarray*}
    From $0<\alpha<2/L_{0}$ and $0<c< L_{0}$, we can derive that $0<\alpha^{2}L_{0} c-2 \alpha c+1<1$. Using Lemma~\ref{lm37} and taking mathematical expectations with respect to $\left\{\xi_{k}\right\}_{k=0}^{\infty}$, we obtain
    \begin{equation*}
      \mathbb{E}\left[\mathrm{trace}\left(\mathrm{Var}\left[G_{P}(\theta_{k},\xi_{k})| \theta_{k}\right]\right)\right]=O\left(n^{-1}\right).
    \end{equation*}
    Hence, the expected optimality gap satisfies
    \begin{equation*}
      \lim_{k \to +\infty} \mathbb{E}[\mathcal{L}_{P}(\theta_{k})-\mathcal{L}_{P}(\theta^{*})]=O\left(n^{-1}\right).
    \end{equation*}
  \end{proof}
  
  To analyze the training error of DRM-QMC, we need the following lemma on the gradient $\triangledown_{\theta}\mathcal{L}_{P}(\theta_{k})$ and its estimator $G_{P}(\theta_{k},\xi_{k})$.
  \begin{lemma} \label{lm38}
    Let $\left\{\xi_{k}\right\}_{k=0}^{\infty}$ be a low discrepancy sequence and $f\in C^{d}(\Omega)$. There exists a positive constant $C_{4}$ such that $\forall k \in \mathbb{N}$,
    \begin{enumerate}
      \item[(i)] \begin{equation*}
              \left\lVert G_{P}(\theta_{k},\xi_{k})-\triangledown_{\theta}\mathcal{L}_{P}(\theta_{k}) \right\rVert_{\ell_{2}} \leqslant C_{4} n^{-1}\left(\log n\right)^{d},
            \end{equation*}
      \item[(ii)] \begin{equation*}
              \triangledown_{\theta}\mathcal{L}_{P}(\theta_{k})^{T} G_{P}(\theta_{k},\xi_{k}) \geqslant
              \left\lVert \triangledown_{\theta}\mathcal{L}_{P}(\theta_{k})\right\rVert_{\ell_{2}}^{2}-C_{4} n^{-1}\left(\log n\right)^{d} \left\lVert \triangledown_{\theta}\mathcal{L}_{P}(\theta_{k})\right\rVert_{\ell_{2}}.
            \end{equation*}
    \end{enumerate}
  \end{lemma}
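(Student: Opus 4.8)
The plan is to treat part~(i) as a Koksma--Hlawka estimate, uniform in $k$, and part~(ii) as a one-line consequence of part~(i) via Cauchy--Schwarz. First I would exchange differentiation with both summation and integration (legitimate by the smoothness of $v(\cdot;\theta)$ in $x$ and $\theta$ on the compact sets involved, exactly as in the proofs of Lemma~\ref{lm35} and Theorem~\ref{thm34}) to write
\begin{equation*}
  G_{P}(\theta_{k},\xi_{k}) = \frac{1}{n}\sum_{j=1}^{n}\Psi(X_{j,k};\theta_{k}) + \left(\frac{1}{n}\sum_{j=1}^{n}v(X_{j,k};\theta_{k})\right)\left(\frac{1}{n}\sum_{j=1}^{n}\triangledown_{\theta}v(X_{j,k};\theta_{k})\right),
\end{equation*}
where $\Psi(x;\theta)\coloneqq\triangledown_{\theta}\triangledown_{x}v(x;\theta)\,\triangledown_{x}v(x;\theta)-f(x)\triangledown_{\theta}v(x;\theta)$, together with the analogous expression for $\triangledown_{\theta}\mathcal{L}_{P}(\theta_{k})$ obtained by replacing every average $\frac1n\sum_{j}$ with $\int_{\Omega}\cdot\,\mathrm{d}x$ (this is precisely the formula for $\triangledown_{\theta}\mathcal{L}_{P}$ used in the proof of Theorem~\ref{thm34}). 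Subtracting, the error $G_{P}(\theta_{k},\xi_{k})-\triangledown_{\theta}\mathcal{L}_{P}(\theta_{k})$ splits into (a) the quadrature error of the vector field $\Psi(\cdot;\theta_{k})$ and (b) the difference of the two ``product of averages'' terms.

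For (b) I would use the algebraic identity $ab-\bar a\bar b=(a-\bar a)b+\bar a(b-\bar b)$ with $a=\frac1n\sum_{j}v(X_{j,k};\theta_{k})$, $\bar a=\int_{\Omega}v(x;\theta_{k})\,\mathrm{d}x$, and $b,\bar b$ the corresponding $\triangledown_{\theta}v$ averages. Here $|a|,|\bar a|\leqslant B_{1}$ and $\|b\|_{\ell_{2}},\|\bar b\|_{\ell_{2}}\leqslant B_{3}$ (the bound $\|\triangledown_{\theta}v(x;\theta)\|_{\ell_{2}}\leqslant B_{3}$ from the proof of Lemma~\ref{lm35}), so (b) is controlled once $|a-\bar a|$ and $\|b-\bar b\|_{\ell_{2}}$ are bounded; but these are exactly the QMC errors for the integrands $v(\cdot;\theta_{k})$ and $\triangledown_{\theta}v(\cdot;\theta_{k})$.

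The key step is then to invoke the Koksma--Hlawka inequality for $v(\cdot;\theta_{k})$, $\triangledown_{\theta}v(\cdot;\theta_{k})$ and $\Psi(\cdot;\theta_{k})$, which requires their Hardy--Krause variations to be bounded uniformly in $k$, i.e.\ uniformly over $\theta_{k}\in\Theta$. For $v$ this is Lemma~\ref{lm34}. For $\triangledown_{\theta}v$ and $\Psi$ I would repeat the chain-rule argument of Lemmas~\ref{lm34} and~\ref{coro31} essentially verbatim: since $\sigma$ is smooth, every mixed $x$-derivative of $v$ is continuous, and differentiating further in $\theta$ leaves a finite sum of products of (i) derivatives $\sigma^{(m)}$ evaluated on the compact sets $T_{i}\circ\sigma\circ\cdots\circ T_{1}(\Omega)$, (ii) entries of the $A_{i}$, bounded by $B_{\Theta}$, and, for $\Psi$, (iii) $f$ and its mixed partials, which are bounded because $f\in C^{d}(\Omega)$ (the boundedness of $\triangledown_{\theta}\triangledown_{x}v$ and $\triangledown_{\theta}v$ themselves being the constants $B_{4}$ and $B_{3}$ of the earlier proofs). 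Hence each $V^{(k)}$ of each component is bounded independently of $\theta_{k}$, so $V_{HK}(v(\cdot;\theta_{k}))$, $V_{HK}(\triangledown_{\theta}v(\cdot;\theta_{k}))$ and $V_{HK}(\Psi(\cdot;\theta_{k}))$ are all at most some constant free of $k$. Combining with $D_{n}^{*}=O(n^{-1}(\log n)^{d})$ for the low discrepancy sequence and absorbing $B_{1}$, $B_{3}$, the three variation bounds and the discrepancy constant into a single $C_{4}$ yields~(i). For~(ii) I would write $\triangledown_{\theta}\mathcal{L}_{P}(\theta_{k})^{T}G_{P}(\theta_{k},\xi_{k})=\|\triangledown_{\theta}\mathcal{L}_{P}(\theta_{k})\|_{\ell_{2}}^{2}+\triangledown_{\theta}\mathcal{L}_{P}(\theta_{k})^{T}\big(G_{P}(\theta_{k},\xi_{k})-\triangledown_{\theta}\mathcal{L}_{P}(\theta_{k})\big)$, bound the last term below by $-\|\triangledown_{\theta}\mathcal{L}_{P}(\theta_{k})\|_{\ell_{2}}\,\|G_{P}(\theta_{k},\xi_{k})-\triangledown_{\theta}\mathcal{L}_{P}(\theta_{k})\|_{\ell_{2}}$ via Cauchy--Schwarz, and insert~(i). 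The main obstacle is purely bookkeeping: ensuring the Hardy--Krause bound for the $\theta$-differentiated, vector-valued integrand $\Psi$ (a product of two mixed derivatives, minus $f$ times a derivative) is genuinely uniform over the whole box $\Theta$, and checking that the product-term decomposition in (b) introduces no hidden $\theta$-dependent, $n$-independent blow-up; once that is in hand, everything else is a direct application of Koksma--Hlawka and Cauchy--Schwarz.
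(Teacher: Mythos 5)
Your proposal is correct and follows essentially the same route as the paper: part~(i) is proved by the same decomposition into the Koksma--Hlawka error of $\Psi(\cdot;\theta_k)$ plus the difference of the two product-of-averages terms (handled with the identity $ab-\bar a\bar b=(a-\bar a)b+\bar a(b-\bar b)$ and the uniform Hardy--Krause bounds obtained by repeating the chain-rule arguments of Lemmas~\ref{lm34} and~\ref{coro31}). The only cosmetic difference is in part~(ii), where you apply Cauchy--Schwarz to $\triangledown_{\theta}\mathcal{L}_{P}(\theta_{k})^{T}\bigl(G_{P}(\theta_{k},\xi_{k})-\triangledown_{\theta}\mathcal{L}_{P}(\theta_{k})\bigr)$ directly --- a slightly cleaner one-liner than the paper's substitution of the expanded inequality~(\ref{eq7}) into~(\ref{eq6}), but with the same conclusion.
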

  \begin{proof}
    For~$(i)$, proceeding as in the proof of Lemmas~\ref{lm34} and~\ref{coro31}, we obtain that there exist positive constants $C_{5}$ and $C_{6}$ such that $\forall \theta \in \Theta$,
    \begin{equation*}
      V_{HK}\left(\triangledown_{\theta}\triangledown_{x}v(x;\theta) \triangledown_{x} v(x;\theta)-f(x) \triangledown_{\theta}v(x;\theta)\right) \leqslant C_{5}
    \end{equation*}
    and
    \begin{equation*}
      V_{HK}\left(\triangledown_{\theta}v(x;\theta)  \right) \leqslant C_{6}.
    \end{equation*}
    Using the Koksma-Hlawka inequality and Lemmas~\ref{lm32} and~\ref{lm34}, we have
    \begin{eqnarray*}
      &\mspace{21mu}&\left\lVert G_{P}(\theta_{k},\xi_{k})-\triangledown_{\theta}\mathcal{L}_{P}(\theta_{k}) \right\rVert_{\ell_{2}}\\
      &\leqslant & \frac{1}{2} \left\lVert \triangledown_{\theta}\left(\frac{1}{n}\sum_{j = 1}^{n}v(X_{j,k};\theta_{k})\right)^{2}-\triangledown_{\theta} \left( \int\nolimits_{\Omega}v(x;\theta)\mathrm{d}x\right)^{2}\right\rVert_{\ell_{2}}+ C_{5}D_{n}^{*}\\
      &\leqslant & \left\lVert \frac{1}{n}\sum_{j = 1}^{n}v(X_{j,k};\theta_{k}) \frac{1}{n}\sum_{j = 1}^{n} \triangledown_{\theta} v(X_{j,k};\theta_{k})-\frac{1}{n}\sum_{j = 1}^{n}v(X_{j,k};\theta_{k}) \int\nolimits_{\Omega}\triangledown_{\theta} v(x;\theta)\mathrm{d}x\right\rVert_{\ell_{2}} \\
      &\mspace{21mu}& +\left\lVert \frac{1}{n}\sum_{j = 1}^{n}v(X_{j,k};\theta_{k}) \int\nolimits_{\Omega}\triangledown_{\theta} v(x;\theta)\mathrm{d}x- \int\nolimits_{\Omega}v(x;\theta)\mathrm{d}x\int\nolimits_{\Omega}\triangledown_{\theta} v(x;\theta)\mathrm{d}x\right\rVert_{\ell_{2}}+ C_{5}D_{n}^{*}\\
      &\leqslant & B_{1}\left\lVert \frac{1}{n}\sum_{j = 1}^{n} \triangledown_{\theta} v(X_{j,k};\theta_{k})-\int\nolimits_{\Omega}\triangledown_{\theta} v(x;\theta)\mathrm{d}x\right\rVert_{\ell_{2}} \\
      & & + B_{3}\left\lvert \frac{1}{n}\sum_{j = 1}^{n}v(X_{j,k};\theta_{k})-\int\nolimits_{\Omega}v(x;\theta)\mathrm{d}x \right\rvert + C_{5}D_{n}^{*} \\
      &\leqslant & (B_{1}C_{6}+B_{3}C_{1}+C_{5})D_{n}^{*}.
    \end{eqnarray*}
    Notice that $D_{n}^{*}=O\left(n^{-1}\left(\log n\right)^{d}\right)$, so~$(i)$ is proved.

    For~$(ii)$, using the result of~$(i)$, we obtain
    \begin{equation}\label{eq6}
      \triangledown_{\theta}\mathcal{L}_{P}(\theta_{k})^{T} G_{P}(\theta_{k},\xi_{k}) \geqslant \frac{1}{2} \left(\left\lVert \triangledown_{\theta}\mathcal{L}_{P}(\theta_{k})\right\rVert_{\ell_{2}}^{2} + \left\lVert G_{P}(\theta_{k},\xi_{k})\right\rVert_{\ell_{2}}^{2}-C_{4}^{2}n^{-2} \left(\log n\right)^{2d}\right)
    \end{equation}
    and
    \begin{equation}\label{eq7}
      \left\lVert G_{P}(\theta_{k},\xi_{k})\right\rVert_{\ell_{2}}^{2} \geqslant \left\lVert \triangledown_{\theta}\mathcal{L}_{P}(\theta_{k})\right\rVert_{\ell_{2}}^{2}+C_{4}^{2} n^{-2}\left(\log n\right)^{2d}-2C_{4} n^{-1}\left(\log n\right)^{d} \left\lVert \triangledown_{\theta}\mathcal{L}_{P}(\theta_{k})\right\rVert_{\ell_{2}}.
    \end{equation}
    By substituting~(\ref{eq7}) into~(\ref{eq6}), we complete the proof.
  \end{proof}
  \begin{theorem} \label{thm36}
    Under the same assumption given in Lemma~\ref{lm36}, for solving the Poisson equation~(\ref{eq1}), if the iteration stepsize $\alpha$ is a constant satisfying $0<\alpha < 2/L_{0}$ and $f\in C^{d}(\Omega)$, then the training error of DRM-QMC satisfies
    \begin{equation*}
      \Delta \mathcal{L}_{QMCtra}=O\left(n^{-1}(\log n)^{d} \right).
    \end{equation*}
  \end{theorem}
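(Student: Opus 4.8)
The plan is to run the same argument as in the proof of Theorem~\ref{thm35}, but with the deterministic bias estimates of Lemma~\ref{lm38} playing the role of the variance bound in Lemma~\ref{lm37}. Because a low discrepancy sequence is deterministic, the iterates $\theta_{k}$ generated by $\theta_{k+1}=\theta_{k}-\alpha G_{P}(\theta_{k},\xi_{k})$ are deterministic as well, so no expectations or conditional variances are needed. First I would apply the quadratic upper bound (\ref{eq5}) with $\overline{\theta}=\theta_{k}$, $\theta=\theta_{k+1}$ and substitute $\theta_{k+1}-\theta_{k}=-\alpha G_{P}(\theta_{k},\xi_{k})$, which yields
\begin{equation*}
  \mathcal{L}_{P}(\theta_{k+1})-\mathcal{L}_{P}(\theta_{k})\leqslant-\alpha\,\triangledown_{\theta}\mathcal{L}_{P}(\theta_{k})^{T}G_{P}(\theta_{k},\xi_{k})+\frac{\alpha^{2}L_{0}}{2}\left\lVert G_{P}(\theta_{k},\xi_{k})\right\rVert_{\ell_{2}}^{2}.
\end{equation*}

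Next I would insert Lemma~\ref{lm38}. Part~(ii) bounds $\triangledown_{\theta}\mathcal{L}_{P}(\theta_{k})^{T}G_{P}(\theta_{k},\xi_{k})$ from below by $\left\lVert\triangledown_{\theta}\mathcal{L}_{P}(\theta_{k})\right\rVert_{\ell_{2}}^{2}-C_{4}n^{-1}(\log n)^{d}\left\lVert\triangledown_{\theta}\mathcal{L}_{P}(\theta_{k})\right\rVert_{\ell_{2}}$, and part~(i) together with the triangle inequality bounds $\left\lVert G_{P}(\theta_{k},\xi_{k})\right\rVert_{\ell_{2}}^{2}$ from above by $\left\lVert\triangledown_{\theta}\mathcal{L}_{P}(\theta_{k})\right\rVert_{\ell_{2}}^{2}+2C_{4}n^{-1}(\log n)^{d}\left\lVert\triangledown_{\theta}\mathcal{L}_{P}(\theta_{k})\right\rVert_{\ell_{2}}+C_{4}^{2}n^{-2}(\log n)^{2d}$. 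Combining the two produces
\begin{equation*}
  \mathcal{L}_{P}(\theta_{k+1})-\mathcal{L}_{P}(\theta_{k})\leqslant\left(\frac{\alpha^{2}L_{0}}{2}-\alpha\right)\left\lVert\triangledown_{\theta}\mathcal{L}_{P}(\theta_{k})\right\rVert_{\ell_{2}}^{2}+(\alpha+\alpha^{2}L_{0})C_{4}n^{-1}(\log n)^{d}\left\lVert\triangledown_{\theta}\mathcal{L}_{P}(\theta_{k})\right\rVert_{\ell_{2}}+\frac{\alpha^{2}L_{0}C_{4}^{2}}{2}n^{-2}(\log n)^{2d}.
\end{equation*}
Since $\Theta$ is compact and, by the explicit formula for $\triangledown_{\theta}\mathcal{L}_{P}(\theta)$ derived in the proof of Theorem~\ref{thm34} together with the uniform bounds on $v$, $\triangledown_{x}v$, $\triangledown_{\theta}v$, $\triangledown_{\theta}\triangledown_{x}v$ and the boundedness of $f$, the quantity $\left\lVert\triangledown_{\theta}\mathcal{L}_{P}(\theta_{k})\right\rVert_{\ell_{2}}$ is bounded by a constant $B_{5}$ uniformly in $k$; hence the middle term is at most $(\alpha+\alpha^{2}L_{0})C_{4}B_{5}\,n^{-1}(\log n)^{d}$, and since $n^{-2}(\log n)^{2d}=O(n^{-1}(\log n)^{d})$ the last two terms are together $O(n^{-1}(\log n)^{d})$.

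Then I would apply the strong convexity estimate of Lemma~\ref{lm36}, namely $\left\lVert\triangledown_{\theta}\mathcal{L}_{P}(\theta_{k})\right\rVert_{\ell_{2}}^{2}\geqslant2c\left(\mathcal{L}_{P}(\theta_{k})-\mathcal{L}_{P}(\theta^{*})\right)$, and use $\alpha^{2}L_{0}/2-\alpha<0$ (which holds because $0<\alpha<2/L_{0}$) to obtain
\begin{equation*}
  \mathcal{L}_{P}(\theta_{k+1})-\mathcal{L}_{P}(\theta^{*})\leqslant\left(\alpha^{2}L_{0}c-2\alpha c+1\right)\left(\mathcal{L}_{P}(\theta_{k})-\mathcal{L}_{P}(\theta^{*})\right)+C_{7}n^{-1}(\log n)^{d}
\end{equation*}
for an appropriate constant $C_{7}>0$. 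Exactly as in Theorem~\ref{thm35}, $0<\alpha<2/L_{0}$ and $0<c<L_{0}$ imply $\rho\coloneqq\alpha^{2}L_{0}c-2\alpha c+1\in(0,1)$, so iterating this scalar recursion and letting $k\to\infty$ gives $\lim_{k\to\infty}\left(\mathcal{L}_{P}(\theta_{k})-\mathcal{L}_{P}(\theta^{*})\right)\leqslant\frac{C_{7}}{1-\rho}\,n^{-1}(\log n)^{d}$. Because $v(\cdot;\theta)$ and $\theta$ are in one-to-one correspondence, $u_{\mathcal{F}}=v(\cdot;\theta^{*})$ and $\mathcal{L}(u_{\mathcal{F}})=\mathcal{L}_{P}(\theta^{*})$, whence $\Delta\mathcal{L}_{QMCtra}=\lim_{k\to\infty}\mathcal{L}(u^{(k)})-\mathcal{L}(u_{\mathcal{F}})=O(n^{-1}(\log n)^{d})$. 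The static Schr\"{o}dinger equation~(\ref{eq2}) is handled verbatim after recording the obvious analogues of Lemmas~\ref{lm35} and~\ref{lm38}.

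The computations are routine once Lemma~\ref{lm38} is available, and the deterministic nature of the QMC iterates actually makes this part simpler than the Monte Carlo case, where expectations and variances were unavoidable. I expect the only delicate point to be the bookkeeping around the cross term $n^{-1}(\log n)^{d}\left\lVert\triangledown_{\theta}\mathcal{L}_{P}(\theta_{k})\right\rVert_{\ell_{2}}$: without the uniform gradient bound $B_{5}$ (or an equivalent absorption via Young's inequality) it could in principle contaminate the contraction factor, so that bound must be used, and, as in Theorem~\ref{thm35}, one must check that the resulting recursion coefficient $\rho$ lies strictly in $(0,1)$.
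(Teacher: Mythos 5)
Your proposal is correct and follows essentially the same route as the paper's own proof: the descent inequality~(\ref{eq5}), Lemma~\ref{lm38}~(i)--(ii) to control the gradient estimator, the uniform gradient bound (your $B_{5}$ is the paper's $M_{P}$), strong convexity via Lemma~\ref{lm36}, and the contraction recursion with factor $\alpha^{2}L_{0}c-2\alpha c+1\in(0,1)$. The only cosmetic difference is that the paper absorbs the perturbation into a shifted quantity $r(n)$ and asserts the limit equals $r(n)$, whereas you iterate the scalar recursion to get the geometric-series bound $C_{7}/(1-\rho)$; both yield the stated $O\left(n^{-1}(\log n)^{d}\right)$.
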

  \begin{proof}
    Using Lemma~\ref{lm38}~$(i)$, we have
    \begin{equation*}
      \left\lVert G_{P}(\theta_{k},\xi_{k})\right\rVert_{\ell_{2}}^{2} \leqslant \left\lVert \triangledown_{\theta}\mathcal{L}_{P}(\theta_{k})\right\rVert_{\ell_{2}}^{2}+C_{4}^{2} n^{-2}\left(\log n\right)^{2d}+2C_{4} n^{-1}\left(\log n\right)^{d} \left\lVert \triangledown_{\theta}\mathcal{L}_{P}(\theta_{k})\right\rVert_{\ell_{2}}.
    \end{equation*}
    Based on the proof of Theorem~\ref{thm34} and $f\in C^{d}(\Omega)$, we have that $\left\lVert \triangledown_{\theta}\mathcal{L}_{P}(\theta)\right\rVert_{\ell_{2}}$ is bounded in $\Theta$. Hence, let $M_{P}\coloneqq \sup_{\theta \in \Theta} \left\lVert \triangledown_{\theta}\mathcal{L}_{P}(\theta)\right\rVert_{\ell_{2}}$ and there holds
    \begin{eqnarray*}
      &\mspace{21mu}&\mathcal{L}_{P}(\theta_{k+1})-\mathcal{L}_{P}(\theta_{k})\\
      &\leqslant& -\alpha \triangledown_{\theta}\mathcal{L}_{P}(\theta_{k})^{T} G_{P}(\theta_{k},\xi_{k}) +\frac{\alpha^{2}L_{0}}{2} \left\lVert G_{P}(\theta_{k},\xi_{k})\right\rVert_{\ell_{2}}^{2} \\
      & \leqslant &-\alpha \left(\left\lVert \triangledown_{\theta}\mathcal{L}_{P}(\theta_{k})\right\rVert_{\ell_{2}}^{2}-C_{4} n^{-1}\left(\log n\right)^{d} \left\lVert \triangledown_{\theta}\mathcal{L}_{P}(\theta_{k})\right\rVert_{\ell_{2}}\right)\\
      & \mspace{21mu} &+\frac{\alpha^{2}L_{0}}{2}\left(\left\lVert \triangledown_{\theta}\mathcal{L}_{P}(\theta_{k})\right\rVert_{\ell_{2}}^{2}+C_{4}^{2} n^{-2}\left(\log n\right)^{2d}+2C_{4} n^{-1}\left(\log n\right)^{d} \left\lVert \triangledown_{\theta}\mathcal{L}_{P}(\theta_{k})\right\rVert_{\ell_{2}}\right) \\
      & \leqslant &\left(\frac{\alpha^{2}L_{0}}{2}-\alpha\right) \left\lVert \triangledown_{\theta}\mathcal{L}_{P}(\theta_{k})\right\rVert_{\ell_{2}}^{2}+\frac{\alpha^{2}L_{0}C_{4}^{2}}{2}  n^{-2}\left(\log n\right)^{2d} \\
      & & + \left(\alpha^{2}L_{0}+\alpha\right) C_{4} M_{P}  n^{-1}\left(\log n\right)^{d} \\
      & \leqslant & \left(\alpha^{2}L_{0}c-2\alpha c\right) \left(\mathcal{L}_{P}(\theta_{k})-\mathcal{L}_{P}(\theta^{*})\right)+\frac{\alpha^{2}L_{0}C_{4}^{2}}{2}  n^{-2}\left(\log n\right)^{2d} \\
      & &+ \left(\alpha^{2}L_{0}+\alpha\right) C_{4} M_{P}  n^{-1}\left(\log n\right)^{d}.
    \end{eqnarray*}
    Proceeding as in the proof of Theorem~\ref{thm35}, we denote
    \begin{equation*}
      r(n)\coloneqq \frac{1}{4c-2\alpha L_{0} c} \left(\alpha L_{0} C_{4}^{2}n^{-2}\left(\log n\right)^{2d}+\left(2\alpha L_{0}+2\right)C_{4} M_{P}n^{-1}\left(\log n\right)^{d}\right).
    \end{equation*}
    Then we have
    \begin{equation*}
      \mathcal{L}_{P}(\theta_{k+1})-\mathcal{L}_{P}(\theta^{*})-r(n) \leqslant \left(\alpha^{2}L_{0}c-2\alpha c+1\right) \left(\mathcal{L}_{P}(\theta_{k})-\mathcal{L}_{P}(\theta^{*})-r(n)\right).
    \end{equation*}
    Note that $0<\alpha < 2/L_{0}$ and $c<L_{0}$ indicate $0< \alpha^{2}L_{0} c-2 \alpha c+1<1$, which guarantees the convergence. Taking $k$ to infinity, we obtain
    \begin{equation*}
      \lim_{k \to \infty} \mathcal{L}_{P}(\theta_{k})-\mathcal{L}_{P}(\theta^{*})=r(n)=O\left(n^{-1}\left(\log n\right)^{d}\right).
    \end{equation*}
  \end{proof}

  Now we turn to the training error for solving the static Schr\"{o}dinger equation~(\ref{eq2}). Based on the similar ideas as above, we need to prove the Lipschitz continuity only. Recall the loss functional with respect to the static Schr\"{o}dinger equation~(\ref{eq2}) are
  \begin{equation*}
    \mathcal{L}_{S}(\theta)\coloneqq\int\nolimits_{\Omega}\left(\frac{1}{2}\left\lVert \triangledown_{x} v(x;\theta)\right\rVert_{\ell_{2}}^{2} +\frac{1}{2}V(x)\left\lvert v(x;\theta)\right\rvert^{2}-g(x)v(x;\theta)\right)\mathrm{d}x.
  \end{equation*}
  \begin{theorem}
    If the function $g$ is bounded in $\Omega$, then the gradient function of $\mathcal{L}_{S}(\theta)$ is Lipschitz continuous with Lipschitz constant $L_{7}>0$, i.e., $\forall \theta, \overline{\theta} \in \Theta$,
    \begin{equation*}
      \left\lVert \triangledown_{\theta}\mathcal{L}_{S}(\theta)-\triangledown_{\theta}\mathcal{L}_{S}(\overline{\theta})\right\rVert_{\ell_{2}}\leqslant L_{7}\left\lVert \theta - \overline{\theta} \right\rVert_{\ell_{2}}.
    \end{equation*}
  \end{theorem}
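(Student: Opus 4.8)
The plan is to follow the proof of Theorem~\ref{thm34} almost verbatim, with the quadratic penalty term replaced by the potential term $\frac{1}{2}V(x)\lvert v(x;\theta)\rvert^{2}$. First I would interchange integration and differentiation---legitimate because $v(\cdot;\theta)$ is smooth in $x$ and $\theta$ on the compact sets $\Omega$ and $\Theta$---to obtain
\begin{equation*}
  \triangledown_{\theta}\mathcal{L}_{S}(\theta)=\int_{\Omega}\left(\triangledown_{\theta}\triangledown_{x}v(x;\theta)\,\triangledown_{x}v(x;\theta)+V(x)v(x;\theta)\triangledown_{\theta}v(x;\theta)-g(x)\triangledown_{\theta}v(x;\theta)\right)\mathrm{d}x.
\end{equation*}
Then, for $\theta,\overline{\theta}\in\Theta$, I would bound $\left\lVert\triangledown_{\theta}\mathcal{L}_{S}(\theta)-\triangledown_{\theta}\mathcal{L}_{S}(\overline{\theta})\right\rVert_{\ell_{2}}$ by the triangle inequality, splitting it into the gradient-energy contribution, the potential contribution, and the source contribution.

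The gradient-energy contribution and the source contribution are handled exactly as in Theorem~\ref{thm34}: inserting $\triangledown_{\theta}\triangledown_{x}v(x;\overline{\theta})\triangledown_{x}v(x;\theta)$ and using the uniform bounds $B_{2}$ (for $\triangledown_{x}v$) and $B_{4}$ (for $\triangledown_{\theta}\triangledown_{x}v$) together with the Lipschitz estimates $L_{4},L_{5}$ of Lemma~\ref{lm35} gives a term $(B_{2}L_{5}+B_{4}L_{4})\left\lVert\theta-\overline{\theta}\right\rVert_{\ell_{2}}$, while the boundedness of $g$ and the constant $L_{3}$ of Lemma~\ref{lm35} give a term $L_{3}\max_{x\in\Omega}\lvert g(x)\rvert\left\lVert\theta-\overline{\theta}\right\rVert_{\ell_{2}}$.

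The only genuinely new piece is the potential term, for which I would use the product-rule decomposition
\begin{align*}
  V(x)v(x;\theta)\triangledown_{\theta}v(x;\theta)-V(x)v(x;\overline{\theta})\triangledown_{\theta}v(x;\overline{\theta})
  &=V(x)v(x;\theta)\left(\triangledown_{\theta}v(x;\theta)-\triangledown_{\theta}v(x;\overline{\theta})\right)\\
  &\quad+V(x)\triangledown_{\theta}v(x;\overline{\theta})\left(v(x;\theta)-v(x;\overline{\theta})\right),
\end{align*}
and then bound the $\ell_{2}$-norm of its integral using $0<V(x)\leqslant V_{max}$, the uniform bounds $\lvert v\rvert\leqslant B_{1}$ (Lemma~\ref{lm32}) and $\left\lVert\triangledown_{\theta}v\right\rVert_{\ell_{2}}\leqslant B_{3}$ (from the proof of Lemma~\ref{lm35}), and the Lipschitz estimates $\lvert v(x;\theta)-v(x;\overline{\theta})\rvert\leqslant L_{1}\left\lVert\theta-\overline{\theta}\right\rVert_{\ell_{2}}$ and $\left\lVert\triangledown_{\theta}v(x;\theta)-\triangledown_{\theta}v(x;\overline{\theta})\right\rVert_{\ell_{2}}\leqslant L_{3}\left\lVert\theta-\overline{\theta}\right\rVert_{\ell_{2}}$ of Lemmas~\ref{lm32} and~\ref{lm35}; this contributes $V_{max}(B_{1}L_{3}+B_{3}L_{1})\left\lVert\theta-\overline{\theta}\right\rVert_{\ell_{2}}$. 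Collecting the three contributions, the statement holds with $L_{7}=B_{2}L_{5}+B_{4}L_{4}+V_{max}(B_{1}L_{3}+B_{3}L_{1})+L_{3}\max_{x\in\Omega}\lvert g(x)\rvert$. There is no real obstacle here---the computation is a routine adaptation of Theorem~\ref{thm34}---and the only point worth noting is that this Lipschitz estimate needs only $V\in L^{\infty}(\Omega)$ with the stated two-sided bound, rather than the extra smoothness $V\in C^{d}(\Omega)$ used in the generalization-error analysis.
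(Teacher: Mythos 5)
Your proposal is correct and follows essentially the same route as the paper: the paper likewise isolates the potential term, bounds $\triangledown_{\theta}\left(V(x)\left\lvert v(x;\theta)\right\rvert^{2}\right)$ via the same cross-term insertion using $V_{max}$, $B_{1}$, $B_{3}$, $L_{1}$, $L_{3}$, and then declares the remaining terms identical to the proof of Theorem~\ref{thm34}. Your explicit constant $L_{7}$ and the observation that only $V\in L^{\infty}(\Omega)$ is needed are both consistent with the paper's argument.
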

  \begin{proof}
    Using Lemmas~\ref{lm32} and~\ref{lm35}, we have
    \begin{eqnarray*}
      &\mspace{21mu}& \left\lVert\triangledown_{\theta}\left(V(x)\left\lvert v(x;\theta)\right\rvert^{2}\right)-\triangledown_{\theta}\left(V(x)\left\lvert v(x;\overline{\theta})\right\rvert^{2}\right)\right\rVert_{\ell_{2}}              \\
      &=&\left\lVert V(x)\left(2v(x;\theta)\triangledown_{\theta}v(x;\theta)-2v(x;\overline{\theta})\triangledown_{\theta}v(x;\overline{\theta})\right)\right\rVert_{\ell_{2}} \\
      &\leqslant& 2V_{max} \left\lVert v(x;\theta)\triangledown_{\theta}v(x;\theta)-v(x;\overline{\theta})\triangledown_{\theta}v(x;\overline{\theta})\right\rVert_{\ell_{2}}
      \end{eqnarray*}
      and
      \begin{eqnarray*}
      &\mspace{21mu}& \left\lVert v(x;\theta)\triangledown_{\theta}v(x;\theta)-v(x;\overline{\theta})\triangledown_{\theta}v(x;\overline{\theta})\right\rVert_{\ell_{2}} \\
      &\leqslant &\left\lVert v(x;\theta)\triangledown_{\theta}v(x;\theta)-v(x;\theta)\triangledown_{\theta}v(x;\overline{\theta})\right\rVert_{\ell_{2}}+ \left\lVert v(x;\theta)\triangledown_{\theta}v(x;\overline{\theta})-v(x;\overline{\theta})\triangledown_{\theta}v(x;\overline{\theta})\right\rVert_{\ell_{2}}   \\
      &\leqslant&  B_{1} \left\lVert \triangledown_{\theta}v(x;\theta)- \triangledown_{\theta}v(x;\overline{\theta})\right\rVert_{\ell_{2}}+ B_{3} \left\lvert v(x;\theta)-v(x;\overline{\theta})\right\rvert    \\
      &\leqslant& B_{1} L_{3} \left\lVert\theta - \overline{\theta} \right\rVert_{\ell_{2}}+B_{3}L_{1} \left\lVert\theta - \overline{\theta} \right\rVert_{\ell_{2}}.   \\
    \end{eqnarray*}
    Hence, $\triangledown_{\theta}(V(x)\left\lvert v(x;\theta)\right\rvert^{2})$ is Lipschitz continuous with respect to $\theta$ for all $x \in \Omega$. The rest of the proof is quite similar to that given earlier for the Lipschitz continuity of $\triangledown_{\theta}\mathcal{L}_{P}(\theta)$ and is omitted.
  \end{proof}
  Translating the previous assumptions to the case of the static Schr\"{o}dinger equation~(\ref{eq2}), we  have the following theorem on the training error.
  \begin{theorem}\label{thm38}
    \begin{enumerate}
      \item[(i)] For solving the static Schr\"{o}dinger equation~(\ref{eq2}), under the same assumptions given in Lemma~\ref{lm36} replacing $\mathcal{L}_{P}(\theta)$ with $\mathcal{L}_{S}(\theta)$, if the iteration stepsize $\alpha$ is a constant satisfying $0<\alpha < 2/L_{7}$, then the training error of DRM satisfies
      \begin{equation*}
        \Delta \mathcal{L}_{MCtra} = O\left(n^{-1}\right).
      \end{equation*}
      \item[(ii)] For solving the static Schr\"{o}dinger equation~(\ref{eq2}), under the same assumption given in Lemma~\ref{lm36} replacing $\mathcal{L}_{P}(\theta)$ with $\mathcal{L}_{S}(\theta)$, if the iteration stepsize $\alpha$ is a constant satisfying $0<\alpha < 2/L_{7}$ and $V\in C^{d}(\Omega)$, $g\in C^{d}(\Omega)$, then the training error of DRM-QMC satisfies
      \begin{equation*}
        \Delta \mathcal{L}_{QMCtra}=O\left(n^{-1}(\log n)^{d}\right).
      \end{equation*}
    \end{enumerate}
  \end{theorem}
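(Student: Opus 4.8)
The plan is to mirror the arguments developed for the Poisson equation in Theorems~\ref{thm35} and~\ref{thm36}, since $\mathcal{L}_{S}(\theta)$ differs from $\mathcal{L}_{P}(\theta)$ only in that the term $\frac{1}{2}(\int_{\Omega} v(x;\theta)\,\mathrm{d}x)^{2}$ is replaced by $\frac{1}{2}\int_{\Omega} V(x)\lvert v(x;\theta)\rvert^{2}\,\mathrm{d}x$, and the Lipschitz continuity of $\triangledown_{\theta}\mathcal{L}_{S}(\theta)$ with constant $L_{7}$ has already been established. Write $G_{S}(\theta_{k},\xi_{k})\coloneqq\triangledown_{\theta}[\,n^{-1}\sum_{j=1}^{n}\mu_{S}(X_{j,k};\theta_{k})\,]$ for the mini-batch gradient estimator used in Algorithm~\ref{alg1}. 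Because $\mathcal{L}_{n,S}$ is now a genuine empirical mean of $\mu_{S}(\cdot;\theta)$, one has $\mathbb{E}[G_{S}(\theta_{k},\xi_{k})\mid\theta_{k}]=\triangledown_{\theta}\mathcal{L}_{S}(\theta_{k})$ for i.i.d. uniform samples, so the descent recursion~(\ref{eq5}) can be applied in exactly the same way as before.

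For part~(i), I would first prove the Schr\"{o}dinger counterpart of Lemma~\ref{lm37}: since $\triangledown_{\theta}\mu_{S}(X;\theta_{k})$ is a plain function of $X$, the mini-batch mean satisfies $\mathrm{trace}(\mathrm{Var}[G_{S}(\theta_{k},\xi_{k})\mid\theta_{k}])=h_{S}(\theta_{k})/n$ with $h_{S}(\theta_{k})=\mathrm{trace}(\mathrm{Var}[\triangledown_{\theta}\mu_{S}(X;\theta_{k})\mid\theta_{k}])$, and the same induction on $k$ through the Taylor expansion of $h_{S}$ along the update rule gives $\mathbb{E}[\mathrm{trace}(\mathrm{Var}[G_{S}(\theta_{k},\xi_{k})\mid\theta_{k}])]=O(n^{-1})$; the required boundedness of $h_{S}$ and its derivatives follows from Lemmas~\ref{lm32} and~\ref{lm35} together with $V\in L^{\infty}(\Omega)$ and $g$ bounded. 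With this lemma in hand, the chain of inequalities in the proof of Theorem~\ref{thm35} --- based on~(\ref{eq5}), the unbiasedness above, the strong convexity of $\mathcal{L}_{S}$ (Lemma~\ref{lm36} with $\mathcal{L}_{P}$ replaced by $\mathcal{L}_{S}$), and $0<\alpha<2/L_{7}$ --- carries over verbatim, yielding $\Delta\mathcal{L}_{MCtra}=O(n^{-1})$.

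For part~(ii), I would prove the Schr\"{o}dinger counterpart of Lemma~\ref{lm38}. Exchanging integration and differentiation gives $\triangledown_{\theta}\mathcal{L}_{S}(\theta)=\int_{\Omega}(\triangledown_{\theta}\triangledown_{x} v\,\triangledown_{x} v+V v\,\triangledown_{\theta} v-g\,\triangledown_{\theta} v)\,\mathrm{d}x$, and $G_{S}(\theta_{k},\xi_{k})$ is precisely the corresponding empirical mean, so the Koksma-Hlawka inequality bounds $\lVert G_{S}(\theta_{k},\xi_{k})-\triangledown_{\theta}\mathcal{L}_{S}(\theta_{k})\rVert_{\ell_{2}}$ by $V_{HK}(\triangledown_{\theta}\mu_{S}(\cdot;\theta_{k}))\,D_{n}^{*}$. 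It then suffices to bound $V_{HK}(\triangledown_{\theta}\triangledown_{x} v\,\triangledown_{x} v)$, $V_{HK}(g\,\triangledown_{\theta} v)$ and $V_{HK}(V v\,\triangledown_{\theta} v)$ uniformly in $\theta\in\Theta$: the first two are handled exactly as in the proof of Lemma~\ref{lm38} using Lemma~\ref{lm34} and $g\in C^{d}(\Omega)$, while the third follows because $V\in C^{d}(\Omega)$ and $v(\cdot;\theta),\triangledown_{\theta} v(\cdot;\theta)$ lie in $C^{d}(\Omega)$ with all mixed partial derivatives in $x$ bounded uniformly in $\theta$ (the chain-rule argument of Lemmas~\ref{lm34} and~\ref{lm35}), so that a product of finitely many such functions again has Hardy-Krause variation bounded uniformly in $\theta$. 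This yields $\lVert G_{S}(\theta_{k},\xi_{k})-\triangledown_{\theta}\mathcal{L}_{S}(\theta_{k})\rVert_{\ell_{2}}\leqslant C\,n^{-1}(\log n)^{d}$ with $C$ independent of $k$ and $n$, together with the accompanying lower bound on $\triangledown_{\theta}\mathcal{L}_{S}(\theta_{k})^{T} G_{S}(\theta_{k},\xi_{k})$ as in Lemma~\ref{lm38}~(ii). Feeding these bounds into the descent recursion and introducing a remainder $r_{S}(n)=O(n^{-1}(\log n)^{d})$ exactly as in the proof of Theorem~\ref{thm36} gives $\lim_{k\to\infty}(\mathcal{L}_{S}(\theta_{k})-\mathcal{L}_{S}(\theta^{*}))=r_{S}(n)=O(n^{-1}(\log n)^{d})$.

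The main obstacle --- and really the only place anything new happens --- is controlling the extra term $\frac{1}{2}V\lvert v\rvert^{2}$, whose $\theta$-gradient is $V v\,\triangledown_{\theta} v$: part~(i) needs its Lipschitz continuity in $\theta$ uniformly on the compact set $\Theta$ (the content of the Lipschitz theorem just above, via Lemmas~\ref{lm32} and~\ref{lm35}), and part~(ii) needs the uniform boundedness of its Hardy-Krause variation. The latter is exactly where the extra hypotheses $V,g\in C^{d}(\Omega)$ are used: it relies on the product rule for $V_{HK}$ and on uniform control over $\Theta$ of all mixed partial derivatives in $x$ of $v$ and $\triangledown_{\theta} v$. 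Once these two facts are established, both statements follow from the Poisson arguments with no genuinely new estimate.
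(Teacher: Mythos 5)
Your proposal is correct and follows essentially the same route the paper intends: the paper itself omits the proof of this theorem, stating only that it "can be proved in the same way as before," i.e., by repeating the arguments of Theorems~\ref{thm35} and~\ref{thm36} with $\mathcal{L}_{P}$ replaced by $\mathcal{L}_{S}$, using the Lipschitz constant $L_{7}$ and the Hardy--Krause bound of Lemma~\ref{coro31}~(ii). Your elaboration correctly identifies the only genuinely new point --- the uniform control of the term $Vv\,\triangledown_{\theta}v$, which is exactly where the hypotheses $V,g\in C^{d}(\Omega)$ enter --- so it matches the paper's intended argument.
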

  This theorem can be proved in the same way as before, so the proof will not be reproduced here.
  \begin{rem}\label{remark33}
    If the strongly convex constant $c$ satisfies $c=L_{0}$, then we establish the convergence order in two cases of the fixed stepsize $\alpha$.
    \begin{enumerate}
      \item[(i)] If $\alpha\neq 1/L_{0}$, then the results in Theorems~\ref{thm35},~\ref{thm36} and~\ref{thm38} also hold.
      \item[(ii)] If $\alpha= 1/L_{0}$, then the training errors obtained in Theorems~\ref{thm35},~\ref{thm36} and~\ref{thm38} become upper bounds on the corresponding training error, namely the $"="$ becomes $"\leqslant"$.
    \end{enumerate}
  \end{rem}
  \begin{rem}
    It should be acknowledged that the strong convexity assumption of the loss functional in Lemma~\ref{lm36} does not always hold and it is difficult to verify whether the strong convexity assumption holds. Such assumption is usually essential in gradient-based method. For the non-convex case, we can consider the nonconvex optimization methods in~\cite{Chen1,Fehrman} and we leave it for future research. However, it is convincing that the training error of DRM-QMC must be no worse than that of DRM in all cases due to the use of low discrepancy sequence.
  \end{rem}
  
  \subsection{Comparison of the total errors}
  We have established the generalization error bound and the training error with respect to different sampling strategies. Combining with Theorem~\ref{thm21} and the triangle inequality, we obtain the upper bounds on the difference between the limit of the algorithm output and the unique weak solution. Under the settings of the previous theorems, we can summarize the results of the generalization error and the training error in the following table.

  \begin{table}[htbp]
    \centering
    \caption{\centering{Theoretical results.}}
    \label{tab1}
  \begin{tabular}{|l|l|l|}
    \hline
    \multicolumn{1}{|c|}{}
    &\multicolumn{1}{|c|}{generalization error}
    &\multicolumn{1}{|c|}{training error}\\\hline
    \makecell{DRM}  &$\leqslant O\left(n^{-1/2}(\log n)^{1/2}\right)$  &$=O\left(n^{-1}\right)$   \\\hline
    \makecell{DRM-QMC}  &$\leqslant O\left(n^{-1}(\log n)^{d}\right)$  &$= O\left(n^{-1}(\log n)^{d}\right)$ \\\hline
  \end{tabular}
  \end{table}

  Since the approximation error is independent of the sampling strategy, it is not presented in Table~\ref{tab1}. In conclusion, DRM-QMC is asymptotically better than DRM in terms of the generalization error bound, and the order of the training error of DRM-QMC is asymptotically equal to that of DRM. To compare the total error with respect to different sampling strategies, we consider the dominant term of the total error. For DRM, it is $O\left(n^{-1/2}(\log n)^{1/2}\right)$. For DRM-QMC, it is $O\left(n^{-1}(\log n)^{d}\right)$. Hence, for DRM, the generalization error dominates the total error regardless of the approximation error, while the generalization error measures the error incurred by the quadrature method. Therefore, modifying the quadrature method, that is, replacing MC with QMC, can bring great accuracy improvements. As for the lower bounds, Lu et al.~\cite{Lu1} establish a lower bound for the case of the static Schr\"{o}dinger equation with the Dirichlet boundary condition. That is, for a given $s\in \mathbb{N}^{+}$, 
  \begin{equation*}
      \mathop{\inf}_{v \in \mathcal{F}}\mathop{\sup}_{u^{*} \in H^{s}(\Omega)}\mathbb{E}\left\lVert v-u^{*} \right\lVert_{H^{1}(\Omega)}^{2}\gtrsim n^{-\frac{2s-2}{d+2s-4}}.
  \end{equation*}
  If $s=1$, the error will not decrease below a certain constant independent of the mini-batch size in the training process. This phenomenon will be demonstrated in our numerical experiments.

  From the theoretical results, DRM-QMC achieves an asymptotically smaller total error bound than DRM with or without the assumption of strong convexity, which means DRM-QMC can be expected to be more accurate than DRM. Moreover, to verify the better efficiency of DRM-QMC over DRM, we perform some numerical experiments in Section~\ref{sec4}.

\section{Numerical Experiments} \label{sec4}
  In our numerical experiments, we pay attention to the convergence rate and stability of the algorithm which may benefit from QMC methods. We apply the DRM combined with different sampling strategies to find the numerical solutions of two problems, which correspond to the two types of PDE problems studied. To avoid the vanishing gradient problem~\cite{He}, we add the residual to the neural network. Specifically, we give the mathematical form of the deep neural network
  \begin{equation*}
    f_{1}(s)=\sigma \circ T_{2} \circ \sigma \circ T_{1}(s)+s, \quad \quad f_{2}(s)=\sigma \circ T_{4} \circ \sigma \circ T_{3}(s)+s,
  \end{equation*}
  \begin{equation*}
    \Downarrow
  \end{equation*}
  \begin{equation*}
    v(x;\theta)= T_{out} \circ f_{2} \circ f_{1} \circ T_{in}(x),
  \end{equation*}
  where $T_{i} : \mathbb{R} ^{4}\rightarrow \mathbb{R}^{4}, s \mapsto A_{i}s_{i}+B_{i}, \; i=1, 2, 3, 4$, $T_{in}: \mathbb{R}^{d}\to \mathbb{R}^{4}, x \mapsto  A_{in}x+B_{in}$, and $T_{out}: \mathbb{R}^{4}\to \mathbb{R}, s \mapsto  A_{out}s+B_{out}$.
  
  To illustrate the difference between the convergence rates of DRM and DRM-QMC, we compare the relative $L_{2}$ errors, i.e.,
  \begin{equation*}
    error_{L_{2}} = \sqrt{\frac{\int_{\Omega}\left(v(x;\theta)-u^{*}(x)\right)^{2}\mathrm{d}x}{\int_{\Omega}u^{*}(x)^{2}\mathrm{d}x} }.
  \end{equation*}
  For a gradient-based optimization algorithm, smaller variance of the gradient estimator means the algorithm is more stable. In this paper, the gradient we considered is vector-valued, so we use the trace of sample covariance matrix of the gradient estimator to measure the stability of the algorithm. Since QMC methods use deterministic points as sample points, we employ randomized quasi-Monte Carlo (RQMC) points to compute the sample covariance matrix, called randomized DRM-QMC. By the central limit theorem, we know that the variance of the MC estimator is $O\left(n^{-1}\right)$, while the variance of the RQMC estimator is $O\left(n^{-2+\epsilon}\right)$ for any $\epsilon >0$ or even $O\left(n^{-k}\right)$, $k>2$, under some conditions~\cite{Owen}.  
  In numerical experiments, we compare the convergence rates of DRM and DRM-QMC with the mini-batch size equaling to 32, 128 or 512, respectively, and the number of iterations is fixed to 10000. Furthermore, we perform 16 repetitions to DRM and randomized DRM-QMC in each step and compare the traces of the sample covariance matrices.  

  First, we consider the following Neumann problem for the Poisson equation.
  \begin{example}\label{example41}
  \begin{equation} \label{eq8}
    \left\{
    \begin{aligned}
      \Delta u&= 2\sum_{k = 1}^{20}\left(x_{k}^{2}-x_{k}\right)^{2}+\sum_{k = 1}^{20}\left(4x_{k}-2\right)\sum_{k = 1}^{20}\left(\frac{x_{k}^{3}}{3}-\frac{x_{k}^{2}}{2}\right), \ &\rm{in} \quad \Omega, \\
      \frac{\partial u}{\partial \boldsymbol{n}}&=0, \ &\rm{on} \ \partial \Omega,
    \end{aligned}
    \right.
  \end{equation}
  where $\Omega=[0,1]^{20}$.
\end{example}
  By routine computation, the unique weak solution satisfying $\int_{\Omega}u^{*}(x)\mathrm{d}x=0$ of~(\ref{eq8}) is
  \begin{equation*}
    u^{*}(x)=\left(\sum_{k=1}^{20}\left(\frac{x_{k}^{3}}{3}-\frac{x_{k}^{2}}{2}\right)\right)^{2}-\frac{717}{252}.
  \end{equation*}
  Based on the idea of DRM, the corresponding variational problem is
  \begin{equation*}
    \mathop{\arg\min}_{\theta \in \Theta}\int_{\Omega}\left(\frac{1}{2}\left\lVert \triangledown_{x} v(x;\theta)\right\rVert_{\ell_{2}}^{2} -f(x) v(x;\theta)\right)\mathrm{d}x+\frac{1}{2}\left( \int\nolimits_{\Omega}v(x;\theta)\mathrm{d}x\right)^{2},
  \end{equation*}
  where 
  \begin{equation*}
    f(x)=-\left(2\sum_{k = 1}^{20}\left(x_{k}^{2}-x_{k}\right)^{2}+\sum_{k = 1}^{20}\left(4x_{k}-2\right)\sum_{k = 1}^{20}\left(\frac{x_{k}^{3}}{3}-\frac{x_{k}^{2}}{2}\right)\right).  
  \end{equation*}

  \begin{figure}[htbp]
  \centering
  \includegraphics[height=4.5cm,width=13cm]{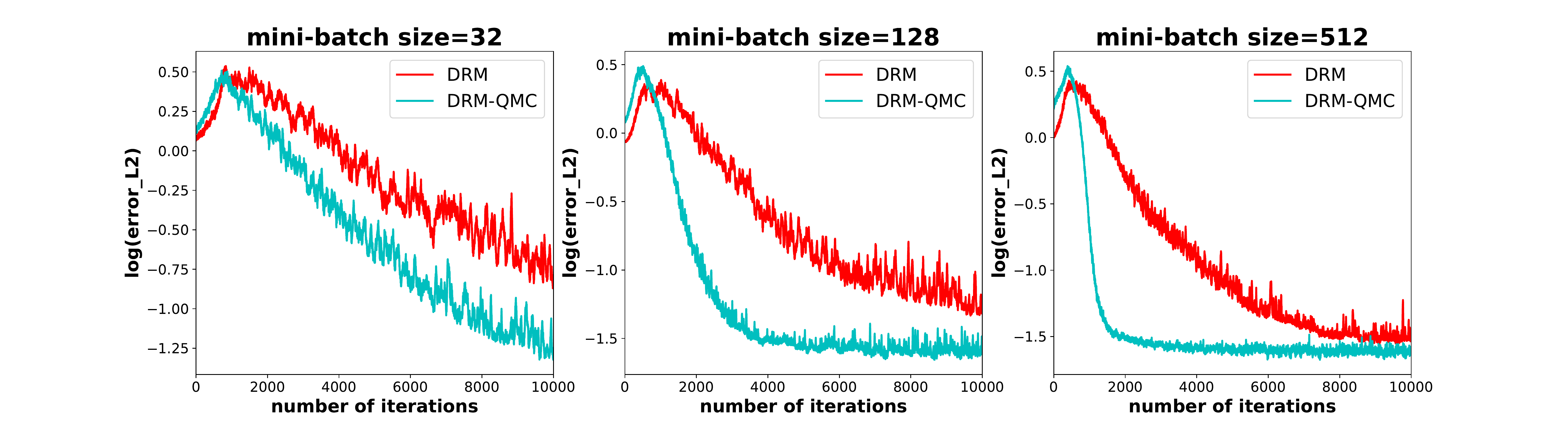}
  \caption{Training processes with respect to different sampling strategies with the same mini-batch size for solving the Poisson equation~(\ref{eq8})}\label{fig1}
\end{figure}
\begin{figure}[htbp]
  \centering
  \includegraphics[height=4.5cm,width=13cm]{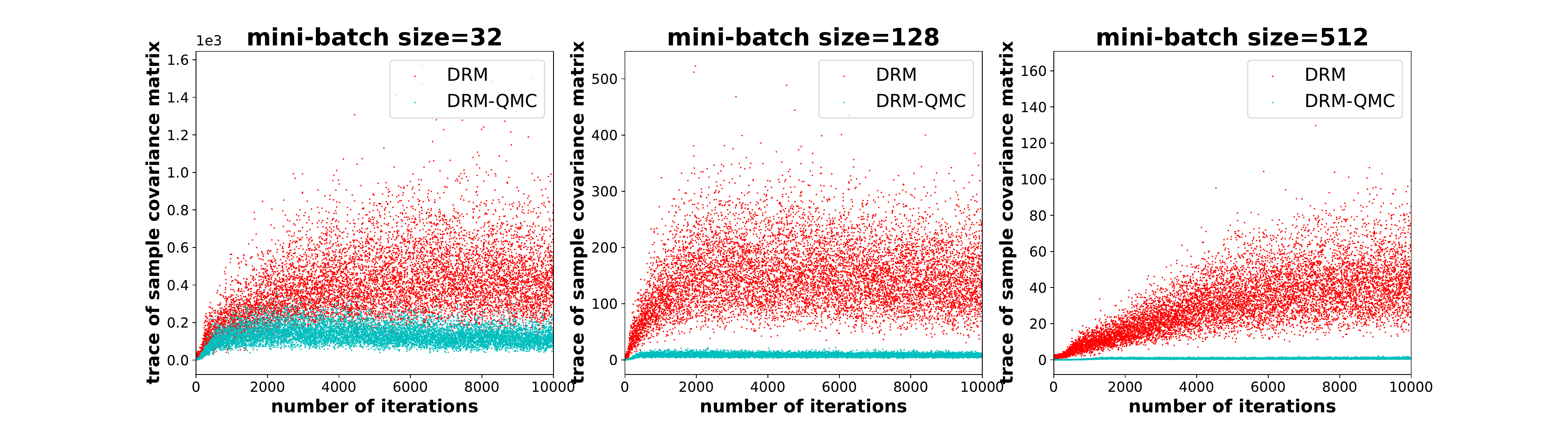}
  \caption{Traces of sample covariance matrices with respect to different sampling strategies with the same mini-batch size for solving the Poisson equation~(\ref{eq8}). Notice that the scaling is different.}\label{fig2}
\end{figure}

  Figure~\ref{fig1} shows the convergence rates of DRM and DRM-QMC with the same mini-batch size. When the mini-batch size equals to 32, neither DRM nor DRM-QMC converges after 10000 iterations, while the minimal error that DRM-QMC can achieve is about 60$\%$ of that DRM can achieve in the training processes. When the mini-batch size equals to 128, DRM-QMC converges after about 4000 iterations, while DRM still does not converge after 10000 iterations. When the mini-batch size equals to 512, DRM-QMC converges after about 2000 iterations and DRM converges after about 8000 iterations. 
  
  Figure~\ref{fig2} presents the stability of DRM and DRM-QMC. The variances of the gradient estimators in randomized DRM-QMC are much smaller than those in DRM. In particular, when the mini-batch size equals to 128 or 512, the variances of the gradient estimators in randomized DRM-QMC are close to 0, while the variances of the gradient estimators in DRM range from dozens to hundreds.
  
  Next, we consider the following Neumann problem for the static Schr\"{o}dinger equation.
  \begin{example}\label{example42}
  \begin{equation} \label{eq9}
    \left\{
    \begin{aligned}
      -\Delta u+\pi^{2}u &=2\pi^{2}\sum_{k = 1}^{20}\cos (\pi x_{k}), \ &\rm{in} \quad \Omega, \\
      \frac{\partial u}{\partial \boldsymbol{n}}&=0, \ &\rm{on} \ \partial \Omega,
    \end{aligned}
    \right.
  \end{equation}
  where $\Omega=[0,1]^{20}$.
\end{example}

  Similarly, we know that the unique weak solution is
  \begin{equation*}
    u^{*}(x)=\sum_{k = 1}^{20}\cos (\pi x_{k})
  \end{equation*}
  and the corresponding variational problem is
  \begin{equation*}
    \mathop{\arg\min}_{\theta\in \Theta}\int_{\Omega}\left(\frac{1}{2}\left\lVert \triangledown_{x} v(x;\theta)\right\rVert_{\ell_{2}}^{2}+\frac{\pi^{2}}{2} \left\lvert v(x;\theta)\right\rvert^{2}-2\pi^{2}\sum_{k = 1}^{20}\cos (\pi x_{k}) v(x;\theta)\right)\mathrm{d}x.
  \end{equation*}

\begin{figure}[htbp]
  \centering
  \includegraphics[height=4.5cm,width=13cm]{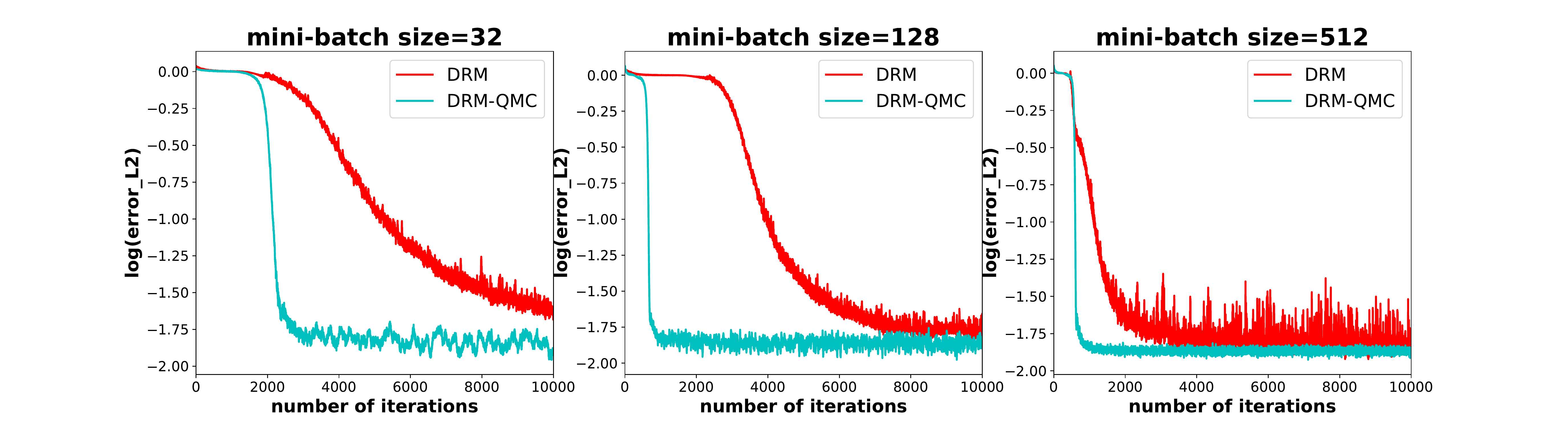}
  \caption{Training processes with respect to different sampling strategies with the same mini-batch size for solving the static Schr\"{o}dinger equation~(\ref{eq9})}\label{fig3}
\end{figure}

\begin{figure}[htbp]
  \centering
  \includegraphics[height=4.5cm,width=13cm]{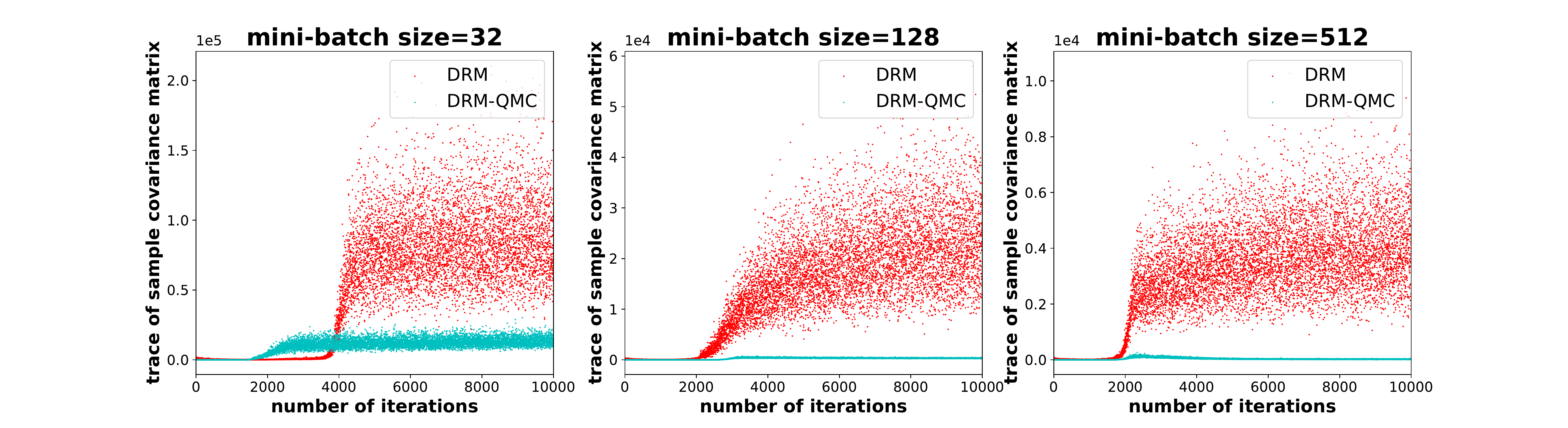}
  \caption{Traces of sample covariance matrices with respect to different sampling strategies with the same mini-batch size for solving the static Schr\"{o}dinger equation~(\ref{eq9}). Notice that the scaling is different.}\label{fig4}
\end{figure}
  
  Figure~\ref{fig3} compares the convergence rates of DRM and DRM-QMC with the same mini-batch size. When the mini-batch size equals to 32, DRM-QMC converges after about 3000 iterations, while DRM does not converge after 10000 iterations. Moreover, DRM-QMC converges after about 1000 iterations when the mini-batch size equals to 128 or 512, while DRM converges after about 8000 iterations when the mini-batch size equals to 128 and after about 4000 iterations when the mini-batch size equals to 512. 
  
  Figure~\ref{fig4} shows the traces of sample covariance matrices with respect to DRM and randomized DRM-QMC. The variances of the gradient estimators in randomized DRM-QMC are closed to 0, compared to the variances of the gradient estimators in DRM which are thousands. 
  
  To be more precise, we compute the ratios of the traces of sample covariance matrices of DRM to those of randomized DRM-QMC in each step and average these ratios. The results for solving the Poisson equation~(\ref{eq8}) and the static Schr\"{o}dinger equation~(\ref{eq9}) are listed in Table~\ref{tab2}.
  \begin{table}[htbp]
    \centering
    \caption{The average ratio of the traces of sample covariance matrices of DRM to that of randomized DRM-QMC.}
    \label{tab2}
  \begin{tabular}{|l|l|l|l|}
    \hline
    \multicolumn{1}{|c|}{}
    &\multicolumn{1}{|c|}{$n=32$}
    &\multicolumn{1}{|c|}{$n=128$}
    &\multicolumn{1}{|c|}{$n=512$}\\\hline
    \makecell{Poisson equation~(\ref{eq8})} &\makecell{3.3}  &\makecell{19.3} &\makecell{53.6}  \\\hline
    \makecell{Static Schr\"{o}dinger equation~(\ref{eq9})}  &\makecell{12.4}  &\makecell{142.3} &\makecell{1052.0} \\\hline
  \end{tabular}
  \end{table}

  By summarizing the results of the numerical experiments, we observe that the proposed algorithm always performs better than the standard algorithm. Specifically, we conclude three advantages of DRM-QMC.
  \begin{enumerate}
    \item[(i)] DRM-QMC converges faster than DRM in all cases we considered. This illustrates the superiority of the proposed algorithm in terms of the convergence rate.
    \item[(ii)] When the training process is stable, DRM-QMC usually achieves smaller error than DRM does. This is consistent with the theoretical error analysis in Section~\ref{sec3}.
    \item[(iii)] The trace of sample covariance matrix of randomized DRM-QMC can be reduced by factors ranging from 3.3 to 1052.0 over DRM. Furthermore, as the mini-batch size increases, the variances of the gradient estimators in randomized DRM-QMC decrease faster than those in DRM, which means DRM-QMC is much more stable than DRM during the training process and the increase in mini-batch size affects the stability to a greater extent for DRM-QMC than for DRM.
  \end{enumerate}
  Chen et al.~\cite{Chen} have shown that DRM-QMC performs better than DRM for solving elliptic PDEs equipped with the Dirichlet boundary condition. We also apply DRM and DRM-QMC to solve some other PDE problems and different constructions of the deep neural network. The results (though not presented here) show that DRM-QMC usually performs better than DRM, especially when the DRM is hard to converge.

  \section{Conclusion}\label{sec5}
  In this paper, we combined DRM with QMC methods and analyzed the effect of different sampling strategies on the DRM. From the aspects of both theoretical results and numerical experiments, we compared the accuracy and efficiency of the proposed algorithm and the standard algorithm.

  Theoretically, the error of using the deep learning algorithms to solve PDEs is decomposed into the generalization error, the approximation error and the training error. Analyzing two types of errors related to the sampling strategy rigorously and summarizing the results, we obtained that DRM-QMC is asymptotically better than DRM in terms of the error bound. Hence, DRM-QMC may give an output that is closer to the exact solution of PDEs than DRM under some conditions, which means DRM-QMC is more accurate than DRM.

  From the results of the numerical experiments, DRM-QMC converges faster and is more stable than DRM. For the same accuracy requirement, DRM-QMC requires fewer iteration steps and fewer sample points in the training process, which means the computation cost will be greatly reduced in practical applications. For small mini-batch size, DRM-QMC can keep the convergence rate at a satisfactory level better than DRM. For large mini-batch size, DRM-QMC presents greater stability improvement than DRM. In general, DRM-QMC performs better than DRM for both small and large mini-batch size.

  DRM and DRM-QMC can be applied to solve other second-order elliptic equations. Moreover, QMC methods can be applied to other deep learning algorithms for solving PDE problems, for example, DGM and PINNs. The study of QMC-based deep learning algorithm combined with nonconvex optimization methods is also an interesting topic. The relevant theoretical error analysis is left as future research.

\section*{Acknowledgments}
This work is supported by the National Natural Science Foundation of China through grant 72071119.

\end{document}